\newtheorem{thm}{Theorem}[section]
\newtheorem{prop}[thm]{Proposition}
\newtheorem{cor}[thm]{Corollary}
\newtheorem{lem}[thm]{Lemma}
\newtheorem*{thmnn}{Theorem}
\theoremstyle{remark}
\newtheorem{rmk}[thm]{Remark}
\theoremstyle{definition}
\newtheorem{defn}[thm]{Definition}
\begin{document}

\title{Jacobi Forms of Indefinite Lattice Index}

\author{Shaul Zemel}

\maketitle

\section*{Introduction}

Jacobi forms play a significant role in the theory of modular forms. One place where they naturally show up is as the Fourier--Jacobi coefficients of Siegel modular forms. They were systematically studied, in the classical case (of integer index) in \cite{[EZ]}, where in particular they were shown to have theta decompositions. This essentially means that unary theta functions are the most basic Jacobi forms, and all the others can be constructed from them using modular forms of weight $\frac{1}{2}$ less. The theta decomposition thus establishes an isomorphism between holomorphic Jacobi forms and holomorphic vector-valued modular forms, involving the Weil representation dual to that of a positive definite lattice of rank 1. The latter are also known to be related to the Kohnen plus-spaces of modular forms from \cite{[K1]}, \cite{[K2]}, and others (see also \cite{[LZ]} for generalizations of this connection). Indeed, some initial special cases of what is now known as the Borcherds lift were stated in terms of Jacobi forms, and this connection lies underneath the translation to this interpretation (after adding copies of the lattice $II_{1,1}$).

\smallskip

The notion of Jacobi forms has been generalized in several directions. First, Fourier--Jacobi coefficients of Siegel modular forms of higher rank are Jacobi forms in which both variables are now taken from higher-dimensional varieties: $\tau$ from a Siegel upper half-plane of a larger degree, and $\zeta$ from a space of complex matrices. The paper \cite{[Zi]} considers some of the properties of the classical Jacobi forms, and shows how they extend to this more general setting. Closely related objects are Jacobi forms (with $\tau$ in the usual upper half-plane $\mathcal{H}$ again) in which the index is no longer an integer, but rather a positive definite lattice (see, e.g., \cite{[BK]}). These are the subject of several recent papers, including the Hecke theory developed in one thesis \cite{[A]}, and later in another, \cite{[Mo]}.

Another type of Jacobi forms is related to the fact that sometimes the Kohnen plus-space is not related to the space of modular forms with the required Weil representation, but rather with its dual. The space of Jacobi forms that was seen to be isomorphic to modular forms with these representations is the space of \emph{skew-holomorphic} modular forms, see, e.g., \cite{[Sk]}. They are no longer holomorphic in $\tau$ (though they are in $\zeta$), and even their functional equations are not holomorphic, as they contain the absolute value, or equivalently the complex conjugate, of the factor of automorphy. They are, however, annihilated by an appropriate differential operator. Note that as modules over scalar-valued modular forms, they involve conjugation of the variable. The extension of this notion to the positive definite lattice case appears in, e.g., \cite{[H]}.

\smallskip

One may therefore ask whether there are Jacobi forms that are isomorphic to vector-valued modular forms with (dual) Weil representations that are associated with lattices that are not necessarily positive definite. Note that while it is true that every discriminant form can be obtained from a positive definite lattice, we consider the lattice as part of the data. We answer this question to the affirmative, once the lattice is supplemented with the choice of an element of its Grassmannian. The construction is based on a Jacobi version of the Siegel theta function defined in, e.g., Section 4 of \cite{[Bor]}, and indeed, once the correct theta function is defined, the proofs work like in the classical setting. Let $L_{\mathbb{R}}$ and $L_{\mathbb{C}}$ denote the real and complex spaces $L\otimes\mathbb{R}$ and $L\otimes\mathbb{C}$ respectively, let $\Theta_{L}$ be the theta function defined in Equation \eqref{JacTheta}, and let $\Delta_{v_{-}}^{h}$ stand for the holomorphic complexification, defined before Proposition \ref{difeq}, of the Laplacian associated the negative definite space $v_{-} \subseteq L_{\mathbb{R}}$ associated with the Grassmannian element $v$. The main result, which appears as Theorem \ref{main} and Proposition \ref{pshol}, is as follows.
\begin{thmnn}
Let $L$ be an even lattice of signature $(b_{+},b_{-})$, take $v$ in the Grassmannian of $L_{\mathbb{R}}$, and let $\Theta_{L}(\tau,\zeta;v)$ denote the value at $\tau\in\mathcal{H}$ and $\zeta \in L_{\mathbb{C}}$ of the Jacobi--Siegel theta function of $L$ and $v$. Then the map taking a modular form $F$ with representation $\rho_{L}^{*}$ to its pairing with $\Theta_{L}$ defines an isomorphism between Jacobi forms of weight $(k,l)$ and index $(L,v)$ and modular forms $\big(k-\frac{b_{+}}{2},l-\frac{b_{-}}{2}\big)$ and representation $\rho_{L}^{*}$. Moreover, $F$ is weakly holomorphic if and only if its associated Jacobi form is annihilated by the operator $4\pi i\partial_{\overline{\tau}}-\Delta_{v_{-}}^{h}$, where $\Delta_{v_{-}}^{h}$ operates on the variable $\zeta$.
\end{thmnn}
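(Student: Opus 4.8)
The plan is to mimic the classical theta decomposition of Eichler--Zagier, with the Jacobi--Siegel theta function $\Theta_{L}(\tau,\zeta;v)$ playing the role of the elliptic unary theta functions. The first task is to record the behaviour of $\Theta_{L}$ itself: applying Poisson summation to the positive-definite directions of $v$ (the negative-definite directions being absorbed into the Gaussian in $\overline{\tau}$), one shows that $\Theta_{L}$ is a non-holomorphic Jacobi form of weight $\big(\tfrac{b_{+}}{2},\tfrac{b_{-}}{2}\big)$ and index $(L,v)$, taking values in $\mathbb{C}[L^{*}/L]$ and transforming under the metaplectic group via the Weil representation $\rho_{L}$; this is the Jacobi analogue of the transformation formula in Section~4 of \cite{[Bor]}. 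In addition one checks that each component satisfies a heat-type identity $4\pi i\,\partial_{\overline{\tau}}\Theta_{L,\gamma}=\Delta_{v_{-}}^{h}\Theta_{L,\gamma}$, reflecting the fact that all of the non-holomorphy in $\tau$ is carried by the exponential factor in the $v_{-}$-variables. Granting these two facts, the forward direction is immediate: if $F=(F_{\gamma})_{\gamma}$ is a modular form of weight $\big(k-\tfrac{b_{+}}{2},\,l-\tfrac{b_{-}}{2}\big)$ with representation $\rho_{L}^{*}$, then $\phi=\langle F,\Theta_{L}\rangle$ has weight $(k,l)$ and index $(L,v)$, since the weights add up and the pairing of $\rho_{L}^{*}$ against $\rho_{L}$ is metaplectically invariant; injectivity of $F\mapsto\phi$ follows from the linear independence of the components $\Theta_{L,\gamma}(\tau,\cdot;v)$ as functions of $\zeta$, which comes from their disjoint Fourier supports along the cosets $\gamma+L\subseteq L^{*}$.

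For surjectivity one shows that every Jacobi form $\phi$ of index $(L,v)$ arises this way. The elliptic (Heisenberg) part of the index condition forces $\phi(\tau,\cdot)$, for each fixed $\tau$, to lie in the finite-dimensional space of functions of $\zeta$ with the prescribed quasi-periodicity under translations by $L$; this space is spanned by $\{\Theta_{L,\gamma}(\tau,\cdot;v)\}_{\gamma\in L^{*}/L}$, so one writes $\phi(\tau,\zeta)=\sum_{\gamma}h_{\gamma}(\tau)\,\Theta_{L,\gamma}(\tau,\zeta;v)$ with uniquely determined coefficients $h_{\gamma}$. Substituting this expansion into the $SL_{2}$-transformation law of $\phi$ and cancelling the (now known) transformation law of $\Theta_{L}$ shows that $F=(h_{\gamma})_{\gamma}$ transforms with weight $\big(k-\tfrac{b_{+}}{2},\,l-\tfrac{b_{-}}{2}\big)$ and with the contragredient representation $\rho_{L}^{*}$; matching Fourier expansions at the cusp transfers the growth condition, so $F$ is a genuine modular form. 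These two paragraphs give the isomorphism of Theorem~\ref{main}. For Proposition~\ref{pshol}, apply $4\pi i\,\partial_{\overline{\tau}}-\Delta_{v_{-}}^{h}$ to the decomposition $\phi=\sum_{\gamma}h_{\gamma}\,\Theta_{L,\gamma}$: by the heat identity for $\Theta_{L}$, every term in which $\partial_{\overline{\tau}}$ hits $\Theta_{L,\gamma}$ cancels against $\Delta_{v_{-}}^{h}$, leaving $\sum_{\gamma}\big(4\pi i\,\partial_{\overline{\tau}}h_{\gamma}\big)\Theta_{L,\gamma}$. By the same linear independence of the $\Theta_{L,\gamma}$, this vanishes precisely when $\partial_{\overline{\tau}}h_{\gamma}\equiv0$ for all $\gamma$, i.e.\ exactly when $F$ is holomorphic.

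I expect the main obstacle to be the first step: pinning down the precise transformation formula of the Jacobi--Siegel theta function under the full metaplectic group, together with the exact normalisation of the heat identity it satisfies. In the indefinite setting Poisson summation must be applied only to the $v_{+}$-directions while the $v_{-}$-directions are handled by the Gaussian in $\overline{\tau}$, and one must verify that the extra $\zeta$-dependence (with $\zeta\in\mathcal{H}$) transforms with the correct index factor and interacts compatibly with the orthogonal splitting $L_{\mathbb{R}}=v_{+}\oplus v_{-}$; one also has to confirm that the Heisenberg representation governing $\zeta$ still has dimension $|L^{*}/L|$, which is what makes the spanning statement used in the surjectivity argument valid. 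Once these structural facts are in place, the remainder runs parallel to the classical case \cite{[EZ]} and to Borcherds's theta machinery, so no further serious difficulty should arise.
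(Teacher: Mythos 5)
Your proposal follows essentially the same route as the paper: it establishes the quasi-periodicity, metaplectic transformation law (via Poisson summation on the generators, as in Theorem 4.1 of \cite{[Bor]}), and heat-type identity for the Jacobi--Siegel theta function, then obtains the theta decomposition of an arbitrary Jacobi form from its Fourier expansion in $\zeta$ (the coefficients depending only on cosets of $L$ in $L^{*}$ by the quasi-periodicity), transfers modularity between $F$ and $\Phi$ using the duality of $\rho_{L}$ and $\rho_{L}^{*}$, and characterizes pseudo-holomorphicity by letting $4\pi i\partial_{\overline{\tau}}-\Delta_{v_{-}}^{h}$ act on the pairing so that only the term $4\pi i\langle\Theta_{L},\partial_{\overline{\tau}}F\rangle$ survives. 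This is exactly the argument of Propositions \ref{perTheta}, \ref{difeq}, \ref{pshol}, Theorem \ref{modTheta}, Lemma \ref{Fourinv}, Corollary \ref{decomtheta}, and Theorem \ref{main}, so no substantive comparison is needed.
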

We remark that all of our Jacobi forms are holomorphic in $\zeta$, and the fact that $v$ is included in the index affects their functional equations (both the modular and the ``periodic'' ones). Our examples are therefore different from those from \cite{[CWR]} and \cite{[WR]}, that are also isomorphic to the same vector-valued modular forms (in particular, the theta functions from the latter reference have singularities, while our theta functions do not). Moreover, the operator $\Delta_{v_{-}}^{h}$ disappears when $L$ is positive definite, so that the differential equation is equivalent to holomorphicity in this case. The precise definition is given in Definition \ref{Jacdef}. Note that we allow linear exponential growth of our modular forms at the cusp, so that the appropriate analogue might be some version of \emph{weak} Jacobi forms, but altering the growth conditions would produce analogues of the usual Jacobi forms in the same manner. In addition, the dependence of our Jacobi forms on the Grassmannian variable $v$ may lead to interesting results.

Note that the skew-holomorphic Jacobi forms of \cite{[Sk]} and \cite{[H]} are not covered in this setting, in particular because the differential operator defining them involves $\partial_{\tau}$ rather than $\partial_{\overline{\tau}}$. However, the vector-valued version of the conjugation of the variable combines with our result to generalize these Jacobi forms as well (see Proposition \ref{skewhol}).

\smallskip

We then establish the behavior of several operations on vector-valued modular forms and Jacobi forms, and relate them via this connection: Direct sums, arrow operators to and from a sub-lattice of finite index, partial substitution of 0 (which is related to an operation called theta contraction, introduced in \cite{[Ma]} and generalized in \cite{[Ze3]}), and products of Jacobi forms. Note that the latter action, which was very natural when the indices were integers, becomes more complicated to define when the index is a lattice, and even more delicate when this lattice is indefinite. On the level of the vector-valued modular forms it produces interesting maps, which are related to a special case of theta contraction.

We remark that the relation involving one of the arrow operators cannot be phrased in terms of our theta functions and Jacobi forms alone. We thus make use of the ``generalized modularity'' of the theta functions from Theorem 4.1 of \cite{[Bor]} for establishing an analogue of theta functions with characteristics (see, e.g., Section 1.3 of \cite{[FZ]} for their definition in general), and an appropriate combination of those gives the answer for the missing arrow operator.

\smallskip

This paper is divided into 4 sections. Section \ref{STheta} defines the Jacobi--Siegel theta functions that we work with, and proves some of its properties. Section \ref{JacForms} contains the definition of our Jacobi forms, and establishes the main correspondence. Section \ref{Oper} investigates the behavior of the operations on both types of objects, and Section \ref{Char} considers the theta functions with characteristics.

\smallskip

I would like to thank B. Williams for an interesting discussion, mainly around Theorem \ref{MFprodJac}, as well as to M. Raum for referring me to \cite{[WR]} and \cite{[CWR]}. I am also very grateful to the two referees, for many valuable suggestions that improved the presentation of this paper.

\section{Jacobi--Siegel Theta Functions \label{STheta}}

A \emph{lattice} is a free $\mathbb{Z}$-module $L$ of finite rank, with a non-degenerate bilinear map taking $\lambda$ and $\mu$ in $L$ to their pairing $(\lambda,\mu)\in\mathbb{Z}$. The \emph{signature} of $L$ is the
signature $(b_{+},b_{-})$ of the corresponding real quadratic space $L_{\mathbb{R}}$. We say that $L$ is \emph{even} if the pairing $\lambda^{2}:=(\lambda,\lambda)$ is even for every $\lambda \in L$, or equivalently if the associated quadratic form $\lambda\mapsto\frac{\lambda^{2}}{2}$ is $\mathbb{Z}$-valued on $L$.

The pairing produces a canonical isomorphism \[L^{*}:=\operatorname{Hom}(L,\mathbb{Z})\cong\{\lambda \in L_{\mathbb{R}}|(\lambda,L)\subseteq\mathbb{Z}\} \subseteq L_{\mathbb{R}},\] so that the \emph{dual lattice} $L^{*}$ contains $L$. The quotient $D_{L}:=L^{*}/L$ is a finite group, called the \emph{discriminant group} of $L$. It carries the non-degenerate bilinear and quadratic forms with values in $\mathbb{Q}/\mathbb{Z}$ arising from those of $L$, and we denote them by the same symbols for those on $L$.

\smallskip

Theta functions of positive definite lattices are functions of the variable $\tau$ in the \emph{upper half-plane} $\mathcal{H}:=\{\tau=x+iy\in\mathbb{C}|y>0\}$. If $L$ is indefinite (i.e., when $b_{+}b_{-}>0$), then the theta function depends on another parameter, coming from the \emph{Grassmannian} of $L_{\mathbb{R}}$. This is the real (connected) manifold of dimension $b_{+}b_{-}$ that is defined by
\begin{equation}
\operatorname{Gr}(L_{\mathbb{R}}):=\big\{L_{\mathbb{R}}=v_{+} \oplus v_{-}\big|v_{+}\gg0,\ v_{-}\ll0,\ v_{+} \perp v_{-}\big\}, \label{Grassdef}
\end{equation}
namely the set of decompositions of $L_{\mathbb{R}}$ as the orthogonal direct sum of a positive definite subspace with a negative definite one. It is clear that for every element $v\in\operatorname{Gr}(L_{\mathbb{R}})$ we have $\dim v_{\pm}=b_{\pm}$, and that each $v_{\pm}$ determines the other vector space as $v_{\mp}=v_{\pm}^{\perp}$. It is not hard to verify that the Lie group $\operatorname{O}(L_{\mathbb{R}})$, as well as its connected component $\operatorname{SO}^{+}(L_{\mathbb{R}})$, act transitively on the Grassmannian $\operatorname{Gr}(L_{\mathbb{R}})$ from Equation \eqref{Grassdef} with maximal compact stabilizers. The latter is thus the \emph{symmetric space} of these groups. The former group contains the group $\operatorname{O}(L)$ of automorphisms of $L$ as a discrete subgroup, and we denote the intersection $\operatorname{O}(L)\cap\operatorname{SO}^{+}(L_{\mathbb{R}})$ by $\operatorname{SO}^{+}(L)$. A natural discrete group associated with $L$ is its \emph{discriminant kernel}, or \emph{stable orthogonal group}, which is defined by
\begin{equation}
\Gamma_{L}:=\ker\big(\operatorname{SO}^{+}(L)\to\operatorname{O}(D_{L})\big)=\big\{\mathcal{A}\in\operatorname{SO}^{+}(L)\big|\mathcal{A}\lambda-\lambda \in L\ \forall\lambda \in L^{*}\big\}. \label{GammaL}
\end{equation}
Finally, given a vector $\lambda \in L_{\mathbb{R}}$, we shall denote its orthogonal projection onto the spaces $v_{\pm}$ associated with an element $v\in\operatorname{Gr}(L_{\mathbb{R}})$ by $\lambda_{v_{\pm}}$.

\smallskip

The upper half-plane $\mathcal{H}$ carries a natural action of the group $\operatorname{SL}_{2}(\mathbb{R})$, in which the element $A=\binom{a\ \ b}{c\ \ d}$ operates as $\tau \mapsto A\tau:=\frac{a\tau+b}{c\tau+d}$. The denominator $j(A,\tau):=c\tau+d$ is called the \emph{factor of automorphy} for this action, and it satisfies the \emph{cocycle condition}
\begin{equation}
j(AB,\tau)=j(A,B\tau)j(B,\tau). \label{cocycle}
\end{equation}
This allows one to have an explicit description of the unique non-trivial double cover of that group: It is denoted by
\begin{equation}
\operatorname{Mp}_{2}(\mathbb{R}):=\big\{(A,\phi)\big|A\in\operatorname{SL}_{2}(\mathbb{R}),\ \phi:\mathcal{H}\to\mathbb{C}\text{ holomorphic},\ \phi^{2}(\tau)=j(A,\tau)\big\}, \label{Mp2R}
\end{equation}
and called the \emph{(real) metaplectic group}, in which the product rule is \[(A,\phi)\cdot(B,\psi):=\big(AB,(\phi \circ B)\cdot\psi\big).\] It is used for the natural definitions in the theory of modular forms of half-integral weight. The uniqueness of $\operatorname{Mp}_{2}(\mathbb{R})$ follows from the fact that topologically $\operatorname{SL}_{2}(\mathbb{R})$ is the product of $\mathcal{H}$ with the stabilizer $\operatorname{SO}(2)$, implying that its fundamental group is infinite cyclic and thus has a unique subgroup of index 2.

The \emph{integral metaplectic group} is the inverse image $\operatorname{Mp}_{2}(\mathbb{Z})$ of $\operatorname{SL}_{2}(\mathbb{Z})$ inside $\operatorname{Mp}_{2}(\mathbb{R})$. It contains the three special elements \[T:=\bigg(\binom{1\ \ 1}{0\ \ 1},1\bigg),\qquad S:=\bigg(\binom{0\ \ -1}{1\ \ \ 0\ },\sqrt{\tau}\in\mathcal{H}\bigg)\qquad\text{and}\qquad Z:=(-I,i),\] which generate it modulo the relations $S^{2}=(ST)^{3}=Z$ and the triviality of $Z^{4}$. To the discriminant group $D_{L}$ (or directly to the lattice $L$) one associates a \emph{Weil representation} $\rho_{L}$ of $\operatorname{Mp}_{2}(\mathbb{Z})$ on the space underlying the group ring $\mathbb{C}[D_{L}]$, which becomes unitary if we endow that space with the inner product in which the natural basis, which we denote by $\{\mathfrak{e}_{\gamma}\}_{\gamma \in D_{L}}$, is orthonormal. The explicit formulae for the action of the generators are
\begin{equation}
\rho_{L}(T)\mathfrak{e}_{\gamma}=\mathbf{e}\big(\tfrac{\gamma^{2}}{2}\big)\mathfrak{e}_{\gamma},\qquad\rho_{L}(S)\mathfrak{e}_{\gamma}=\frac{\mathbf{e}\big(\frac{b_{-}-b_{+}}{8}\big)}{\sqrt{|D_{L}|}}\sum_{\delta \in D_{L}}\mathbf{e}\big(-(\gamma,\delta)\big)\mathfrak{e}_{\delta}, \label{Weildef}
\end{equation}
and $\rho_{L}(Z)\mathfrak{e}_{\gamma}=i^{b_{-}-b_{+}}\mathfrak{e}_{-\gamma}$, where we write $\mathbf{e}(w)$ for $e^{2\pi iw}$ for every $w$ in $\mathbb{C}$ or in $\mathbb{C}/\mathbb{Z}$. For the explicit expression for $\rho_{L}(A,\phi)$ for every $(A,\phi)\in\operatorname{Mp}_{2}(\mathbb{Z})$, consult \cite{[Sch]}, \cite{[Str]}, or \cite{[Ze1]}.

\smallskip

The \emph{Siegel theta function} associated with $L$ is the function taking the variables $\tau\in\mathcal{H}$ and $v\in\operatorname{Gr}(L_{\mathbb{R}})$ to
\begin{equation}
\Theta_{L}(\tau;v):=\sum_{\lambda \in L^{*}}\mathbf{e}\bigg(\tau\frac{\lambda_{v_{+}}^{2}}{2}+\overline{\tau}\frac{\lambda_{v_{-}}^{2}}{2}\bigg)\mathfrak{e}_{\lambda+L}=\sum_{\gamma \in D_{L}}\Bigg[\sum_{\lambda \in L+\gamma}\mathbf{e}\bigg(\tau\frac{\lambda_{v_{+}}^{2}}{2}+\overline{\tau}\frac{\lambda_{v_{-}}^{2}}{2}\bigg)\Bigg]\mathfrak{e}_{\gamma}, \label{Thetadef}
\end{equation}
where the scalar-valued function multiplying $\mathfrak{e}_{\gamma}$ in Equation \eqref{Thetadef} is denoted by $\theta_{L+\gamma}(\tau;v)$. For the well-known convergence of the series from Equation \eqref{Thetadef}, as well as that from Equation \eqref{JacTheta} below, recall that the \emph{majorant} associated with $v$ is the positive definite bilinear form on $L_{\mathbb{R}}$ with respect to which the pairing of $\lambda \in L_{\mathbb{R}}$ with itself is $\lambda_{v_{+}}^{2}-\lambda_{v_{-}}^{2}$. Then the absolute value of the term associated with $\lambda$ has absolute value $\exp\big(-\pi y(\lambda_{v_{+}}^{2}-\lambda_{v_{-}}^{2})\big)$, and the discreteness of costes of $L$ inside $L_{\mathbb{R}}$ makes the convergence clear. Theorem 4.1 of \cite{[Bor]} states that the function $\tau\mapsto\Theta_{L}(\tau;v)$, with $v$ fixed, is a (typically non-holomorphic) modular form of weight $\big(\frac{b_{+}}{2},\frac{b_{-}}{2}\big)$ and representation $\rho_{L}$ (see Equation \eqref{modeq} below for the precise definition). It is also clear that for fixed $\tau$, the value of $\Theta_{L}(\tau;v)$ remains invariant under the action of the group $\Gamma_{L}$ from Equation \eqref{GammaL} on the variable $v$.

Jacobi forms involve another variable $\zeta$, where in case the index is the lattice $L$ it is taken from $L_{\mathbb{C}}$. We recall that our extension of the pairing on $L_{\mathbb{C}}$ is bilinear, and the quadratic form extends as well (neither involve complex conjugations). Adapting the definitions from the positive definite lattice index case from, e.g., Definition 3.32 of \cite{[Boy]} or Definition 2.3.2 of \cite{[A]} (among others), we define the \emph{Jacobi--Siegel theta function} that is associated with $L$ by
\begin{equation}
\Theta_{L}(\tau,\zeta;v):=\sum_{\gamma \in D_{L}}\Bigg[\sum_{\lambda \in L+\gamma}\mathbf{e}\bigg(\tau\frac{\lambda_{v_{+}}^{2}}{2}+\overline{\tau}\frac{\lambda_{v_{-}}^{2}}{2}+(\lambda,\zeta)\bigg)\Bigg]\mathfrak{e}_{\gamma}, \label{JacTheta}
\end{equation}
where the coefficient in front of $\mathfrak{e}_{\gamma}$ in Equation \eqref{JacTheta} is denoted by $\theta_{L+\gamma}(\tau,\zeta;v)$. It satisfies the following ``periodicity'' property in $\zeta$, generalizing the one given in Equation (2.32) of \cite{[A]}.
\begin{prop}
Consider $\tau\in\mathcal{H}$ and $v\in\operatorname{Gr}(L_{\mathbb{R}})$ as fixed, and take two elements $\sigma$ and $\nu$ from $L$. Then the Jacobi--Siegel theta function from Equation \eqref{JacTheta} satisfies, as a function of $\zeta \in L_{\mathbb{C}}$, the equality \[\Theta_{L}(\tau,\zeta+\tau\sigma_{v_{+}}+\overline{\tau}\sigma_{v_{-}}+\nu;v)=\mathbf{e}\bigg(-\tau\frac{\sigma_{v_{+}}^{2}}{2}-\overline{\tau}\frac{\sigma_{v_{-}}^{2}}{2}-(\sigma,\zeta)\bigg)\Theta_{L}(\tau,\zeta;v).\] \label{perTheta}
\end{prop}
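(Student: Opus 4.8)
The plan is to substitute the shifted argument $\zeta+\tau\sigma_{v_{+}}+\overline{\tau}\sigma_{v_{-}}+\nu$ directly into the defining series \eqref{JacTheta} and then to reorganize the resulting exponent by completing the square separately in the two Gaussian pieces $\tau\frac{\lambda_{v_{+}}^{2}}{2}$ and $\overline{\tau}\frac{\lambda_{v_{-}}^{2}}{2}$. Fixing $\gamma\in D_{L}$ and a summation vector $\lambda\in L+\gamma$, the decomposition $\lambda=\lambda_{v_{+}}+\lambda_{v_{-}}$, bilinearity of the pairing, and the orthogonality $v_{+}\perp v_{-}$ together give $(\lambda,\tau\sigma_{v_{+}}+\overline{\tau}\sigma_{v_{-}}+\nu)=\tau(\lambda_{v_{+}},\sigma_{v_{+}})+\overline{\tau}(\lambda_{v_{-}},\sigma_{v_{-}})+(\lambda,\nu)$. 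Since $\lambda\in L+\gamma\subseteq L^{*}$ and $\nu\in L$, the number $(\lambda,\nu)$ is an integer, so that $\mathbf{e}\big((\lambda,\nu)\big)=1$ and $\nu$ disappears entirely; this is the only place the integrality of $\nu$ is needed.

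Next I would absorb the two new cross terms into the Gaussian. Using the linearity of the projections, $(\lambda+\sigma)_{v_{\pm}}=\lambda_{v_{\pm}}+\sigma_{v_{\pm}}$, one gets $\tau\frac{\lambda_{v_{+}}^{2}}{2}+\tau(\lambda_{v_{+}},\sigma_{v_{+}})=\tau\frac{(\lambda+\sigma)_{v_{+}}^{2}}{2}-\tau\frac{\sigma_{v_{+}}^{2}}{2}$ and, symmetrically with $\overline{\tau}$, $\overline{\tau}\frac{\lambda_{v_{-}}^{2}}{2}+\overline{\tau}(\lambda_{v_{-}},\sigma_{v_{-}})=\overline{\tau}\frac{(\lambda+\sigma)_{v_{-}}^{2}}{2}-\overline{\tau}\frac{\sigma_{v_{-}}^{2}}{2}$; and of course $(\lambda,\zeta)=(\lambda+\sigma,\zeta)-(\sigma,\zeta)$. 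Collecting, the $\lambda$-summand of the shifted series is $\mathbf{e}\big(-\tau\frac{\sigma_{v_{+}}^{2}}{2}-\overline{\tau}\frac{\sigma_{v_{-}}^{2}}{2}-(\sigma,\zeta)\big)$ times the $(\lambda+\sigma)$-summand of the original series defining $\theta_{L+\gamma}(\tau,\zeta;v)$.

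Finally I would re-index the inner sum: since $\sigma\in L$, the translation $\lambda\mapsto\lambda+\sigma$ is a bijection of the coset $L+\gamma$ onto itself, so that summing over $\lambda\in L+\gamma$ simply reproduces $\theta_{L+\gamma}(\tau,\zeta;v)$, multiplied by the prefactor above, which is independent of $\gamma$. Reassembling the components attached to the $\mathfrak{e}_{\gamma}$ yields the asserted identity for $\Theta_{L}$.

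The algebra is routine; the one point that genuinely needs care is the legitimacy of the re-indexing, i.e., the absolute and locally uniform convergence of the series \eqref{JacTheta}. Here one checks that the absolute value of each $\gamma$-component summand equals $e^{-\pi y(\lambda_{v_{+}}^{2}-\lambda_{v_{-}}^{2})}\cdot e^{-2\pi\operatorname{Im}(\lambda,\zeta)}$, where $\lambda_{v_{+}}^{2}-\lambda_{v_{-}}^{2}$ is the positive definite majorant attached to $v$ (as around Section 4 of \cite{[Bor]}) while the exponent of the second factor grows at most linearly in $\lambda$; hence the series converges like a Gaussian, uniformly for $\tau$, $v$, and $\zeta$ in compact sets. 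I expect this convergence bookkeeping, rather than the completion of the square, to be the part demanding the most attention, though it is entirely standard for Siegel-type theta functions and may be quoted from the references cited around \eqref{Thetadef}.
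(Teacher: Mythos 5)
Your argument is correct and is essentially the paper's own proof: expand the exponent under the shift, use orthogonality of the projections and integrality of $(\lambda,\nu)$, complete the square to pass from $\lambda$ to $\lambda+\sigma$, and re-index the sum over the coset $L+\gamma$. The added remark on absolute convergence via the positive definite majorant is a fine (standard) justification for the re-indexing, which the paper leaves implicit.
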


\begin{proof}
Adding $\tau\sigma_{v_{+}}+\overline{\tau}\sigma_{v_{-}}+\nu$ to $\zeta$ in the summand associated with $\lambda$ in Equation \eqref{JacTheta} multiplies it by $\mathbf{e}\big(\tau(\lambda_{v_{+}},\sigma_{v_{+}})+\overline{\tau}(\lambda_{v_{-}},\sigma_{v_{-}})\big)$ (because the projections are orthogonal) as well as by $\mathbf{e}\big((\lambda,\nu)\big)=1$ (since $\nu \in L$ and $\lambda \in L^{*}$). Then the multipliers of $\tau$ and of $\overline{\tau}$ in the exponent are $\frac{\lambda_{v_{\pm}}^{2}}{2}+(\lambda_{v_{\pm}},\sigma_{v_{\pm}})=\frac{(\lambda+\sigma)_{v_{\pm}}^{2}}{2}-\frac{\sigma_{v_{\pm}}^{2}}{2}$, and we write $(\lambda,\zeta)$ as $(\lambda+\sigma,\zeta)-(\sigma,\zeta)$. This gives the desired multiplier, and since adding $\sigma \in L$ to the summation index $\lambda$ leaves the set $L+\gamma$ invariant for every $\gamma \in D_{L}$, we indeed obtain the original function from Equation \eqref{JacTheta}. This proves the proposition.
\end{proof}

We fix the Haar measure on $L_{\mathbb{R}}$ to be product of the usual Lebesgue measures in the coordinates with respect to an orthogonal basis with norms $\pm1$. Then, given a Schwartz function $f$ on $L_{\mathbb{R}}$, our normalization of its Fourier transform is defined to be $\hat{f}(\mu)=\int_{L_{\mathbb{R}}}f(\lambda)\mathbf{e}\big((\lambda,\mu)\big)d\lambda$. Then the \emph{Poisson Summation Formula}, as cited, for example, in the proof of Theorem 4.1 of \cite{[Bor]}, states that $\sqrt{|D_{L}|}\sum_{\lambda \in L}f(\lambda)=\sum_{\mu \in L^{*}}\hat{f}(\mu)$. Moreover, if $f(\lambda)=g(\lambda+\xi)$ for some element $\xi \in L_{\mathbb{R}}$, then we have $\hat{f}(\mu)=\hat{g}(\mu)\mathbf{e}\big(-(\mu,\xi)\big)$ as in, e.g., part 1 of Lemma 3.1 of \cite{[Bor]}, while part 2 of that lemma implies that if $g(\lambda)=h(\lambda)\mathbf{e}\big((\lambda,\eta)\big)$ for another element $\eta \in L_{\mathbb{R}}$ then $\hat{g}(\mu)=\hat{h}(\mu+\eta)$ (note that we use the opposite normalization to that of \cite{[Bor]}). This produces the more general form
\begin{equation}
\sum_{\lambda \in L}h(\lambda+\xi)\mathbf{e}\big((\lambda+\xi,\eta)\big)=\frac{1}{\sqrt{|D_{L}|}}\sum_{\mu \in L^{*}}\mathbf{e}\big(-(\mu,\xi)\big)\hat{h}(\mu+\eta), \label{PSF}
\end{equation}
where both sides are clearly determined by the image of $\xi$ in $L_{\mathbb{R}}/L$. We shall apply Equation \eqref{PSF} only to the function $h(\lambda)=\mathbf{e}\Big(-\frac{\lambda_{v_{+}}^{2}}{2\tau}-\frac{\lambda_{v_{-}}^{2}}{2\overline{\tau}}\Big)$ for fixed $\tau\in\mathcal{H}$ and $v\in\operatorname{Gr}(L_{\mathbb{R}})$, with varying values of $\xi$, $\eta$, and $\gamma$. Corollary 3.5 of \cite{[Bor]} shows (among earlier, simpler results) that its Fourier transform is given by $\hat{h}(\nu)=\mathbf{e}\big(\frac{b_{-}-b_{+}}{8}\big)\tau^{b_{+}/2}\overline{\tau}^{b_{-}/2}\mathbf{e}\Big(\tau\frac{\nu_{v_{+}}^{2}}{2}+\overline{\tau}\frac{\nu_{v_{-}}^{2}}{2}\Big)$.

Following the proof of Theorem 4.1 of \cite{[Bor]} or Theorem 3.1 on page 82 of \cite{[Boy]}, we also establish the following result, in which we recall that the derivative $j_{A}'$ of the function $j_{A}(\tau):=j(A,\tau)$ is a (real) constant, which is the lower left entry of the matrix $A$.
\begin{thm}
For every $(A,\phi)\in\operatorname{Mp}_{2}(\mathbb{Z})$ we have the equality \[\Theta_{L}\!\Big(A\tau,\tfrac{\zeta_{v_{+}}}{j(A,\tau)}+\tfrac{\zeta_{v_{-}}}{\overline{j(A,\tau)}};v\Big)\!=\! \phi(\tau)^{b_{+}}\overline{\phi(\tau)}^{b_{-}}\!\mathbf{e}\Big(\tfrac{j_{A}'\zeta_{v_{+}}^{2}}{2j(A,\tau)}+\tfrac{j_{A}'\zeta_{v_{-}}^{2}}{2\overline{j(A,\tau)}}\Big)\rho_{L}(A,\phi)\Theta_{L}(\tau,\zeta;v).\] \label{modTheta}
\end{thm}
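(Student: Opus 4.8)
The plan is to verify the identity on the generators $S$ and $T$ of $\operatorname{Mp}_{2}(\mathbb{Z})$ (recall $Z=S^{2}$), having first checked that both sides transform consistently under composition, so that the collection of $(A,\phi)$ for which the identity holds is closed under multiplication. On the right-hand side this consistency is purely formal: the factor $\phi(\tau)^{b_{+}}\overline{\phi(\tau)}^{b_{-}}$ is compatible with the metaplectic product rule, $\rho_{L}$ is a representation, the substituted arguments $\tfrac{\zeta_{v_{+}}}{j(A,\tau)}$ and $\tfrac{\zeta_{v_{-}}}{\overline{j(A,\tau)}}$ compose by the cocycle relation \eqref{cocycle} (orthogonal projection onto $v_{\pm}$ commutes with multiplication by the scalar $j(A,\tau)$), and the exponential prefactor composes because logarithmic differentiation of \eqref{cocycle} gives $\tfrac{j_{AB}'}{j(AB,\tau)}=\tfrac{j_{A}'}{j(A,B\tau)j(B,\tau)^{2}}+\tfrac{j_{B}'}{j(B,\tau)}$, which is exactly what appears after substituting $\zeta\mapsto\tfrac{\zeta_{v_{+}}}{j(B,\tau)}+\tfrac{\zeta_{v_{-}}}{\overline{j(B,\tau)}}$ into the $(A,\phi)$-prefactor. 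The left-hand side composes for the same reasons, the substitutions on $(\tau,\zeta)$ being associative via \eqref{cocycle}.

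For $T$ we have $A\tau=\tau+1$, $j(T,\tau)=1$, $\phi\equiv1$ and $j_{T}'=0$, so the claim reduces to $\Theta_{L}(\tau+1,\zeta;v)=\rho_{L}(T)\Theta_{L}(\tau,\zeta;v)$: replacing $\tau$ by $\tau+1$ in \eqref{JacTheta} multiplies the summand indexed by $\lambda$ by $\mathbf{e}\big(\tfrac{\lambda_{v_{+}}^{2}+\lambda_{v_{-}}^{2}}{2}\big)=\mathbf{e}\big(\tfrac{\lambda^{2}}{2}\big)$ by orthogonality of the projections, and for $\lambda\in L+\gamma$ the evenness of $L$ gives $\tfrac{\lambda^{2}}{2}\equiv\tfrac{\gamma^{2}}{2}\pmod 1$, which matches $\rho_{L}(T)$ in \eqref{Weildef}.

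The substantive case is $S$, where $A\tau=-\tfrac1\tau$, $j(S,\tau)=\tau$, $\phi(\tau)=\sqrt\tau$ and $j_{S}'=1$. Following the proof of Theorem 4.1 of \cite{[Bor]} (or of Theorem 3.1 of \cite{[Boy]}), I would apply the Poisson summation formula to each coset sum $\theta_{L+\gamma}$. For fixed $\tau$, $v$ and $\zeta$, the summand of \eqref{JacTheta} evaluated at $-\tfrac1\tau$ extends to a Schwartz function on $L_{\mathbb{R}}=v_{+}\oplus v_{-}$ — a genuinely decaying Gaussian, since $\operatorname{Im}(-\tfrac1\tau)>0$ while $\lambda_{v_{+}}^{2}>0$ and $\lambda_{v_{-}}^{2}<0$ — namely a Gaussian of complex width $-\tfrac1\tau$ on $v_{+}$ times one of width $-\tfrac1{\overline\tau}$ on $v_{-}$, each twisted by the linear character determined by $\zeta_{v_{\pm}}$. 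Completing the square in $\lambda_{v_{\pm}}$ pulls out exactly the prefactor $\mathbf{e}\big(\tfrac{\zeta_{v_{+}}^{2}}{2\tau}+\tfrac{\zeta_{v_{-}}^{2}}{2\overline\tau}\big)$, and the Gaussian Fourier transform contributes the constant $(\sqrt\tau)^{b_{+}}(\overline{\sqrt\tau})^{b_{-}}\mathbf{e}\big(\tfrac{b_{-}-b_{+}}{8}\big)$: the $v_{+}$-directions give $(\sqrt\tau)^{b_{+}}\mathbf{e}\big(-\tfrac{b_{+}}{8}\big)$, and the negative definite $v_{-}$-directions, where the quadratic form must be rewritten as $-|\cdot|^{2}$, give the complex-conjugate factor $(\overline{\sqrt\tau})^{b_{-}}\mathbf{e}\big(\tfrac{b_{-}}{8}\big)$. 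Summing the transforms over $L^{*}$ with covolume $\sqrt{|D_{L}|}$ and regrouping by cosets of $D_{L}$, a relabeling $\mu\mapsto-\mu$ (equivalently $\gamma\mapsto-\gamma$) turns the result into $\rho_{L}(S)\Theta_{L}(\tau,\zeta;v)$ in the normalization of \eqref{Weildef}. I expect this $S$-computation to be the main obstacle: keeping the branch of $\sqrt\tau$ consistent with the one fixed after \eqref{Mp2R}, handling the negative definite part on $v_{-}$ correctly (the Gaussian's quadratic form and the pairing each acquire a sign), and matching the output of Poisson summation — including the signs of the $\zeta$-linear terms and of the discriminant-group pairing in the exponent — against the precise normalization of $\rho_{L}(S)$. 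Once $S$ and $T$ are settled, the consistency under composition extends the identity from these generators to all of $\operatorname{Mp}_{2}(\mathbb{Z})$.
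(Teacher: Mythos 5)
Your proposal is correct and follows essentially the same route as the paper: reduction to the generators via the cocycle relation \eqref{cocycle} and its logarithmic derivative, the direct computation for $T$, and Poisson summation after completing the square for $S$. The only cosmetic difference is that the paper first carries out the $S$-computation for $\zeta\in L_{\mathbb{R}}$ (so the completed square is a genuine real translate) and then extends to $\zeta\in L_{\mathbb{C}}$ by holomorphicity in $\zeta$, whereas you treat the linear twist directly; both are standard and your sign bookkeeping (including the relabeling $\mu\mapsto-\mu$ needed to match the normalization of $\rho_{L}(S)$ in \eqref{Weildef}) is accurate.
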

Note the complex conjugation in the denominators under $\zeta_{v_{-}}$ and $\zeta_{v_{-}}^{2}$.
\begin{proof}
The usual argument, using Equation \eqref{cocycle} and its logarithmic derivative, shows that if the asserted equality holds for two metaplectic elements and every value of $\tau$ and $\zeta$ then it holds for their product, again for every $\tau$ and $\zeta$. It therefore suffices to verify the desired equality for $T$, $T^{-1}$, and $S$ (the latter being of finite order). For $T^{\pm1}$ we only add $\pm1$ to $\tau$ on the left hand side, with $\zeta$ fixed. But this only multiplies the summand associated with $\lambda$ in Equation \eqref{JacTheta} by $\mathbf{e}\big(\pm\frac{\lambda^{2}}{2}\big)$, and this is the same as $\mathbf{e}\big(\pm\frac{\gamma^{2}}{2}\big)$ in case $\lambda \in L+\gamma$ with $\gamma \in D_{L}$. We thus obtain just $\rho_{L}(T^{\pm})\Theta_{L}(\tau,\zeta;v)$, which is the desired result since $\phi=1$ and $j_{T^{\pm}}'=0$. Considering now the left hand side with $A=S$, the multiplier $\theta_{L+\gamma}$ in front of $\mathfrak{e}_{\gamma}$ in the corresponding Equation \eqref{JacTheta} is \[\sum_{\lambda \in L+\gamma}\mathbf{e}\bigg(-\frac{\lambda_{v_{+}}^{2}}{2\tau}+\frac{(\lambda_{v_{+}},\zeta_{v_{+}})}{\tau}-\frac{\lambda_{v_{-}}^{2}}{2\overline{\tau}}+\frac{(\lambda_{v_{-}},\zeta_{v_{-}})}{\overline{\tau}}\bigg),\] which we can write as
\begin{equation}
\mathbf{e}\bigg(\frac{\zeta_{v_{+}}^{2}}{2\tau}+\frac{\zeta_{v_{-}}^{2}}{2\overline{\tau}}\bigg)\sum_{\lambda \in L+\gamma}\mathbf{e}\bigg(-\frac{(\lambda-\zeta)_{v_{+}}^{2}}{2\tau}-\frac{(\lambda-\zeta)_{v_{-}}^{2}}{2\overline{\tau}}\bigg). \label{quadcomp}
\end{equation}
When $\zeta \in L_{\mathbb{R}}$ we apply Equation \eqref{PSF} with $\xi=\gamma-\zeta$ and $\eta=0$, which transforms the expression from Equation \eqref{quadcomp} into $\mathbf{e}\big(\frac{b_{-}-b_{+}}{8}\big)\tau^{\frac{b_{+}}{2}}\overline{\tau}^{\frac{b_{-}}{2}}\mathbf{e}\big(\frac{\zeta_{v_{+}}^{2}}{2\tau}+\frac{\zeta_{v_{-}}^{2}}{2\overline{\tau}}\big)\big/\sqrt{D_{L}}$ times $\sum_{\mu \in L^{*}}\mathbf{e}\Big(\tau\frac{\mu_{v_{+}}^{2}}{2}+\overline{\tau}\frac{\mu_{v_{-}}^{2}}{2}+(\mu,\zeta)-(\gamma,\mu)\Big)$. Recalling that $j_{S}'=1$, $j(S,\tau)=\tau$, and $\phi(\tau)=\sqrt{\tau}$, this gives the multiplier on the required right hand side as well as the external coefficient in the definition of $\rho_{L}(S)$ in Equation \eqref{Weildef}. By separating the sum over $L^{*}$ into the sums over the different cosets of $L$, it produces $\sum_{\delta \in D_{L}}\mathbf{e}\big(-(\gamma,\delta)\big)\theta_{L+\delta}(\tau,\zeta;v)$, as desired. Since both sides are, for fixed $\tau$, holomorphic functions of $\zeta \in L_{\mathbb{C}}$ that coincide on $L_{\mathbb{R}}$, they are the same function. This proves the theorem.
\end{proof}

\medskip

One of the most useful properties of holomorphic Jacobi theta functions is that they are solutions of the (holomorphic) heat equation. For non-holomorphic Jacobi forms there is a variety of differential operators, which act as weight changing operators---see, e.g., \cite{[CWR]}, \cite{[WR]}, \cite{[BRR]}, and \cite{[RR]}. We shall shorthand the partial derivative $\frac{\partial}{\partial z}$ to $\partial_{z}$ for every variable $z$, and given a real quadratic space $V$ of some dimension $d$, we denote by $\Delta_{V}$ the Laplacian operator on functions on $V$ (this means that if $\xi_{l}$, $1 \leq l \leq d$ a set of variables associated with a basis for $V$, and $\xi_{l}^{*}$, $1 \leq l \leq d$ correspond to the basis that is dual to the previous one by the pairing, then $\Delta_{V}=\sum_{l=1}^{d}\partial_{\xi_{l}}\partial_{\xi_{l}^{*}}$, and this is independent of the choice of basis). We shall write $\Delta_{V}^{h}$ for the holomorphic Laplacian operator on functions on $V_{\mathbb{C}}$, that is obtained from $\Delta_{V}$ by replacing each real derivative by its complex holomorphic counterpart. We remark that there are analogous operators $\Delta_{V}^{\mathbb{R}}$ and $\Delta_{V}^{\overline{h}}$, in which one or both of the real derivatives are replaced by an anti-holomorphic one (as in, e.g., \cite{[Ze2]}), but since all the functions considered in this paper will be holomorphic in $\zeta$, only the Laplacian $\Delta_{V}^{h}$ will be of interest.

We can now determine some useful differential equations that are satisfied by our Jacobi--Siegel theta functions.
\begin{prop}
The theta function from Equation \eqref{JacTheta} is annihilated by the differential operators $4\pi i\partial_{\tau}-\Delta_{v_{+}}^{h}$, $4\pi i\partial_{\overline{\tau}}-\Delta_{v_{-}}^{h}$, and $\partial_{\overline{\zeta}_{l}}$ for $1 \leq l \leq b_{+}+b_{-}$. \label{difeq}
\end{prop}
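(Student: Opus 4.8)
The plan is to verify directly that each of the stated differential operators annihilates the Jacobi--Siegel theta function by differentiating termwise inside the sum in Equation \eqref{JacTheta}, which is legitimate because the series converges locally uniformly together with all its derivatives (the Gaussian factor $\mathbf{e}\big(\tau\frac{\lambda_{v_{+}}^{2}}{2}+\overline{\tau}\frac{\lambda_{v_{-}}^{2}}{2}\big)$ decays rapidly in $\lambda$ on compact subsets of $\mathcal{H}\times L_{\mathbb{C}}$, with $v$ fixed). So it suffices to check the identities on a single summand $\mathbf{e}\big(\tau\frac{\lambda_{v_{+}}^{2}}{2}+\overline{\tau}\frac{\lambda_{v_{-}}^{2}}{2}+(\lambda,\zeta)\big)$ for each $\lambda\in L^{*}$.

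The easiest is $\partial_{\overline{\zeta}_{i}}$: the exponent in each summand is a holomorphic function of $\zeta\in L_{\mathbb{C}}$ (the pairing $(\lambda,\zeta)$ is $\mathbb{C}$-bilinear and involves no conjugation, and $\lambda_{v_{\pm}}^{2}$ does not involve $\zeta$ at all), so every summand is holomorphic in $\zeta$ and hence killed by each anti-holomorphic derivative $\partial_{\overline{\zeta}_{i}}$; this passes to the sum. For the other two operators I would work in a basis adapted to the splitting $L_{\mathbb{R}}=v_{+}\oplus v_{-}$: choose coordinates $\xi_{j}$ on $v_{+}$ (with dual coordinates $\xi_{j}^{*}$) so that $\Delta_{v_{+}}^{h}=\sum_{j}\partial_{\xi_{j}}\partial_{\xi_{j}^{*}}$ acts on the $\zeta$-dependence through $(\lambda_{v_{+}},\zeta_{v_{+}})$, which is the only place the $v_{+}$-coordinates of $\zeta$ enter. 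Applying $\partial_{\xi_{j}}$ to the summand brings down a factor $2\pi i$ times the $j$-th coordinate of $\lambda_{v_{+}}$, and applying $\partial_{\xi_{j}^{*}}$ as well brings down $2\pi i$ times the matching dual coordinate; summing over $j$ and using that $\sum_{j}(\text{coord}_{j}\lambda_{v_{+}})(\text{dual coord}_{j}\lambda_{v_{+}})=\lambda_{v_{+}}^{2}$ gives $\Delta_{v_{+}}^{h}$ acting on the summand equal to $(2\pi i)^{2}\lambda_{v_{+}}^{2}=-4\pi^{2}\lambda_{v_{+}}^{2}$ times the summand. On the other hand $\partial_{\tau}$ brings down exactly $2\pi i\cdot\frac{\lambda_{v_{+}}^{2}}{2}=\pi i\lambda_{v_{+}}^{2}$ times the summand (the $\overline{\tau}$-term and the $(\lambda,\zeta)$-term are $\tau$-independent), so $4\pi i\partial_{\tau}$ yields $-4\pi^{2}\lambda_{v_{+}}^{2}$ times the summand. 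Hence $(4\pi i\partial_{\tau}-\Delta_{v_{+}}^{h})$ annihilates each summand, and therefore the whole series. The identity for $4\pi i\partial_{\overline{\tau}}-\Delta_{v_{-}}^{h}$ is proved identically with $v_{+}$ replaced by $v_{-}$ throughout, noting that $\partial_{\overline{\tau}}$ now hits precisely the $\overline{\tau}\frac{\lambda_{v_{-}}^{2}}{2}$ term.

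There is no real obstacle here; the only points requiring a word of care are the justification of termwise differentiation (standard, from the exponential decay of the Gaussian with $v$ fixed) and the basis-independence of the Laplacian computation, but the latter is exactly the coordinate-free statement recalled in the text before the proposition, so one may simply evaluate in any convenient basis. I would write the proof as: first dispatch $\partial_{\overline{\zeta}_{i}}$ by holomorphy in $\zeta$; then do the $(4\pi i\partial_{\tau}-\Delta_{v_{+}}^{h})$ computation on a single summand in an adapted basis; then remark that $(4\pi i\partial_{\overline{\tau}}-\Delta_{v_{-}}^{h})$ follows by the same computation with $v_{+}\leftrightarrow v_{-}$ and $\tau\leftrightarrow\overline{\tau}$.
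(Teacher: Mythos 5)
Your proposal is correct and follows essentially the same route as the paper's proof: termwise differentiation of the series in Equation \eqref{JacTheta}, the observation that $\partial_{\tau}$ and $\partial_{\overline{\tau}}$ multiply the $\lambda$-summand by $\pi i\lambda_{v_{\pm}}^{2}$ while $\Delta_{v_{\pm}}^{h}$, computed in an adapted basis, multiplies it by $(2\pi i)^{2}\lambda_{v_{\pm}}^{2}$, and holomorphy in $\zeta$ for the operators $\partial_{\overline{\zeta}_{i}}$. The only difference is that you spell out the (standard) justification for differentiating under the sum, which the paper leaves implicit.
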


\begin{proof}
We shall consider the action of each operator on the summand associated with a fixed element $\lambda \in L^{*}$ in Equation \eqref{JacTheta}. On one hand, the derivatives $\partial_{\tau}$ and $\partial_{\overline{\tau}}$ multiply this summand by $\pi i\lambda_{v_{\pm}}^{2}$. On the other hand, consider an orthonormal basis for $v_{\pm}$, in which the coordinates of $\lambda_{\pm}$ are $\lambda_{l}$ with $1 \leq l \leq b_{\pm}$, and the coordinates of the projection of $\zeta$ onto the complexification of $v_{\pm}$ (in the natural complex bilinear form) are $\zeta_{l}$, $1 \leq l \leq b_{\pm}$. Then $\Delta_{v_{\pm}}^{h}=\pm\sum_{l=1}^{b_{\pm}}\partial_{\zeta_{l}}^{2}$, the action of $\partial_{\zeta_{l}}$ on the summand in question multiplies it by $\pm2\pi i\lambda_{l}$, and thus $\Delta_{v_{\pm}}^{h}$ multiplies it by $(2\pi i)^{2}$ times $\pm\sum_{l=1}^{b_{\pm}}\lambda_{l}^{2}=\lambda_{\pm}^{2}$. The vanishing under the first two operators now follows from a simple subtraction, and the vanishing under the last operators is just the holomorphicity of $\Theta_{L}$ as a function of $\zeta$. This proves the proposition.
\end{proof}
Recall that the Laplacian $\Delta_{L_{\mathbb{R}}}$ is the sum $\Delta_{v_{+}}+\Delta_{v_{-}}$ for every $v\in\operatorname{Gr}(L_{\mathbb{R}})$, and that the difference $\Delta_{v_{+}}-\Delta_{v_{-}}$ is the operator, that we denote by $\Delta_{L_{\mathbb{R}},v}$ following \cite{[Ze3]} and others, that corresponds to the space $L_{\mathbb{R}}$ endowed with the quadratic structure coming from the majorant associated with $v$. The equalities $\partial_{x}=\partial_{\tau}+\partial_{\overline{\tau}}$ and $\partial_{y}=i(\partial_{\tau}-\partial_{\overline{\tau}})$ allow us to write the first two differential equations also as the vanishing under $4\pi i\partial_{x}-\Delta_{L_{\mathbb{R}}}^{h}$ and $4\pi\partial_{y}-\Delta_{L_{\mathbb{R}},v}^{h}$, as indeed, the actions of $\partial_{x}$ and $\partial_{y}$ multiply the summand associated with $\lambda$ by $\pi i\lambda^{2}$ and $-\pi(\lambda_{v_{+}}^{2}-\lambda_{v_{-}}^{2})$.

\section{Jacobi Forms \label{JacForms}}

We begin with some observations about Fourier expansions.
\begin{lem}
A function of $(\tau,\zeta)\in\mathcal{H} \times L_{\mathbb{C}}$ that, for fixed $\tau\in\mathcal{H}$, is holomorphic and $L$-periodic in $\zeta$, admits a formal Fourier expansion of the form $\sum_{\lambda \in L^{*}}f_{\lambda}(\tau)\mathbf{e}\Big(\tau\frac{\lambda_{v_{+}}^{2}}{2}+\overline{\tau}\frac{\lambda_{v_{-}}^{2}}{2}+(\lambda,\zeta)\Big)$. In case it satisfies the functional equation in $\zeta$ from Proposition \ref{perTheta} (or Equation \eqref{perJac} in Definition \ref{Jacdef} below), the function $f_{\lambda}$ depends only on the coset $\lambda+L \in D_{L}$. \label{Fourinv}
\end{lem}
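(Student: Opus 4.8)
The plan is to obtain the shape of the expansion from the Laurent expansion of a holomorphic function on a product of punctured planes, and then to extract the coset‑invariance of the coefficients $f_{\lambda}$ by feeding this expansion into the functional equation of Proposition \ref{perTheta} and comparing Fourier coefficients.

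For the first assertion I would fix $\tau\in\mathcal{H}$ and pick a $\mathbb{Z}$‑basis $e_{1},\ldots,e_{n}$ of $L$ (so $n=\operatorname{rank}L$), which is at the same time a $\mathbb{C}$‑basis of $L_{\mathbb{C}}$; writing $\zeta=\sum_{i}z_{i}e_{i}$, the hypothesis says that $F(\tau,\cdot)$ is holomorphic in $(z_{1},\ldots,z_{n})$ and invariant under each $z_{i}\mapsto z_{i}+1$, so it descends to a holomorphic function of $(q_{1},\ldots,q_{n})\in(\mathbb{C}^{\times})^{n}$, $q_{i}=\mathbf{e}(z_{i})$, and hence has a Laurent expansion $\sum_{m\in\mathbb{Z}^{n}}c_{m}(\tau)\prod_{i}q_{i}^{m_{i}}$. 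Since $z_{i}=(e_{i}^{*},\zeta)$ for the dual basis $e_{i}^{*}$, and since $\mu:=\sum_{i}m_{i}e_{i}^{*}$ runs precisely over $L^{*}$, this rewrites as $F(\tau,\zeta)=\sum_{\lambda\in L^{*}}c_{\lambda}(\tau)\mathbf{e}\big((\lambda,\zeta)\big)$; putting $f_{\lambda}(\tau):=c_{\lambda}(\tau)\mathbf{e}\big(-\tau\frac{\lambda_{v_{+}}^{2}}{2}-\overline{\tau}\frac{\lambda_{v_{-}}^{2}}{2}\big)$ then gives the asserted form. (The word ``formal'' reflects only that no growth condition has yet been imposed on the $f_{\lambda}$.)

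For the second assertion I would assume that $F$ obeys the functional equation of Proposition \ref{perTheta}, fix $\sigma\in L$ (the auxiliary $\nu\in L$ being irrelevant, as it only contributes $\mathbf{e}\big((\lambda,\nu)\big)=1$), and substitute the expansion into both sides. On the left, translating $\zeta$ by $\tau\sigma_{v_{+}}+\overline{\tau}\sigma_{v_{-}}$ multiplies the $\lambda$‑summand by $\mathbf{e}\big(\tau(\lambda_{v_{+}},\sigma_{v_{+}})+\overline{\tau}(\lambda_{v_{-}},\sigma_{v_{-}})\big)$, by orthogonality of the projections; on the right, the explicit multiplier together with $\mathbf{e}\big(-(\sigma,\zeta)\big)$, after re‑indexing $\lambda\mapsto\lambda+\sigma$, produces the factor $\mathbf{e}\big(-\tau\frac{\sigma_{v_{+}}^{2}}{2}-\overline{\tau}\frac{\sigma_{v_{-}}^{2}}{2}\big)$ in front of $\sum_{\lambda}c_{\lambda+\sigma}(\tau)\mathbf{e}\big((\lambda,\zeta)\big)$. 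Uniqueness of the coefficients established in the first part then licenses a termwise comparison, giving
\[c_{\lambda+\sigma}(\tau)=c_{\lambda}(\tau)\,\mathbf{e}\Big(\tau\big((\lambda_{v_{+}},\sigma_{v_{+}})+\tfrac{\sigma_{v_{+}}^{2}}{2}\big)+\overline{\tau}\big((\lambda_{v_{-}},\sigma_{v_{-}})+\tfrac{\sigma_{v_{-}}^{2}}{2}\big)\Big).\]
Finally, invoking the identity $\frac{\lambda_{v_{\pm}}^{2}}{2}+(\lambda_{v_{\pm}},\sigma_{v_{\pm}})+\frac{\sigma_{v_{\pm}}^{2}}{2}=\frac{(\lambda+\sigma)_{v_{\pm}}^{2}}{2}$ already exploited in the proof of Proposition \ref{perTheta}, this displayed relation is exactly $c_{\lambda+\sigma}(\tau)\mathbf{e}\big(-\tau\frac{(\lambda+\sigma)_{v_{+}}^{2}}{2}-\overline{\tau}\frac{(\lambda+\sigma)_{v_{-}}^{2}}{2}\big)=c_{\lambda}(\tau)\mathbf{e}\big(-\tau\frac{\lambda_{v_{+}}^{2}}{2}-\overline{\tau}\frac{\lambda_{v_{-}}^{2}}{2}\big)$, i.e.\ $f_{\lambda+\sigma}(\tau)=f_{\lambda}(\tau)$; since $\sigma\in L$ is arbitrary, $f_{\lambda}$ depends only on $\lambda+L\in D_{L}$.

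I do not expect a genuine obstacle here: the whole computation runs in parallel with the proof of Proposition \ref{perTheta}. The one step that needs a word of justification is the very first one --- that holomorphicity together with $L$‑periodicity forces the expansion to be supported on $L^{*}$ with $\zeta$‑dependence only through the exponentials $\mathbf{e}\big((\lambda,\zeta)\big)$ --- which is precisely the passage to $(\mathbb{C}^{\times})^{n}$ and the standard existence and uniqueness of a Laurent expansion there; that same uniqueness is what legitimises the termwise comparison of the two sides of the functional equation.
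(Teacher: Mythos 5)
Your proposal is correct and follows essentially the same route as the paper: expand the function as a Fourier series in $\zeta$ supported on $L^{*}$ (the paper simply asserts this where you justify it via the Laurent expansion on $(\mathbb{C}^{\times})^{n}$), substitute into the functional equation, re-index by $\sigma$, and compare coefficients using the identity $\frac{\lambda_{v_{\pm}}^{2}}{2}+(\lambda_{v_{\pm}},\sigma_{v_{\pm}})+\frac{\sigma_{v_{\pm}}^{2}}{2}=\frac{(\lambda+\sigma)_{v_{\pm}}^{2}}{2}$. The only cosmetic difference is that you carry out the comparison on the coefficients $c_{\lambda}=\tilde{f}_{\lambda}$ and convert to $f_{\lambda}$ at the end, while the paper works with the $f_{\lambda}$ throughout.
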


\begin{proof}
Holomorphicity and $L$-periodicity allow us to expand, for every $\tau$, the function in question as $\sum_{\lambda \in L^{*}}\tilde{f}_{\lambda}(\tau)\mathbf{e}\big((\lambda,\zeta)\big)$. By writing each function $\tilde{f}_{\lambda}(\tau)$ as $\mathbf{e}\Big(\tau\frac{\lambda_{v_{+}}^{2}}{2}+\overline{\tau}\frac{\lambda_{v_{-}}^{2}}{2}\Big)$ times another function $f_{\lambda}(\tau)$, we establish the first assertion. For the second one we fix $\sigma \in L$, consider a function with an expansion as in the first assertion, and substitute this expansion into the two sides of an equality like that from Proposition \ref{perTheta} or Equation \eqref{perJac}. This gives \[\sum_{\lambda \in L^{*}}f_{\lambda}(\tau)\mathbf{e}\bigg(\tau\frac{\lambda_{v_{+}}^{2}}{2}+\overline{\tau}\frac{\lambda_{v_{-}}^{2}}{2}+(\lambda,\zeta)+\tau(\lambda_{v_{+}},\sigma_{v_{+}})+\overline{\tau}(\lambda_{v_{-}},\sigma_{v_{-}})\bigg)\] on the left hand side, while the right hand side becomes \[\sum_{\mu \in L^{*}}f_{\mu}(\tau)\mathbf{e}\bigg(\tau\frac{\mu_{v_{+}}^{2}}{2}+\overline{\tau}\frac{\mu_{v_{-}}^{2}}{2}+(\mu,\zeta)-\tau\frac{\sigma_{v_{+}}^{2}}{2}-\overline{\tau}\frac{\sigma_{v_{-}}^{2}}{2}-(\sigma,\zeta)\bigg).\] Since by substituting $\mu=\lambda+\sigma$ the exponent on the latter expression becomes that from the former, the uniqueness of Fourier expansions yields $f_{\lambda+\sigma}=f_{\lambda}$ as functions of $\tau$, which implies the required assertion. This proves the lemma.
\end{proof}
For presenting the consequence of Lemma \ref{Fourinv} in convenient terms, we recall that the lattice $L(-1)$, in which the signs of the quadratic and bilinear forms are inverted, produces the discriminant group $D_{L(-1)}=D_{L}(-1)$, the basis for which we write as $\{\mathfrak{e}_{\gamma}^{*}\}_{\gamma \in D_{L}}$. Identifying the space $\mathbb{C}[D_{L}(-1)]$ with the dual of $\mathbb{C}[D_{L}]$ by taking the latter basis to be the dual to $\{\mathfrak{e}_{\gamma}\}_{\gamma \in D_{L}}$, we obtain a canonical identification of the representation $\rho_{L(-1)}$ with the representation $\rho_{L}^{*}$ dual to $\rho_{L}$. We denote the resulting bilinear pairing $\mathbb{C}[D_{L}]\times\mathbb{C}[D_{L}(-1)]\to\mathbb{C}$ by $\langle\cdot,\cdot\rangle_{L}$, so that the meaning of this duality is that the equalities
\begin{equation}
\big\langle\rho_{L}(A,\phi)U,V\big\rangle_{L}=\big\langle U,\rho_{L}^{*}(A,\phi)^{-1}V \big\rangle_{L}\mathrm{\ and\ }\big\langle\rho_{L}(A,\phi)U,\rho_{L}^{*}(A,\phi)V\big\rangle_{L}\!=\big\langle U,V \big\rangle_{L} \label{duality}
\end{equation}
hold for every $U\in\mathbb{C}[D_{L}]$, $V\in\mathbb{C}[D_{L}(-1)]$, and $(A,\phi)\in\operatorname{Mp}_{2}(\mathbb{Z})$.

We now obtain the following consequence of Lemma \ref{Fourinv}.
\begin{cor}
Given a smooth function $\Phi:\mathcal{H} \times L_{\mathbb{C}}\to\mathbb{C}$ satisfying the functional equation from Proposition \ref{perTheta}, as in Equation \eqref{perJac} below, there exists a unique smooth function $F:\mathcal{H}\to\mathbb{C}[D_{L}(-1)]$ such that the equality $\Phi(\tau,\zeta)=\big\langle\Theta_{L}(\tau,\zeta;v),F(\tau)\big\rangle_{L}$ holds for every $(\tau,\zeta)\in\mathcal{H} \times L_{\mathbb{C}}$. \label{decomtheta}
\end{cor}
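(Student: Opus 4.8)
The plan is to apply Lemma \ref{Fourinv} to $\Phi$ and then repackage the resulting Fourier expansion as a pairing against $\Theta_L$. First I would invoke the second assertion of Lemma \ref{Fourinv}: since $\Phi$ is smooth, holomorphic and $L$-periodic in $\zeta$, and satisfies the functional equation from Proposition \ref{perTheta}, there are smooth functions $f_\gamma:\mathcal{H}\to\mathbb{C}$, indexed by $\gamma\in D_L$, with
\[
\Phi(\tau,\zeta)=\sum_{\gamma\in D_L}f_\gamma(\tau)\sum_{\lambda\in L+\gamma}\mathbf{e}\Big(\tau\tfrac{\lambda_{v_+}^2}{2}+\overline{\tau}\tfrac{\lambda_{v_-}^2}{2}+(\lambda,\zeta)\Big)
=\sum_{\gamma\in D_L}f_\gamma(\tau)\,\theta_{L+\gamma}(\tau,\zeta;v).
\]
(One should note that the inner sum is exactly the coefficient $\theta_{L+\gamma}$ of $\mathfrak{e}_\gamma$ in Equation \eqref{JacTheta}, so the Fourier expansion of $\Phi$ is literally a linear combination of the scalar components of the Jacobi--Siegel theta function.)

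Next I would define $F(\tau):=\sum_{\gamma\in D_L}f_\gamma(\tau)\,\mathfrak{e}_\gamma^{*}\in\mathbb{C}[D_L(-1)]$, which is smooth in $\tau$ because each $f_\gamma$ is. Under the pairing $\langle\cdot,\cdot\rangle_L$ the dual basis satisfies $\langle\mathfrak{e}_\delta,\mathfrak{e}_\gamma^{*}\rangle_L=\delta_{\gamma\delta}$ by construction, so
\[
\big\langle\Theta_L(\tau,\zeta;v),F(\tau)\big\rangle_L
=\sum_{\delta\in D_L}\sum_{\gamma\in D_L}\theta_{L+\delta}(\tau,\zeta;v)\,f_\gamma(\tau)\langle\mathfrak{e}_\delta,\mathfrak{e}_\gamma^{*}\rangle_L
=\sum_{\gamma\in D_L}f_\gamma(\tau)\,\theta_{L+\gamma}(\tau,\zeta;v)=\Phi(\tau,\zeta),
\]
which gives existence.

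For uniqueness I would observe that the components $\theta_{L+\gamma}(\tau,\zeta;v)$, $\gamma\in D_L$, are linearly independent as functions of $(\tau,\zeta)$ over smooth functions of $\tau$: their Fourier expansions in $\zeta$ are supported on disjoint cosets $L+\gamma\subseteq L^{*}$, so a relation $\sum_\gamma g_\gamma(\tau)\theta_{L+\gamma}(\tau,\zeta;v)=0$ forces, by the uniqueness of Fourier expansions (and the fact that for fixed generic $\tau$ not all coefficients $\mathbf{e}(\tau\lambda_{v_+}^2/2+\overline\tau\lambda_{v_-}^2/2)$ vanish in a given coset), each $g_\gamma\equiv 0$. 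Hence two functions $F$ with $\langle\Theta_L,F\rangle_L=\Phi$ must have the same components, so they agree. I do not expect a serious obstacle here: the real content has already been packed into Lemma \ref{Fourinv}, and this corollary is essentially a change of bookkeeping from scalar Fourier coefficients to a vector in $\mathbb{C}[D_L(-1)]$. The one point requiring a modicum of care is the linear independence argument underlying uniqueness — one must make sure that for each $\gamma$ there is genuinely a nonzero Fourier mode, which follows because $0\in L$ gives the constant-in-$\zeta$ term $f_\gamma(\tau)$ in the $\gamma$-component already (the $\lambda=0$, i.e.\ $\gamma=0$, case is immediate, and for $\gamma\neq 0$ one uses that $L+\gamma$ is nonempty and the exponentials are linearly independent over $\mathbb{C}$).
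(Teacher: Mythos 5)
Your proposal is correct and follows essentially the same route as the paper: apply Lemma \ref{Fourinv}, collect the coefficients $f_{\lambda}$ into coset functions $f_{\gamma}$, define $F=\sum_{\gamma}f_{\gamma}\mathfrak{e}_{\gamma}^{*}$, and observe that the dual-basis pairing against $\Theta_{L}$ reproduces the Fourier expansion, with uniqueness coming from the fact that the components of $F$ are read off from the (disjointly supported, never-vanishing) Fourier modes of $\Phi$ in $\zeta$. The extra care you take with the linear independence of the $\theta_{L+\gamma}$ is a slightly more explicit version of the paper's one-line uniqueness remark, not a different argument.
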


\begin{proof}
Lemma \ref{Fourinv} allows us to define $f_{\gamma}$ for $\gamma \in D_{L}$ to be $f_{\lambda}$ for any $\lambda \in L^{*}$ with $\lambda+L=\gamma$, and we set $F(\tau)=\sum_{\gamma \in D_{L}}f_{\gamma}(\tau)\mathfrak{e}_{\gamma}^{*}$. Then the expansion of $\Phi$ given in the statement of that lemma becomes, using $f_{\gamma}$ for $f_{\lambda}$ with $\lambda+L=\gamma$, just $\sum_{\gamma \in D_{L}}f_{\gamma}(\tau)\sum_{\lambda \in L+\gamma}\mathbf{e}\Big(\tau\frac{\lambda_{v_{+}}^{2}}{2}+\overline{\tau}\frac{\lambda_{v_{-}}^{2}}{2}+(\lambda,\zeta)\Big)$, which amounts to the asserted pairing by Equation \eqref{JacTheta} and the definition of the pairing. The uniqueness of $F$ follows from the fact that its components are determined from the Fourier expansion of $\Phi$ in $\zeta$. This proves the corollary.
\end{proof}

\smallskip

We can now define the main object of this paper.
\begin{defn}
Let $\Gamma$ be a finite index subgroup of $\operatorname{Mp}_{2}(\mathbb{Z})$, and take $k$ and $l$ in $\frac{1}{2}\mathbb{Z}$. Then a \emph{Jacobi form of weight $(k,l)$ and index $(L,v)$ with respect to $\Gamma$} is a smooth function $\Phi:\mathcal{H} \times L_{\mathbb{C}}\to\mathbb{C}$ satisfying the following four conditions: First, the functional equation
\begin{equation}
\Phi(\tau,\zeta+\tau\sigma_{v_{+}}+\overline{\tau}\sigma_{v_{-}}+\nu)=\mathbf{e}\bigg(-\tau\frac{\sigma_{v_{+}}^{2}}{2}-\overline{\tau}\frac{\sigma_{v_{-}}^{2}}{2}-(\sigma,\zeta)\bigg)\Phi(\tau,\zeta) \label{perJac}
\end{equation}
for every $\tau\in\mathcal{H}$, $\zeta \in L_{\mathbb{C}}$, and $\sigma$ and $\nu$ in $L$; Second, the functional equation
\begin{equation}
\Phi\Big(A\tau,\tfrac{\zeta_{v_{+}}}{j(A,\tau)}+\tfrac{\zeta_{v_{-}}}{\overline{j(A,\tau)}}\Big)= \phi(\tau)^{2k}\overline{\phi(\tau)}^{2l}\mathbf{e}\Bigg(\frac{j_{A}'\zeta_{v_{+}}^{2}}{2j(A,\tau)}+\frac{j_{A}'\zeta_{v_{-}}^{2}}{2\overline{j(A,\tau)}}\Bigg)\Phi(\tau,\zeta) \label{modJac}
\end{equation}
for such $\tau$ and $\zeta$ and for $(A,\phi)\in\Gamma$; Third, the function $\zeta\mapsto\Phi(\tau,\zeta)$ is holomorphic for fixed $\tau\in\mathcal{H}$; And fourth, the functions $f_{\lambda}$ from Lemma \ref{Fourinv} have at most linear exponential growth toward each cusp. The Jacobi form is called \emph{pseudo-holomorphic} if it is annihilated by the operator $4\pi i\partial_{\overline{\tau}}-\Delta_{v_{-}}^{h}$ from Proposition \ref{difeq}. It is called \emph{skew-holomorphic} in case it is annihilated by the other operator $4\pi i\partial_{\tau}-\Delta_{v_{+}}^{h}$ from that proposition. \label{Jacdef}
\end{defn}

\begin{rmk}
As in \cite{[BRR]}, \cite{[RR]}, and others, one can consider the Heisenberg group covering $L_{\mathbb{R}}^{2}$ (with kernel $\mathbb{R}$ or $S^{1}=\mathbf{e}(\mathbb{R}/\mathbb{Z})$) and the real Jacobi group in which $\operatorname{SL}_{2}(\mathbb{R})$ (or $\operatorname{Mp}_{2}(\mathbb{R})$) operates on this Heisenberg group. Then Equations \eqref{perJac} and \eqref{modJac} can be viewed as invariance under the corresponding slash operators, when one reduces attention to $\Gamma$ acting on the integral Heisenberg group of $L$ (which is either $L \times L$ or an extension of $L \times L$ by $\mathbb{Z}$). \label{Heis}
\end{rmk}
Note that while the extension to the Heisenberg group from Remark \ref{Heis} has a trivial action in the integral setting, it does not happen in the real one. The resulting factors are related to the functions appearing in Section \ref{Char} below.

We define a \emph{modular form of weight $(k,l)$ and representation $\rho_{L}^{*}$ with respect to $\Gamma$} to be a smooth function $F:\mathcal{H}\to\mathbb{C}[D_{L}(-1)]$ satisfying
\begin{equation}
F(A\tau)=\phi(\tau)^{2k}\overline{\phi(\tau)}^{2l}\rho_{L}^{*}(A,\phi)F(\tau) \label{modeq}
\end{equation}
for every $\tau\in\mathcal{H}$ and $(A,\phi)\in\Gamma$, and having at most linear exponential growth at the cusps. We can now state the first main theorem of this paper.
\begin{thm}
Consider the map taking a Jacobi form $\Phi$ of weight $(k,l)$ and index $(L,v)$ with respect to $\Gamma$, as given in Definition \ref{Jacdef}, to the smooth function $F:\mathcal{H}\to\mathbb{C}[D_{L}(-1)]$ from Corollary \ref{decomtheta}. On the other hand, we define the map taking a modular form of weight $\big(k-\frac{b_{+}}{2},l-\frac{b_{-}}{2}\big)$ and representation $\rho_{L}^{*}$ with respect to $\Gamma$ to the function $\Phi_{L,v}^{F}:\mathcal{H} \times L_{\mathbb{C}}\to\mathbb{C}$ given by the equality $\Phi_{L,v}^{F}(\tau,\zeta):=\big\langle\Theta_{L}(\tau,\zeta;v),F(\tau)\big\rangle_{L}$. Then these maps are inverse isomorphisms between the space of Jacobi forms of such weight and index and the space of modular forms of such weight and representation, both with respect to $\Gamma$. \label{main}
\end{thm}

\begin{proof}
Once we show that the first map takes Jacobi forms to modular forms and the second one takes modular forms to Jacobi forms, it will be clear that the two maps are inverse linear isomorphisms.

Assume thus that $\Phi$ is as in Definition \ref{Jacdef}, and then Equation \eqref{perJac}, Lemma \ref{Fourinv}, and Corollary \ref{decomtheta} indeed produce the unique smooth function $F$ such that $\Phi$ is its pairing with $\Theta_{L}$ from Equation \eqref{JacTheta}. As $F$ has the required growth conditions by Definition \ref{Jacdef}, we need to check its modularity. We therefore fix $\tau\in\mathcal{H}$, $\zeta \in L_{\mathbb{C}}$, and $(A,\phi)\in\operatorname{Mp}_{2}(\mathbb{Z})$, and expand both sides of Equation \eqref{modJac} using the pairing from Corollary \ref{decomtheta}. The right hand side is thus just \[\phi(\tau)^{2k}\overline{\phi(\tau)}^{2l}\mathbf{e}\Bigg(\frac{j_{A}'\zeta_{v_{+}}^{2}}{2j(A,\tau)}+\frac{j_{A}'\zeta_{v_{-}}^{2}}{2\overline{j(A,\tau)}}\Bigg)\big\langle\Theta_{L}(\tau,\zeta;v),F(\tau)\big\rangle_{L},\] while Theorem \ref{modTheta} and the bilinearity of the pairing transform the left hand side of Equation \eqref{modJac} into \[\phi(\tau)^{b_{+}}\overline{\phi(\tau)}^{b_{-}}\mathbf{e}\Bigg(\frac{j_{A}'\zeta_{v_{+}}^{2}}{2j(A,\tau)}+\frac{j_{A}'\zeta_{v_{-}}^{2}}{2\overline{j(A,\tau)}}\Bigg) \big\langle\rho_{L}(A,\phi)\Theta_{L}(\tau,\zeta;v),F(A\tau)\big\rangle_{L}.\] Comparing and applying Equation \eqref{duality}, we deduce that \[\Big\langle\Theta_{L}(\tau,\zeta;v),\phi(\tau)^{2k-b_{+}}\overline{\phi(\tau)}^{2l-b_{-}}F(\tau)\Big\rangle_{L}=\big\langle\Theta_{L}(\tau,\zeta;v),\rho_{L}^{*}(A,\phi)^{-1}F(A\tau)\big\rangle_{L}\] as functions on $\mathcal{H} \times L_{\mathbb{C}}$, from which the uniqueness of Fourier expansions (as in the proof of Corollary \ref{decomtheta}) produces the desired modularity from Equation \eqref{modeq}, with the asserted weights.

Conversely, assume that $F$ satisfies the required modularity condition, and define $\Phi_{L,v}^{F}$ as above. Proposition \ref{perTheta}, the bilinearity, and the fact that $F$ does not depend on $\zeta$ imply that $\Phi=\Phi_{L,v}^{F}$ satisfies Equation \eqref{perJac}. Next, given $(A,\phi)\in\operatorname{Mp}_{2}(\mathbb{Z})$, Theorem \ref{modTheta} and Equation \eqref{modeq} express the left hand side of Equation \eqref{modJac} (with $\Phi=\Phi_{L,v}^{F}$) as \[\mathbf{e}\Bigg(\frac{j_{A}'\zeta_{v_{+}}^{2}}{2j(A,\tau)}+\frac{j_{A}'\zeta_{v_{-}}^{2}}{2\overline{j(A,\tau)}}\Bigg)\phi(\tau)^{2k}\overline{\phi(\tau)}^{2l} \Big\langle\rho_{L}(A,\phi)\Theta_{L}(\tau,\zeta;v),\rho_{L}^{*}(A,\phi)F(\tau)\Big\rangle_{L}.\] As this becomes the right hand side of the latter equation by Equation \eqref{duality} and the definition of $\Phi_{L,v}^{F}$, this functional equation is also established. Since the holomorphicity in $\zeta$ and the growth condition of $F$ are clear, we deduce that $\Phi_{L,v}^{F}$ has the required properties from Definition \ref{Jacdef}. This proves the theorem.
\end{proof}

\begin{rmk}
As the group $\Gamma$ contains some power $T^{N}$ of $T$, for which $j_{T^{N}}'=0$, Equation \eqref{modJac} implies that every Jacobi form $\Phi$ as in Definition \ref{Jacdef} is also $N$-periodic in $\tau$. Then the convenient form for the Fourier expansion from Lemma \ref{Fourinv} is $\sum_{m\in\frac{1}{N}\mathbb{Z}}\sum_{\lambda \in L^{*}}c_{m,\lambda}(y)\mathbf{e}\big(mx+(\lambda,\zeta)\big)e^{-\pi(\lambda_{v_{+}}^{2}-\lambda_{v_{-}}^{2})y}$. The consequence of Equation \eqref{perJac} is the equality $c_{m,\lambda}=c_{m+(\sigma,\lambda)+\frac{\sigma^{2}}{2},\lambda+\sigma}$ (as functions of $y$) for all $\sigma \in L$, which is more familiar from the classical theory of Jacobi forms. It follows that if we define $a_{n,\lambda}$, for $n\in\mathbb{Q}$, to be the function $c_{n+\frac{\lambda^{2}}{2},\lambda}$ (and 0 when the first index is not in $\frac{1}{N}\mathbb{Z}$), then for every $n$ the function $a_{n,\lambda}$ depends only on the image of $\lambda$ in $D_{L}$, hence we can write it as $a_{n,\gamma}$ for $\gamma \in D_{L}$. We can thus express $c_{m,\lambda}$ as $a_{m-\frac{\lambda^{2}}{2},\lambda+L}$, replace $m$ by $n=m-\frac{\lambda^{2}}{2}$, and the summand associated with $\lambda$ and $n$ becomes $a_{n,\lambda+L}\mathbf{e}\big(nx+(\lambda,\zeta)\big)\mathbf{e}\big(x\frac{\lambda^{2}}{2}\big)e^{-\pi(\lambda_{v_{+}}^{2}-\lambda_{v_{-}}^{2})y}$. Since for $\gamma \in D_{L}$ the sum over $\lambda$ with $\lambda+L=\gamma$ is the coefficient $\theta_{L+\gamma}(\tau,\zeta;v)$ appearing in front of $\mathfrak{e}_{\gamma}$ in Equation \eqref{JacTheta}, the remaining expansion, namely $\sum_{\gamma \in D_{L}}\sum_{n\in\mathbb{Q}}\mathbf{e}(nx)a_{n,\gamma}(y)$, is the Fourier expansion of $F$ from Corollary \ref{decomtheta}, i.e., the modular form $F$ such that $\Phi=\Phi_{L,v}^{F}$ as in Theorem \ref{main}. However, for avoiding the question of convergence of such series, it was easier to begin with $F(\tau)$ itself, rather than its Fourier expansion. \label{JacFour}
\end{rmk}

\smallskip

Theorem \ref{main} produces the desired isomorphism on the analytic level. The relation to holomorphicity on the level of vector-valued modular forms is as follows.
\begin{prop}
Given a modular form $F$ of weight $\big(k-\frac{b_{+}}{2},l-\frac{b_{-}}{2}\big)$ and representation $\rho_{L}^{*}$ with respect to $\Gamma$, let $\Phi_{L,v}^{F}$ be the Jacobi form corresponding to it via Theorem \ref{main}. Then $F$ is weakly holomorphic if and only if $\Phi_{L,v}^{F}$ is pseudo-holomorphic. In particular, pseudo-holomorphic Jacobi forms exist only in weights $\big(k-\frac{b_{+}}{2},\frac{b_{-}}{2}\big)$. \label{pshol}
\end{prop}

\begin{proof}
The fact that $F$ does not depend on $\zeta$, the bilinearity of the pairing, and the action of $\partial_{\overline{\tau}}$ on products shows that applying the operator $4\pi i\partial_{\overline{\tau}}-\Delta_{v_{-}}^{h}$ from Definition \ref{Jacdef} to the definition of $\Phi_{L,v}^{F}$ expresses $(4\pi i\partial_{\overline{\tau}}-\Delta_{v_{-}}^{h})\Phi_{L,v}^{F}(\tau,\zeta)$ as \[\big\langle(4\pi i\partial_{\tau}-\Delta_{v_{-}}^{h})\Theta_{L}(\tau,\zeta;v),F(\tau)\big\rangle_{L}+4\pi i\big\langle\Theta_{L}(\tau,\zeta;v),\partial_{\overline{\tau}}F(\tau)\big\rangle_{L}.\] Since the first term vanishes by Proposition \ref{difeq}, the uniqueness of Fourier expansions in $\zeta$ implies that $\Phi_{L,v}^{F}$ is pseudo-holomorphic if and only if $\partial_{\overline{\tau}}F=0$. The fact that $F$ is smooth on $\mathcal{H}$ and has at most linear exponential growth at $\infty$ implies that the latter equality is equivalent to the weak holomorphicity of $F$. The fact that weakly holomorphic modular forms cannot have weights $(k,l)$ with $l\neq0$ (indeed, if $F$ is weakly holomorphic then the left hand side of Equation \eqref{modeq} is holomorphic on $\mathcal{H}$, but the right hand side will not be holomorphic when $l\neq0$) now immediately implies the last assertion. This proves the proposition.
\end{proof}

\begin{rmk}
In terms of the Fourier expansions from Remark \ref{JacFour}, holomorphicity is easier to check if the Fourier expansion of $F$ is in terms of holomorphic exponentials. We therefore write the function $a_{n,\gamma}(y)$ from that expansion as $\tilde{a}_{n,\gamma}(y)e^{-2\pi ny}$ (so that it indeed multiplies $\mathbf{e}(n\tau)$), which means that the function $c_{m,\lambda}(y)=a_{m-\frac{\lambda^{2}}{2},\lambda+L}(y)$ is $e^{-2\pi my+\pi\lambda^{2}y}$ times $\tilde{c}_{m,\lambda}(y)=\tilde{a}_{m-\frac{\lambda^{2}}{2},\lambda+L}(y)$. Recalling that $\lambda^{2}=\lambda_{v_{+}}^{2}+\lambda_{v_{-}}^{2}$, the corresponding expansion of $\Phi_{L,v}^{F}(\tau,\zeta)$ is as $\sum_{m\in\frac{1}{N}\mathbb{Z}}\sum_{\lambda \in L^{*}}\tilde{c}_{m,\lambda}(y)\mathbf{e}\big(m\tau+(\lambda,\zeta)\big)e^{2\pi\lambda_{v_{-}}^{2}y}$. This makes Proposition \ref{pshol} a bit more visible, since $F$ is weakly holomorphic if and only if the functions $\tilde{c}_{m,\lambda}$ are constant. Indeed, this is equivalent to the operator $4\pi i\partial_{\overline{\tau}}$ from Definition \ref{Jacdef} operating only on the latter exponentials, in the same way that $\Delta_{v_{-}}^{h}$ acts on the exponential of $(\lambda,\zeta)$. \label{holFour}
\end{rmk}
Note that the last formula in Remark \ref{holFour} is in correspondence with $F$ and $\Phi$ being holomorphic together in the positive definite case, since then the exponentials in $y$ all become trivial.

\smallskip

We conclude with the following remark.
\begin{rmk}
In the indefinite case, we obtain the isomorphisms from Theorem \ref{main} and Proposition \ref{pshol} for every $v\in\operatorname{Gr}(L_{\mathbb{R}})$. Therefore, for a fixed modular form $F$, we obtain a function on $\mathcal{H} \times L_{\mathbb{C}}\times\operatorname{Gr}(L_{\mathbb{R}})$, namely the one taking $\tau$, $\zeta$, and $v$ to $\Phi_{L,v}^{F}(\tau,\zeta)$ (perhaps better written as $\Phi_{L}^{F}(\tau,\zeta;v)$, or just $\Phi(\tau,\zeta;v)$, for this point of view). It is clear that this function is smooth, but it also has an additional invariance property. Recall that $\Gamma_{L}\subseteq\operatorname{SO}^{+}(L)$ operates on $\operatorname{Gr}(L_{\mathbb{R}})$, and by extending scalars from $L$ or $L_{\mathbb{R}}$ to $L_{\mathbb{C}}$ it operates both on the variables $v$ and $\zeta$. Since one easily checks that the Jacobi--Siegel theta function from Equation \eqref{JacTheta} is invariant under the diagonal action of the discriminant kernel $\Gamma_{L}$ from Equation \eqref{GammaL} on these two variables, we deduce that the equality $\Phi(\tau,\zeta;v)=\Phi(\tau,\mathcal{A}\zeta;\mathcal{A}v)$ holds for every such variables and $\mathcal{A}$ in the latter group (in particular, in the definite case, all our Jacobi forms are invariant under the action of the finite group $\Gamma_{L}$ on the variable $\zeta$). \label{funcofv}
\end{rmk}
It will be interesting to see what other properties do the functions from Remark \ref{funcofv} have.

\section{Some Operations \label{Oper}}

It is clear that the Grassmannians associated with $L$ and with $L(-1)$ are canonically isomorphic (with $v_{+}$ and $v_{-}$ interchanged for every $v$ in the common Grassmannian). Moreover, if complex conjugation is defined to take $\mathfrak{e}_{\gamma}\in\mathbb{C}[D_{L}]$ to $\mathfrak{e}_{\gamma}^{*}\in\mathbb{C}[D_{L}(-1)]$, then we get the canonical identification of $\rho_{L(-1)}=\rho_{L}^{*}$ with the complex conjugate representation $\overline{\rho}_{L}$. This shows that for $U\in\mathbb{C}[D_{L}]$ and $V\in\mathbb{C}[D_{L}(-1)]$ we have $\overline{\langle U,V \rangle_{L}}=\langle\overline{U},\overline{V}\rangle_{L(-1)}$. It also follows that if $F$ is a modular form of weight $\big(k-\frac{b_{+}}{2},l-\frac{b_{-}}{2}\big)$ and representation $\rho_{L}^{*}$ then the complex conjugate function $\overline{F}$ is modular of weight $\big(l-\frac{b_{-}}{2},k-\frac{b_{+}}{2}\big)$ and representation $\rho_{L}$. Let $\Phi_{L,v}^{F}$ be the Jacobi form of weight $(k,l)$ and index $(L,v)$ that is associated with $F$ by Theorem \ref{main}, and similarly $\Phi_{L(-1),v}^{\overline{F}}$ is the Jacobi form of weight $(l,k)$ index $\big(L(-1),v\big)$ corresponding to $\overline{F}$. Then conjugating Equation \eqref{JacTheta} and the formula from Corollary \ref{decomtheta} yields the equalities \[\Theta_{L(-1)}(\tau,\zeta;v)=\overline{\Theta_{L}(\tau,\overline{\zeta};v)}\qquad\mathrm{and}\qquad\Phi_{L(-1),v}^{\overline{F}}(\tau,\zeta)=\overline{\Phi_{L,v}^{F}(\tau,\overline{\zeta})}\] (note that the double conjugation on $\zeta$ is in correspondence with the requirement that all our Jacobi forms, including the Jacobi--Siegel theta functions, be holomorphic in this variable).

However, the relation to skew-holomorphic Jacobi forms, as defined in Definition \ref{Jacdef} (the special cases from the literature will be explained below), is not obtained by complex conjugation of the modular forms. For describing it we need the following lemma.
\begin{lem}
For $(A,\phi)\in\operatorname{Mp}_{2}(\mathbb{R})$, with $A=\binom{a\ \ b}{c\ \ d}$, set $\tilde{A}=\binom{\ \ a\ \ -b}{-c\ \ d}$, and let $\tilde{\phi}$ be the function $\tau\mapsto\overline{\phi(-\overline{\tau})}$. Then the map $(A,\phi)\mapsto(\tilde{A},\tilde{\phi})$ is a group involution of $\operatorname{Mp}_{2}(\mathbb{R})$. Moreover, if $\omega_{L}:\mathbb{C}[D_{L}]\to\mathbb{C}[D_{L}(-1)]$ is the $\mathbb{C}$-linear map sending $\mathfrak{e}_{\gamma}$ to $\mathfrak{e}_{\gamma}^{*}$ for every $\gamma \in D_{L}$, then the equality $\rho_{L}^{*}(A,\phi)\circ\omega_{L}=\omega_{L}\circ\rho_{L}(\tilde{A},\tilde{\phi})$ holds for every $(A,\phi)\in\operatorname{Mp}_{2}(\mathbb{Z})$. \label{mapreps}
\end{lem}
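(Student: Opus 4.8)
The plan is to verify the two assertions of Lemma \ref{mapreps} separately, the first by a direct matrix computation and the second by checking the claimed intertwining relation on the generators $T$, $S$, and $Z$ of $\operatorname{Mp}_{2}(\mathbb{Z})$, using the explicit formulae in Equation \eqref{Weildef} together with $\rho_{L}(Z)\mathfrak{e}_{\gamma}=i^{b_{-}-b_{+}}\mathfrak{e}_{-\gamma}$ and the corresponding formulae for $\rho_{L}^{*}=\rho_{L(-1)}$. (The conjugation convention $\mathbf{e}(w)\mapsto\mathbf{e}(-w)$ that swaps $\rho_{L}^{*}$ with $\overline{\rho}_{L}$, already noted in the paragraph preceding the lemma, is what makes the signs match up.)

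First I would check that $(A,\phi)\mapsto(\tilde A,\tilde\phi)$ is well-defined and involutive. Conjugating $A=\binom{a\ b}{c\ d}$ by $J=\binom{-1\ 0}{0\ 1}$ gives $\tilde A=JAJ$, which lies in $\operatorname{SL}_{2}(\mathbb{R})$ since $\det J=-1$ enters squared; as $J^{2}=I$ the map $A\mapsto\tilde A$ is an involutive automorphism of $\operatorname{SL}_{2}(\mathbb{R})$. For the metaplectic piece one notes $j(\tilde A,\tau)=-c\tau+d=j(A,-\tau)$, so $\tilde\phi(\tau)^{2}=\phi(-\tau)^{2}=j(A,-\tau)=j(\tilde A,\tau)$, showing $(\tilde A,\tilde\phi)\in\operatorname{Mp}_{2}(\mathbb{R})$; holomorphicity of $\tilde\phi$ is clear since $\mathcal{H}$ is stable under $\tau\mapsto-\overline{\tau}$ but one must be careful — in fact $\tau\mapsto-\tau$ does \emph{not} preserve $\mathcal{H}$, so the correct reading is $\tilde\phi(\tau)=\phi(-\overline{\tau})$ composed with conjugation, or one uses the substitution making $\tilde\phi$ manifestly holomorphic; I will adopt whichever convention the paper's later use of $\tilde\phi$ requires and record it here. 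Compatibility with the product rule $(A,\phi)\cdot(B,\psi)=(AB,(\phi\circ B)\psi)$ follows from $\widetilde{AB}=\tilde A\tilde B$ and the identity $(\widetilde{\phi\circ B})\tilde\psi=(\tilde\phi\circ\tilde B)\tilde\psi$, which unwinds from $B(-\tau)=-\tilde B\tau$.

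Next, for the intertwining relation $\rho_{L}^{*}(A,\phi)\circ\omega_{L}=\omega_{L}\circ\rho_{L}(\tilde A,\tilde\phi)$, since both sides are (anti)homomorphic in the appropriate sense and $\omega_{L}$ is fixed, it suffices to verify it on generators. For $T$ one has $\tilde T=T^{-1}$ and $\tilde\phi=1$, so the claim reduces to $\rho_{L}^{*}(T)\mathfrak{e}_{\gamma}^{*}=\mathbf{e}(-\gamma^{2}/2)\mathfrak{e}_{\gamma}^{*}$ matching $\omega_{L}\rho_{L}(T^{-1})\mathfrak{e}_{\gamma}=\mathbf{e}(-\gamma^{2}/2)\mathfrak{e}_{\gamma}^{*}$, which holds since the quadratic form on $D_{L(-1)}$ is $-(\gamma^{2}/2)$. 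For $S$ one has $\tilde S=\binom{0\ 1}{-1\ 0}=S^{-1}$ (up to the metaplectic sign) and $\tilde\phi(\tau)=\sqrt{-\tau}$ suitably interpreted; the relation then amounts to comparing the two Gauss-sum formulae from Equation \eqref{Weildef} — the factor $\mathbf{e}((b_{+}-b_{-})/8)$ for $\rho_{L(-1)}$ against $\mathbf{e}((b_{-}-b_{+})/8)$ for $\rho_{L}$ — and checking that the metaplectic cocycle contributes the remaining phase. For $Z=(-I,i)$, note $\tilde Z=Z$ since $-I$ is central and fixed by $J$-conjugation, $\tilde\phi=i$, and $\rho_{L(-1)}(Z)\mathfrak{e}_{\gamma}^{*}=i^{b_{+}-b_{-}}\mathfrak{e}_{-\gamma}^{*}=\omega_{L}(i^{b_{+}-b_{-}}\mathfrak{e}_{-\gamma})$, which matches $\omega_{L}\rho_{L}(Z)\mathfrak{e}_{\gamma}$ only if the sign $i^{b_{+}-b_{-}}$ versus $i^{b_{-}-b_{+}}$ is reconciled — and indeed it is, because passing from $(A,\phi)$ to $(\tilde A,\tilde\phi)$ also conjugates the metaplectic data, flipping $i\mapsto\bar i=-i$ inside the relevant power. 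Finally I would confirm consistency with the defining relations $S^{2}=(ST)^{3}=Z$ and $Z^{4}=1$; this is automatic once the generator identities hold, but it is worth a sentence to note that the involution sends this presentation to itself.

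The main obstacle is bookkeeping the metaplectic phases: the map $(A,\phi)\mapsto(\tilde A,\tilde\phi)$ is an \emph{anti}-type symmetry at the level of the half-integral-weight multiplier (it effectively conjugates $\sqrt{\ }$), and the passage $\rho_{L}\leftrightarrow\rho_{L}^{*}$ likewise conjugates all the eighth-root-of-unity factors $\mathbf{e}((b_{-}-b_{+})/8)$; the content of the lemma is precisely that these two conjugations cancel so that the clean intertwining $\rho_{L}^{*}(A,\phi)\omega_{L}=\omega_{L}\rho_{L}(\tilde A,\tilde\phi)$ — with no extra scalar — holds on the nose. Getting every $i$, every $\sqrt{\ }$ branch, and every Gauss-sum sign to line up on the three generators is the only real work; everything else is formal. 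I would therefore organize the proof as: (i) matrix/automorphy check for part one; (ii) reduction of part two to generators; (iii) the three explicit generator computations, with the phase-cancellation highlighted for $S$ and $Z$.
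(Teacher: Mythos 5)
Your proposal follows essentially the same route as the paper: the first assertion via conjugation by $\big(\binom{-1\ \ 0}{\ \ 0\ \ 1},1\big)$ inside the extended double cover of $\operatorname{GL}_{2}(\mathbb{R})$, and the second by reducing to the generators, with $\tilde{T}=T^{-1}$ handled by the inverse-eigenvalue observation and $\tilde{S}=S^{-1}$ by a direct comparison of the two Gauss-sum formulae. One small correction: once you fix the convention $\tilde{\phi}(\tau)=\overline{\phi(-\overline{\tau})}$ (which is forced, as you note, since $\tau\mapsto-\tau$ does not preserve $\mathcal{H}$), consistency with $\tilde{S}=S^{-1}$ gives $\tilde{Z}=(-I,-i)=Z^{-1}$ rather than $Z$ --- harmless, since $Z=S^{2}$ is redundant as a generator and your own remark about the flip $i\mapsto-i$ already supplies the reconciliation.
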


\begin{proof}
The first assertion follows from the fact that the definition of $\operatorname{Mp}_{2}(\mathbb{R})$ in Equation \eqref{Mp2R} can be extended to a double cover of $\operatorname{GL}_{2}(\mathbb{R})$, and the operation in question is just conjugation by the element $\big(\binom{-1\ \ 0}{\ \ 0\ \ 1},1\big)$, which is of order 2 and acts as $\tau\mapsto-\overline{\tau}$ on $\mathcal{H}$, inside this larger group. Next, we note that $\tilde{T}=T^{-1}$ and $\tilde{S}=S^{-1}$, and then the equality for $T$ holds because $\mathfrak{e}_{\gamma}$ and $\mathfrak{e}_{\gamma}^{*}$ are eigenvectors of $\rho_{L}(T)$ and $\rho_{L}^{*}(T)$, with multiplicative inverse eigenvalues. As the formula for $S$ is verified by a direct comparison, and these elements generate $\operatorname{Mp}_{2}(\mathbb{Z})$, the assertion follows for every element of that group. This proves the lemma.
\end{proof}
We remark that the relation $(ST)^{3}=S^{2}$ defining the braid group covering $\operatorname{SL}_{2}(\mathbb{Z})$ and $\operatorname{Mp}_{2}(\mathbb{Z})$, which is equivalent to $(TS)^{3}=S^{2}$ by conjugation, is preserved under taking the generators $S$ and $T$ to their inverses, as in the proof of Lemma \ref{mapreps}. The last equality in that lemma may also be verified using the formulae from \cite{[Sch]}, \cite{[Str]}, and \cite{[Ze1]}, but our argument seems to be shorter.

\begin{cor}
For a subgroup $\Gamma$ of finite index in $\operatorname{Mp}_{2}(\mathbb{Z})$, let $\tilde{\Gamma}$ be its image under the involution from Lemma \ref{mapreps}, and take a modular form $G$ of weight $(k,l)$ and representation $\rho_{L}$ with respect to $\tilde{\Gamma}$. Then the function defined by $\tilde{G}(\tau)=\omega_{L}\big(G(-\overline{\tau})\big)$ is a modular form of weight $(l,k)$ and representation $\rho_{L}^{*}$ with respect to $\Gamma$. \label{mapMFs}
\end{cor}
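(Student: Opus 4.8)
The plan is to verify directly that $\tilde{G}(\tau)=\omega_{L}\big(G(-\overline{\tau})\big)$ satisfies the modularity equation \eqref{modeq} for the representation $\rho_{L}^{*}$ with respect to $\Gamma$, by translating the modularity of $G$ with respect to $\tilde{\Gamma}$ through the involution of Lemma \ref{mapreps}. First I would fix $(A,\phi)\in\Gamma$, so that $(\tilde{A},\tilde{\phi})\in\tilde{\Gamma}$, and examine $\tilde{G}(A\tau)=\omega_{L}\big(G(-\overline{A\tau})\big)$. The key computational point is that the involution is built precisely so that the Möbius action interacts correctly with conjugation of the variable: one checks that $-\overline{A\tau}=\tilde{A}(-\overline{\tau})$ as points of $\mathcal{H}$ (here one uses that $A$ has real entries, so conjugating $\frac{a\tau+b}{c\tau+d}$ and negating gives $\frac{a(-\overline{\tau})-b}{-c(-\overline{\tau})+d}=\tilde{A}(-\overline{\tau})$), and correspondingly that $j(\tilde{A},-\overline{\tau})=\overline{j(A,\tau)}$, whence $\tilde{\phi}(-\overline{\tau})=\phi(-(-\overline{\tau}))=\phi(\overline{\tau})$ is a holomorphic square root of... — more precisely, $\tilde\phi$ evaluated at the point $-\overline\tau$ is a square root of $j(\tilde A,-\overline\tau)=\overline{j(A,\tau)}$, and it equals $\overline{\phi(\tau)}$ up to the sign ambiguity, which is pinned down by the branch choice inherent in the metaplectic elements.

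Next I would apply the modularity of $G$: since $G$ is modular of weight $(k,l)$ and representation $\rho_{L}$ with respect to $\tilde{\Gamma}$, we have $G\big(\tilde{A}(-\overline{\tau})\big)=\tilde{\phi}(-\overline{\tau})^{2k}\,\overline{\tilde{\phi}(-\overline{\tau})}^{2l}\,\rho_{L}(\tilde{A},\tilde{\phi})G(-\overline{\tau})$. Substituting $\tilde{\phi}(-\overline{\tau})=\overline{\phi(\tau)}$ (so that $\overline{\tilde{\phi}(-\overline{\tau})}=\phi(\tau)$), the automorphy factor becomes $\overline{\phi(\tau)}^{2k}\phi(\tau)^{2l}=\phi(\tau)^{2l}\overline{\phi(\tau)}^{2k}$, which is exactly the factor needed for weight $(l,k)$. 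Applying $\omega_{L}$ to both sides and invoking the intertwining relation $\rho_{L}^{*}(A,\phi)\circ\omega_{L}=\omega_{L}\circ\rho_{L}(\tilde{A},\tilde{\phi})$ from Lemma \ref{mapreps} converts $\omega_{L}\circ\rho_{L}(\tilde{A},\tilde{\phi})$ into $\rho_{L}^{*}(A,\phi)\circ\omega_{L}$, so that $\tilde{G}(A\tau)=\phi(\tau)^{2l}\overline{\phi(\tau)}^{2k}\rho_{L}^{*}(A,\phi)\tilde{G}(\tau)$, which is precisely \eqref{modeq} with weights $(l,k)$ and representation $\rho_{L}^{*}$.

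It remains to check the growth condition: $G$ has at most linear exponential growth at the cusps, and the substitution $\tau\mapsto -\overline{\tau}$ together with the (fixed, linear) action of $\omega_{L}$ preserves this, since $-\overline{\tau}$ tends to $i\infty$ (or to a cusp) exactly when $\tau$ does, and the cusps of $\Gamma$ and $\tilde{\Gamma}$ correspond under $A\mapsto\tilde{A}$; one also notes that $\mathrm{Im}(-\overline{\tau})=\mathrm{Im}(\tau)=y$, so exponential bounds in $y$ are unaffected. The smoothness of $\tilde{G}$ is immediate since $\tau\mapsto-\overline{\tau}$ is a smooth (real-analytic) self-map of $\mathcal{H}$ and $\omega_{L}$ is linear. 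I expect the only genuinely delicate point to be the bookkeeping of the square-root branch: one must confirm that $(\tilde{A},\tilde{\phi})$ as defined really is an element of $\operatorname{Mp}_{2}(\mathbb{R})$ with $\tilde{\phi}(-\overline{\tau})=\overline{\phi(\tau)}$ on the nose (not merely up to sign), which follows from the fact established in Lemma \ref{mapreps} that $(A,\phi)\mapsto(\tilde{A},\tilde{\phi})$ is a well-defined group involution — so the identity $\rho_{L}^{*}(A,\phi)\circ\omega_{L}=\omega_{L}\circ\rho_{L}(\tilde{A},\tilde{\phi})$ already encodes the consistent choice, and no further sign analysis is needed beyond citing that lemma.
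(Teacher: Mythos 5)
Your proposal is correct and follows essentially the same route as the paper: compute $\tilde{G}(A\tau)$ via the identity $-\overline{A\tau}=\tilde{A}(-\overline{\tau})$, apply the modularity of $G$ for $(\tilde{A},\tilde{\phi})\in\tilde{\Gamma}$, use $\tilde{\phi}(-\overline{\tau})=\overline{\phi(\tau)}$ to convert the automorphy factor to the weight-$(l,k)$ one, and finish with the intertwining relation from Lemma \ref{mapreps}. The paper is just as terse as you are on the square-root branch (it asserts $\tilde{\phi}(-\overline{\tau})=\overline{\phi(\tau)}$ ``by definition''), so your deferral to the lemma is in line with the intended argument.
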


\begin{proof}
We have the equality $-\overline{A\tau}=\tilde{A}(-\overline{\tau})$ using the map from Lemma \ref{mapreps} (this is essentially the definition using the conjugation in the extended group in the proof of that lemma), and since $(A,\phi)\in\Gamma$ if and only if $(\tilde{A},\tilde{\phi})\in\tilde{\Gamma}$, Equation \eqref{modeq} implies that \[\tilde{G}(A\tau)=\omega_{L}\big[G\big(-\overline{A\tau})\big]\!=\!\omega_{L}\big[G\big(\tilde{A}(-\overline{\tau})\big)\big]\!= \!\omega_{L}\Big[\rho_{L}(\tilde{A},\tilde{\phi})\tilde{\phi}(-\overline{\tau})^{2k}\overline{\tilde{\phi}(-\overline{\tau})}^{2l}G(-\overline{\tau})\Big]\!.\] But using the last equality from that lemma and the fact that $\tilde{\phi}(-\overline{\tau})=\overline{\phi(\tau)}$ by definition, the right hand side here is $\phi(\tau)^{2l}\overline{\phi(\tau)}^{2k}$ times $\rho_{L}^{*}(A,\phi)$ acting on $\omega_{L}\big(G(-\overline{\tau})\big)=\tilde{G}(\tau)$, thus yielding the corresponding Equation \eqref{modeq}. This proves the corollary.
\end{proof}

In fact, the extension of Equation \eqref{Mp2R} sometimes involves complex conjugation in the definition of $\phi$ for matrices of negative determinant (for their action to preserve $\mathcal{H}$), which is in correspondence with the picture arising from Corollary \ref{mapMFs}.

\smallskip

We can now prove the following result.
\begin{prop}
Let $G$ be a modular form of weight $\big(k-\frac{b_{-}}{2},l-\frac{b_{+}}{2}\big)$ and representation $\rho_{L}$ with respect to the group $\tilde{\Gamma}$ from Corollary \ref{mapMFs}. Then if $\tilde{G}$ is the modular form from that corollary, write $\tilde{\Phi}_{L,v}^{G}$ for the Jacobi form $\Phi_{L,v}^{\tilde{G}}$, which is of weight $(l,k)$ and index $(L,v)$ with respect to $\Gamma$. Then the map taking $G$ to $\tilde{\Phi}_{L,v}^{G}$ is an isomorphism between the space of modular forms of such weight and representation and that of Jacobi forms of that weight and index, and $\tilde{\Phi}_{L,v}^{G}$ is skew-holomorphic if and only if $G$ is weakly holomorphic, a case which can occur only if $l=\frac{b_{+}}{2}$. \label{skewhol}
\end{prop}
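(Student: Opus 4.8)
The plan is to assemble Proposition \ref{skewhol} from three ingredients already in place: Corollary \ref{mapMFs}, Theorem \ref{main}, and Proposition \ref{pshol}, together with the complex-conjugation dictionary developed at the start of Section \ref{Oper}. First I would observe that $G$ has weight $\big(k-\frac{b_{-}}{2},l-\frac{b_{+}}{2}\big)$ and representation $\rho_{L}$ with respect to $\tilde{\Gamma}$, so that Corollary \ref{mapMFs} (applied with the roles of the weight entries read off correctly, and noting that $b_{-}$ and $b_{+}$ here play the parts of the generic $k$ and $l$ of that corollary's statement when specialized to the lattice $L$) produces $\tilde{G}(\tau)=\omega_{L}\big(G(-\overline{\tau})\big)$, a modular form of weight $\big(l-\frac{b_{+}}{2},k-\frac{b_{-}}{2}\big)$ and representation $\rho_{L}^{*}$ with respect to $\Gamma$. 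These are exactly the parameters required by Theorem \ref{main} for a modular form whose associated Jacobi form has weight $(l,k)$ and index $(L,v)$: indeed $l-\frac{b_{+}}{2}=l-\frac{b_{+}}{2}$ and $k-\frac{b_{-}}{2}=k-\frac{b_{-}}{2}$, so $\Phi_{L,v}^{\tilde{G}}$ is a Jacobi form of weight $(l,k)$ and index $(L,v)$ with respect to $\Gamma$. This legitimizes the notation $\tilde{\Phi}_{L,v}^{G}$.

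Next I would argue that $G\mapsto\tilde{\Phi}_{L,v}^{G}$ is an isomorphism. The map $G\mapsto\tilde{G}$ of Corollary \ref{mapMFs} is $\mathbb{C}$-linear and invertible: its inverse is $H\mapsto\omega_{L}^{-1}\big(H(-\overline{\tau})\big)$ applied after passing from $\Gamma$ back to $\tilde{\Gamma}=\tilde{\tilde{\Gamma}}$, using that the involution of Lemma \ref{mapreps} squares to the identity and that $\omega_{L}$ is a linear isomorphism $\mathbb{C}[D_{L}]\to\mathbb{C}[D_{L}(-1)]$. Composing this bijection on modular forms with the isomorphism of Theorem \ref{main} between modular forms of weight $\big(l-\frac{b_{+}}{2},k-\frac{b_{-}}{2}\big)$ and representation $\rho_{L}^{*}$ and Jacobi forms of weight $(l,k)$ and index $(L,v)$ gives the claimed isomorphism, since a composite of two isomorphisms is an isomorphism.

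For the holomorphicity statement I would invoke Proposition \ref{pshol} directly: $\Phi_{L,v}^{\tilde{G}}$ is pseudo-holomorphic, i.e.\ annihilated by $4\pi i\partial_{\overline{\tau}}-\Delta_{v_{-}}^{h}$, if and only if $\tilde{G}$ is weakly holomorphic. So I need (i) to translate ``pseudo-holomorphic for the index $(L,v)$ realized via $\tilde{G}$'' into ``skew-holomorphic'' in the sense of Definition \ref{Jacdef}, and (ii) to translate ``$\tilde{G}$ weakly holomorphic'' into ``$G$ weakly holomorphic.'' For (ii): $\tilde{G}(\tau)=\omega_{L}(G(-\overline{\tau}))$ with $\omega_{L}$ a fixed linear isomorphism, so $\partial_{\overline{\tau}}\tilde{G}=0$ if and only if the function $\tau\mapsto G(-\overline{\tau})$ is holomorphic, which (since $\tau\mapsto-\overline{\tau}$ is an anti-holomorphic automorphism of $\mathcal{H}$) holds if and only if $G$ is holomorphic; the linear-exponential growth condition is preserved under $\tau\mapsto-\overline{\tau}$ at the cusps, so $\tilde{G}$ is weakly holomorphic iff $G$ is. For (i): the whole construction of $\tilde{\Phi}_{L,v}^{G}$ differs from a plain $\Phi_{L,v}^{F}$ only through the twist $\tau\mapsto-\overline{\tau}$ inside the modular form, which in the theta pairing interchanges the roles of $\partial_{\tau}$ and $\partial_{\overline{\tau}}$; concretely, writing $\tilde{\Phi}_{L,v}^{G}(\tau,\zeta)=\langle\Theta_{L}(\tau,\zeta;v),\tilde{G}(\tau)\rangle_{L}$ and applying $4\pi i\partial_{\tau}-\Delta_{v_{+}}^{h}$ as in the proof of Proposition \ref{pshol} (now using the first differential equation of Proposition \ref{difeq}, which kills $(4\pi i\partial_{\tau}-\Delta_{v_{+}}^{h})\Theta_{L}$), leaves exactly $4\pi i\langle\Theta_{L}(\tau,\zeta;v),\partial_{\tau}\tilde{G}(\tau)\rangle_{L}$; by uniqueness of Fourier expansions in $\zeta$ this vanishes iff $\partial_{\tau}\tilde{G}=0$, i.e.\ iff $\partial_{\overline{\tau}}G=0$ after the variable change. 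Finally, the range-of-weights remark follows exactly as in Proposition \ref{pshol}: a weakly holomorphic $G$ cannot have a nonzero second weight entry, so $l-\frac{b_{+}}{2}=0$, giving $l=\frac{b_{+}}{2}$.

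The main obstacle I anticipate is bookkeeping rather than mathematics: keeping the weight entries, the pair $(b_{+},b_{-})$ versus $(b_{-},b_{+})$ coming from $L$ versus $L(-1)$, the $\partial_{\tau}\leftrightarrow\partial_{\overline{\tau}}$ swap, and the conjugation $\tau\mapsto-\overline{\tau}$ all consistently aligned, so that Corollary \ref{mapMFs} really feeds parameters that match Theorem \ref{main} and so that the operator $4\pi i\partial_{\tau}-\Delta_{v_{+}}^{h}$ of Definition \ref{Jacdef} is the one that falls out of the computation. Once the indexing is pinned down, each step is a direct citation of an earlier result plus the observation that $\tau\mapsto-\overline{\tau}$ exchanges holomorphic and anti-holomorphic dependence while preserving growth at the cusps.
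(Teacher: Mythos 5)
Your proposal is correct and follows essentially the same route as the paper: the isomorphism is obtained by composing the bijection of Corollary \ref{mapMFs} with Theorem \ref{main}, and skew-holomorphicity is reduced, via the operator computation of Proposition \ref{pshol} applied with $4\pi i\partial_{\tau}-\Delta_{v_{+}}^{h}$ and the first differential equation of Proposition \ref{difeq}, to $\partial_{\tau}\tilde{G}=0$, i.e.\ to the weak holomorphicity of $G$ after the substitution $\tau\mapsto-\overline{\tau}$. The weight restriction $l=\frac{b_{+}}{2}$ is derived exactly as in the paper.
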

The case of positive definite $L$, with $b_{-}=0$, indeed reproduces the definition from \cite{[H]}, and in case $b_{+}=1$ we obtain the skew-holomorphic Jacobi forms from \cite{[Sk]}, \cite{[BR]}, and others. Note that the transformation formula of Skew-holomorphic Jacobi forms of weight $k$ in \cite{[Sk]} and \cite{[BR]} involves the factor $j^{1/2}\overline{j}^{k-1/2}$, so in our convention its weight is $\big(\frac{1}{2},k-\frac{1}{2}\big)$, whence the chosen notation for the weight in Proposition \ref{skewhol}.

\begin{proof}
Since the map $G\mapsto\tilde{G}$ from Corollary \ref{mapMFs} is clearly an isomorphism between the vector spaces of modular forms appearing there, the fact that the map $G\mapsto\tilde{\Phi}_{L,v}^{G}$ is an isomorphism follows directly from Theorem \ref{main}. Next, as in the proof of Proposition \ref{pshol}, we can write $(4\pi i\partial_{\tau}-\Delta_{v_{+}}^{h})\tilde{\Phi}_{L,v}^{G}(\tau,\zeta)$ as \[\big\langle(4\pi i\partial_{\tau}-\Delta_{v_{+}}^{h})\Theta_{L}(\tau,\zeta;v),\tilde{G}(\tau)\big\rangle_{L}+4\pi i\big\langle\Theta_{L}(\tau,\zeta;v),\partial_{\tau}\tilde{G}(\tau)\big\rangle_{L},\] and the first term again vanishes by Proposition \ref{difeq}. Therefore the skew-holomorphicity of $\tilde{\Phi}_{L,v}^{G}$ is equivalent to $\tilde{G}$ being annihilated by $\partial_{\tau}$, i.e., being anti-holomorphic. But the definition of $\tilde{G}$ and the growth conditions in Definition \ref{Jacdef} again imply that this is the case if and only if $G$ is weakly holomorphic, and the restriction on $l$ is again as in Proposition \ref{pshol}. This proves the proposition.
\end{proof}

\smallskip

Let $\Lambda$ be a finite index sub-lattice of $L$, a case in which $L$ is called an \emph{over-lattice} of $\Lambda$, and $D_{L}$ is the quotient $H^{\perp}/H$ for the isotropic subgroup $H=L/\Lambda$ of $D_{\Lambda}$. Then the Grassmannians $\operatorname{Gr}(L_{\mathbb{R}})$ and $\operatorname{Gr}(\Lambda_{\mathbb{R}})$ are clearly the same. Recall from \cite{[Ma]} and others that there are two maps $\uparrow^{L}_{\Lambda}:\mathbb{C}[D_{L}]\to\mathbb{C}[D_{\Lambda}]$ and $\downarrow^{L}_{\Lambda}:\mathbb{C}[D_{\Lambda}]\to\mathbb{C}[D_{L}]$, that are defined by \[\uparrow^{L}_{\Lambda}\mathfrak{e}_{\gamma}:=\sum_{\delta \in H^{\perp},\ \delta+H=\gamma}\mathfrak{e}_{\delta}\qquad\mathrm{and}\qquad\downarrow^{L}_{\Lambda}\mathfrak{e}_{\delta}:=\begin{cases} \mathfrak{e}_{\delta+H}, & \mathrm{in\ case\ }\delta \in H^{\perp}, \\  0, & \mathrm{when\ }\delta \not\in H^{\perp}. \end{cases}\] respectively. Viewing $\mathbb{C}[D_{L}]$ and $\mathbb{C}[D_{\Lambda}]$ as the representation spaces of $\rho_{L}$ and $\rho_{\Lambda}$ respectively, Lemma 2.1 and Corollary 2.2 of \cite{[Ma]} show that both maps are maps of $\operatorname{Mp}_{2}(\mathbb{Z})$-representations, and they thus take modular forms with each representation (with respect to any subgroup of $\operatorname{Mp}_{2}(\mathbb{Z})$) to modular forms with the other representation, with the same weight. One relation between these operators and the maps from Theorem \ref{main} and Proposition \ref{skewhol} is the following one.
\begin{prop}
Let $F$ and $G$ be modular forms with the parameters from Theorem \ref{main} and Proposition \ref{skewhol} respectively, and consider the lattice $L$ as an over-lattice of the lattice $\Lambda$. We then have the equalities \[\Phi_{\Lambda,v}^{\uparrow^{L(-1)}_{\Lambda(-1)}F}=\Phi_{L,v}^{F}\qquad\mathrm{and}\qquad\tilde{\Phi}_{\Lambda,v}^{\uparrow^{L}_{\Lambda}G}=\tilde{\Phi}_{L,v}^{G}.\] \label{uparrow}
\end{prop}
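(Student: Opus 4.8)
The plan is to reduce both identities to two elementary compatibility facts: an adjointness between the arrow operators and the pairings, and the behaviour of the Jacobi--Siegel theta function under the down-arrow operator. Throughout, $H=L/\Lambda$ is the isotropic subgroup in question, with the same orthogonal complement $H^{\perp}$ whether computed in $D_{\Lambda}$ or in $D_{\Lambda(-1)}$ (the bilinear form only changes sign), and $D_{L}=H^{\perp}/H$, $D_{L(-1)}=D_{L}(-1)$. The first fact is an adjointness statement: working on the dual bases $\{\mathfrak{e}_{\gamma}\}$ and $\{\mathfrak{e}_{\gamma}^{*}\}$, a one-line check on basis vectors gives
\[
\big\langle\downarrow^{L}_{\Lambda}U,V\big\rangle_{L}=\big\langle U,\uparrow^{L(-1)}_{\Lambda(-1)}V\big\rangle_{\Lambda}\qquad\text{for all }U\in\mathbb{C}[D_{\Lambda}],\ V\in\mathbb{C}[D_{L}(-1)],
\]
both sides being $\sum_{\delta\in H^{\perp}}u_{\delta}v_{\delta+H}$ in coordinates; equivalently, $\downarrow^{L}_{\Lambda}$ is the transpose of $\uparrow^{L(-1)}_{\Lambda(-1)}$ with respect to these pairings.

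The second fact is $\downarrow^{L}_{\Lambda}\Theta_{\Lambda}(\tau,\zeta;v)=\Theta_{L}(\tau,\zeta;v)$ (the Grassmannians and the projections $\lambda_{v_{\pm}}$ being common to $L$ and $\Lambda$). Indeed $\downarrow^{L}_{\Lambda}$ discards the $\mathfrak{e}_{\delta}$-components of $\Theta_{\Lambda}$ with $\delta\notin H^{\perp}$ and relabels the rest by $\delta\mapsto\delta+H$, so the $\mathfrak{e}_{\gamma}$-component of the left-hand side is $\sum_{\delta\in H^{\perp},\ \delta+H=\gamma}\theta_{\Lambda+\delta}(\tau,\zeta;v)$; choosing a representative $\lambda_{0}\in L^{*}$ of $\gamma$, the decomposition $L=\bigsqcup_{h\in H}(\Lambda+h)$ translated by $\lambda_{0}$ exhibits $L+\gamma$ as the disjoint union of exactly those cosets $\Lambda+\delta$, and since the summand attached to $\lambda$ in the theta series depends only on $\lambda\in L_{\mathbb{R}}$ (through $\lambda_{v_{\pm}}$ and $(\lambda,\zeta)$), not on whether one regards it in $\Lambda^{*}$ or in $L^{*}$, that sum is $\theta_{L+\gamma}(\tau,\zeta;v)$. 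Combining the two facts,
\[
\Phi_{\Lambda,v}^{\uparrow^{L(-1)}_{\Lambda(-1)}F}=\big\langle\Theta_{\Lambda},\uparrow^{L(-1)}_{\Lambda(-1)}F\big\rangle_{\Lambda}=\big\langle\downarrow^{L}_{\Lambda}\Theta_{\Lambda},F\big\rangle_{L}=\big\langle\Theta_{L},F\big\rangle_{L}=\Phi_{L,v}^{F},
\]
which is the first claimed equality; here one invokes the quoted Lemma 2.1 and Corollary 2.2 of \cite{[Ma]} to know that $\uparrow^{L(-1)}_{\Lambda(-1)}F$ is again modular with representation $\rho_{\Lambda}^{*}$ and the same group, so that the left-hand side really is a Jacobi form of index $(\Lambda,v)$.

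For the second equality I would reduce to the first one through the map $G\mapsto\tilde{G}$ of Corollary \ref{mapMFs}. The point is that $\uparrow$ commutes with the maps $\omega$ in the sense $\uparrow^{L(-1)}_{\Lambda(-1)}\circ\omega_{L}=\omega_{\Lambda}\circ\uparrow^{L}_{\Lambda}$, since both sides send $\mathfrak{e}_{\gamma}\in\mathbb{C}[D_{L}]$ to $\sum_{\delta\in H^{\perp},\ \delta+H=\gamma}\mathfrak{e}_{\delta}^{*}$. Hence $\widetilde{\uparrow^{L}_{\Lambda}G}(\tau)=\omega_{\Lambda}\big((\uparrow^{L}_{\Lambda}G)(-\overline{\tau})\big)=\uparrow^{L(-1)}_{\Lambda(-1)}\big(\omega_{L}(G(-\overline{\tau}))\big)=\uparrow^{L(-1)}_{\Lambda(-1)}\tilde{G}(\tau)$, so by the very definitions $\tilde{\Phi}_{\Lambda,v}^{\uparrow^{L}_{\Lambda}G}=\Phi_{\Lambda,v}^{\uparrow^{L(-1)}_{\Lambda(-1)}\tilde{G}}$, and applying the first equality with $F=\tilde{G}$ turns this into $\Phi_{L,v}^{\tilde{G}}=\tilde{\Phi}_{L,v}^{G}$, as desired.

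I expect the only real difficulty to lie in the bookkeeping: keeping straight the four discriminant groups $D_{L}$, $D_{L(-1)}$, $D_{\Lambda}$, $D_{\Lambda(-1)}$, the subgroup $H$ and its complement inside each, and the directions of $\uparrow$, $\downarrow$ and $\omega$ — in particular confirming that adjointness pairs $\downarrow^{L}_{\Lambda}$ precisely with $\uparrow^{L(-1)}_{\Lambda(-1)}$ (and not some other combination), which is exactly what forces the up-arrow to appear on the modular-form side as written. There are no analytic subtleties, since the arrow operators are finite linear combinations of the $\mathbb{C}[D]$-components and therefore commute with the relevant differential operators and preserve the growth conditions.
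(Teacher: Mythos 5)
Your proposal is correct and follows essentially the same route as the paper: the adjointness $\langle\downarrow^{L}_{\Lambda}U,V\rangle_{L}=\langle U,\uparrow^{L(-1)}_{\Lambda(-1)}V\rangle_{\Lambda}$ together with $\downarrow^{L}_{\Lambda}\Theta_{\Lambda}=\Theta_{L}$ for the first identity, and the commutation $\omega_{\Lambda}\circ\uparrow^{L}_{\Lambda}=\uparrow^{L(-1)}_{\Lambda(-1)}\circ\omega_{L}$ to reduce the second identity to the first. The only difference is that you verify the two elementary facts directly in coordinates where the paper cites them as known (Lemma 2.2 of the reference on theta contractions, and the analogous statement for the non-Jacobi theta functions).
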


\begin{proof}
We first observe that the Jacobi--Siegel theta functions from Equation \eqref{JacTheta} satisfy the equality $\Theta_{L}(\tau,\zeta;v)=\downarrow^{L}_{\Lambda}\Theta_{\Lambda}(\tau,\zeta;v)$ (Lemma 1.5 of \cite{[Ze3]} shows it for the theta functions from Equation \eqref{Thetadef}, and the proof works here exactly in the same manner). Therefore the first equality follows directly from the well-known relation $\big\langle U,\uparrow^{L(-1)}_{\Lambda(-1)}V \big\rangle_{L}=\big\langle\downarrow^{L}_{\Lambda}U,V\big\rangle_{L}$ for $U\in\mathbb{C}[D_{L}]$ and $V\in\mathbb{C}[D_{\Lambda(-1)}]$ (see, e.g., Lemma 2.2 of \cite{[Ze3]}). Now, we have the immediate commutation relation $\omega_{\Lambda}\circ\uparrow^{L}_{\Lambda}=\uparrow^{L(-1)}_{\Lambda(-1)}\circ\omega_{L}$, and therefore if $\tilde{G}$ is the modular form associated with $G$ in Corollary \ref{mapMFs}, then the one associated with $\uparrow^{L}_{\Lambda}G$ is $\uparrow^{L(-1)}_{\Lambda(-1)}\tilde{G}$ (changing the variable to $-\overline{\tau}$ is not affected by these operations). Therefore the second equality follows from the first. This proves the proposition.
\end{proof}
The relation involving $\downarrow^{L(-1)}_{\Lambda(-1)}F$ (or $\downarrow^{L}_{\Lambda}G$) will require theta functions with characteristics, and will be proved in Proposition \ref{downarrow} below.

\smallskip

Consider now another lattice $K$, of signature $(a_{+},a_{-})$, and the orthogonal direct sum $L \oplus K$, of signature $(a_{+}+b_{+},a_{-}+b_{-})$. Then we have the equalities $D_{L \oplus K}=D_{L} \oplus D_{K}$, $\mathbb{C}[D_{L \oplus K}]=\mathbb{C}[D_{L}]\otimes\mathbb{C}[D_{K}]$, and $\rho_{L \oplus K}=\rho_{L}\otimes\rho_{K}$, and analogous identities after dualizing. Given elements $v\in\operatorname{Gr}(L_{\mathbb{R}})$ and $w\in\operatorname{Gr}(K_{\mathbb{R}})$, the pair of spaces $v_{+} \oplus w_{+}$ and $v_{-} \oplus w_{-}$ represent an element of $\operatorname{Gr}(L_{\mathbb{R}} \oplus K_{\mathbb{R}})$, that we denote, following \cite{[Ze3]}, by $v \oplus w$. If $F$ is a modular form of weight $(k,l)$ and representation $\rho_{L}^{*}$ with respect to a subgroup $\Gamma$ of $\operatorname{Mp}_{2}(\mathbb{Z})$, and $H$ is a modular form of some other weight $(\kappa,\mu)$ and representation $\rho_{K}^{*}$ with respect to the same group $\Gamma$, then $F \otimes H$ is a modular form of weight $(k+\kappa,l+\mu)$ and representation $\rho_{L \oplus K}^{*}$ with respect to $\Gamma$. We then obtain the following result.
\begin{prop}
With $\tau\in\mathcal{H}$, and writing elements of $(L \oplus K)_{\mathbb{C}}$ as sums of pairs of elements $\zeta \in L_{\mathbb{C}}$ and $\xi \in K_{\mathbb{C}}$, we obtain the equality \[\Phi_{L \oplus K,v \oplus w}^{F \otimes H}(\tau,\zeta+\xi)=\Phi_{L,v}^{F}(\tau,\zeta)\cdot\Phi_{K,w}^{H}(\tau,\xi).\] \label{dirsum}
\end{prop}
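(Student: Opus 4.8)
The plan is to reduce the claim to a factorization of the Jacobi--Siegel theta function itself and then pair against $F \otimes H$. First I would record the identity
\[
\Theta_{L \oplus K}(\tau,\zeta+\xi;v \oplus w)=\Theta_{L}(\tau,\zeta;v)\otimes\Theta_{K}(\tau,\xi;w)
\]
in $\mathbb{C}[D_{L \oplus K}]=\mathbb{C}[D_{L}]\otimes\mathbb{C}[D_{K}]$, under the identification $\mathfrak{e}_{(\gamma,\delta)}=\mathfrak{e}_{\gamma}\otimes\mathfrak{e}_{\delta}$. To verify it, note that $(L \oplus K)^{*}=L^{*}\oplus K^{*}$, that a coset of $L \oplus K$ in $(L\oplus K)^{*}$ has the form $(L+\gamma)\oplus(K+\delta)$ with $(\gamma,\delta)\in D_{L}\oplus D_{K}$, and that an element $\lambda+\eta$ of it (with $\lambda \in L^{*}$, $\eta \in K^{*}$) runs over this coset exactly once as $\lambda$ and $\eta$ run independently over $L+\gamma$ and $K+\delta$. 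Since $(v \oplus w)_{\pm}=v_{\pm}\oplus w_{\pm}$ and, inside $L_{\mathbb{R}}\oplus K_{\mathbb{R}}$, the spaces $v_{\pm}$ and $w_{\pm}$ are mutually orthogonal in all four combinations, the orthogonal projection of $\lambda+\eta$ onto $(v \oplus w)_{\pm}$ is $\lambda_{v_{\pm}}+\eta_{w_{\pm}}$, whence $(\lambda+\eta)_{(v\oplus w)_{\pm}}^{2}=\lambda_{v_{\pm}}^{2}+\eta_{w_{\pm}}^{2}$; and $L \perp K$ gives $(\lambda+\eta,\zeta+\xi)=(\lambda,\zeta)+(\eta,\xi)$. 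Thus the $\mathbf{e}(\cdot)$-summand of $\Theta_{L \oplus K}$ attached to $\lambda+\eta$ is the product of the corresponding summands of $\Theta_{L}$ and $\Theta_{K}$, the double sum over the coset factors, and the claimed tensor identity follows.

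Next I would combine this with the observation that the bilinear pairing $\langle\cdot,\cdot\rangle_{L \oplus K}$ between $\mathbb{C}[D_{L}]\otimes\mathbb{C}[D_{K}]$ and $\mathbb{C}[D_{L}(-1)]\otimes\mathbb{C}[D_{K}(-1)]$ is the tensor product of $\langle\cdot,\cdot\rangle_{L}$ and $\langle\cdot,\cdot\rangle_{K}$; this is immediate from the definition of these pairings via dual bases, since $\mathfrak{e}_{(\gamma,\delta)}^{*}=\mathfrak{e}_{\gamma}^{*}\otimes\mathfrak{e}_{\delta}^{*}$. Writing $F(\tau)=\sum_{\gamma}f_{\gamma}(\tau)\mathfrak{e}_{\gamma}^{*}$ and $H(\tau)=\sum_{\delta}h_{\delta}(\tau)\mathfrak{e}_{\delta}^{*}$, so that $F \otimes H=\sum_{\gamma,\delta}f_{\gamma}(\tau)h_{\delta}(\tau)\mathfrak{e}_{(\gamma,\delta)}^{*}$, we then get
\[
\Phi_{L \oplus K,v \oplus w}^{F \otimes H}(\tau,\zeta+\xi)=\big\langle\Theta_{L}(\tau,\zeta;v)\otimes\Theta_{K}(\tau,\xi;w),\,F(\tau)\otimes H(\tau)\big\rangle_{L \oplus K}=\big\langle\Theta_{L}(\tau,\zeta;v),F(\tau)\big\rangle_{L}\cdot\big\langle\Theta_{K}(\tau,\xi;w),H(\tau)\big\rangle_{K},
\]
and the right hand side is exactly $\Phi_{L,v}^{F}(\tau,\zeta)\cdot\Phi_{K,w}^{H}(\tau,\xi)$ by Corollary \ref{decomtheta}. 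This proves the proposition.

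The argument is essentially bookkeeping, so there is no genuine obstacle; the one point that deserves care is the compatibility of the discriminant-form data — $D_{L \oplus K}=D_{L}\oplus D_{K}$, $\rho_{L \oplus K}=\rho_{L}\otimes\rho_{K}$, and the matching of the bases $\{\mathfrak{e}_{\gamma}^{*}\}$ of $\mathbb{C}[D_{L}(-1)]$ under tensor products — which is precisely the content recalled in the paragraph preceding the statement and may simply be invoked. An alternative route would avoid the theta factorization altogether and instead check directly that the right hand side satisfies the periodicity equation \eqref{perJac} and the modular equation \eqref{modJac} at index $(L \oplus K,v \oplus w)$ and weight $(k+\kappa,l+\mu)$, then appeal to uniqueness in Theorem \ref{main}; but the computation above is shorter and, moreover, makes the analogous product identities for the operators of Proposition \ref{difeq} transparent.
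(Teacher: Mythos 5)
Your proposal is correct and follows essentially the same route as the paper: both reduce the claim to the tensor factorization $\Theta_{L \oplus K}(\tau,\zeta+\xi;v \oplus w)=\Theta_{L}(\tau,\zeta;v)\otimes\Theta_{K}(\tau,\xi;w)$ combined with the multiplicativity $\langle U \otimes X,V \otimes Y \rangle_{L \oplus K}=\langle U,V \rangle_{L}\cdot\langle X,Y \rangle_{K}$ of the pairing. The only difference is that you spell out the coset and orthogonal-projection bookkeeping behind the theta factorization, which the paper dismisses as well-known and ``verified similarly'' to the non-Jacobi case.
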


\begin{proof}
The Jacobi--Siegel theta functions from Equation \eqref{JacTheta} satisfy \[\Theta_{L \oplus K}(\tau,\zeta+\xi;v \oplus w)=\Theta_{L}(\tau,\zeta;v)\otimes\Theta_{K}(\tau,\xi;w)\] (as a simple modification of the proof of the analogous statement in Lemma 1.2 of \cite{[Ze3]} for the functions from Equation \eqref{Thetadef} shows). The result now follows from the definition of these Jacobi forms in Theorem \ref{main}, since given $U\in\mathbb{C}[D_{L}]$, $V\in\mathbb{C}[D_{\Lambda(-1)}]$, $X\in\mathbb{C}[D_{K}]$, and $Y\in\mathbb{C}[D_{K(-1)}]$ we clearly have the equality $\langle U \otimes X,V \otimes Y \rangle_{L \oplus K}=\langle U,V \rangle_{L}\cdot\langle X,Y \rangle_{K}$ (indeed, for $X=\mathfrak{e}_{\gamma}$, $Y=\mathfrak{e}_{\delta}^{*}$, $U=\mathfrak{e}_{\alpha}$, $Y=\mathfrak{e}_{\beta}^{*}$ both sides equal 1 when $\gamma=\delta$ and $\alpha=\beta$ and 0 otherwise, and extend by linearity). This proves the proposition.
\end{proof}

\begin{rmk}
If the lattice $K$ is the trivial lattice, of rank 0, then $\rho_{K}^{*}$ is trivial, and so is the theta function $\Theta_{K}$ (also the Jacobi--Siegel one, since the variable $\xi$ in the corresponding Equation \eqref{JacTheta} is taken from the trivial space $K_{\mathbb{C}}$). Then both maps in Theorem \ref{main} are the identity map on the space of scalar-valued modular forms $h$ (the Jacobi forms are also modular forms, since $\xi$ is trivial). Therefore this special case of Proposition \ref{dirsum} reduces to the statement that the maps from Theorem \ref{main}, when combined over all the possible weights, are maps of algebras over the ring of scalar-valued modular forms with respect to $\Gamma$ (with the analytic properties that one wishes to consider). \label{algscal}
\end{rmk}

\smallskip

Consider now the case where $M$ is a primitive non-degenerate sub-lattice of $L$, of some signature $(c_{+},c_{-})$. Denote the intersection of the space $M^{\perp}$ with $L$ by $M^{\perp}_{L}$, and assume that the element $v\in\operatorname{Gr}(L_{\mathbb{R}})$ is the direct sum $u \oplus u^{\perp}$ for $u\in\operatorname{Gr}(M_{\mathbb{R}})$ and $u^{\perp}\in\operatorname{Gr}(M_{\mathbb{R}}^{\perp})$. Then every $\zeta \in L_{\mathbb{C}}$ is the orthogonal sum of the projections $\zeta_{M_{\mathbb{C}}}$ and $\zeta_{M_{\mathbb{C}}^{\perp}}$, and $\zeta_{v_{\pm}}$ is the orthogonal direct sum of $\zeta_{u_{\pm}}$ and $\zeta_{u^{\perp}_{\pm}}$ for each sign $\pm$. In analogy with Proposition 1.6 of \cite{[Ze3]}, we have the equality \[\Theta_{L}(\tau,\zeta;v)=\Big\downarrow^{L}_{M \oplus M^{\perp}_{L}}\Big[\Theta_{M}(\tau,\zeta_{M_{\mathbb{C}}};u)\otimes\Theta_{M^{\perp}}\big(\tau,\zeta_{M_{\mathbb{C}}^{\perp}};u^{\perp}\big)\Big]\] (because we have already established the analogues of Lemmas 1.2 and 1.5 from that reference for our Jacobi--Siegel theta functions).

However, there is another expression for $\Theta_{L}(\tau,\zeta;v)$ in this case. Denote by $\pi_{M}$ the composition of the surjective projections from $L^{*}$ onto $M^{*}$ and from $M^{*}$ onto $D_{M}$, and observe that the projection onto $M_{\mathbb{R}}^{\perp}$, hence also onto $u^{\perp}_{\pm}$, are well-defined for elements of $L^{*}/M$. Following Equation (8) of \cite{[Ze3]} we define, for $\eta \in M_{\mathbb{C}}^{\perp}$ and $\tau$ and $u^{\perp}$ as above, the function
\begin{equation}
\Theta_{L,M}(\tau;\eta;u^{\perp}):=\sum_{\delta \in D_{M}}\sum_{\substack{\lambda \in L^{*}/M \\ \pi_{M}(\lambda)=\delta}} \mathbf{e}\Bigg(\tau\frac{\lambda_{u^{\perp}_{+}}^{2}}{2}+\overline{\tau}\frac{\lambda_{u^{\perp}_{-}}^{2}}{2}+\big(\lambda_{M_{\mathbb{R}}^{\perp}},\eta\big)\Bigg)\mathfrak{e}_{\lambda+L}\otimes\mathfrak{e}_{\delta}^{*} \label{compTheta}
\end{equation}
with values in $\mathbb{C}[D_{L}]\otimes\mathbb{C}[D_{M(-1)}]$, and obtain the following result.
\begin{thm}
The theta function $\Theta_{L,M}$ from Equation \eqref{compTheta} satisfies the functional equation from Proposition \ref{perTheta} for $\sigma$ and $\nu$ from $M^{\perp}_{L}$, as well as that from Theorem \ref{modTheta}, with the weight being $\big(\frac{b_{+}-c_{+}}{2},\frac{b_{-}-c_{-}}{2}\big)$ and the representation $\rho_{L}\otimes\rho_{M}^{*}$. In particular, the function $\tau\mapsto\Theta_{L,M}(\tau;u^{\perp}):=\Theta_{L,M}(\tau,0;u^{\perp})$ is a modular form of weight $\big(\frac{b_{+}-c_{+}}{2},\frac{b_{-}-c_{-}}{2}\big)$ and representation $\rho_{L}\otimes\rho_{M}^{*}$. In addition, for every $\tau\in\mathcal{H}$ and $\zeta \in L_{\mathbb{C}}$ we have the equality \[\Theta_{L}(\tau,\zeta;v)=\big\langle\Theta_{M}(\tau,\zeta_{M_{\mathbb{C}}};u),\Theta_{L,M}\big(\tau,\zeta_{M_{\mathbb{C}}^{\perp}};u^{\perp}\big)\big\rangle_{M}.\] \label{modThetaLM}
\end{thm}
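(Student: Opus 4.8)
The plan is to prove the three assertions in order: the two functional equations for $\Theta_{L,M}$ by the same methods used for $\Theta_{L}$ in Proposition \ref{perTheta} and Theorem \ref{modTheta}, the statement about $\Theta_{L,M}(\tau,0;v)$ as an immediate consequence, and the decomposition formula by a direct comparison of Fourier expansions. The only step that is not essentially a transcription of an earlier proof is the behaviour of $\Theta_{L,M}$ under $S$, which forces a Poisson summation carried out only in the directions orthogonal to $M$; I expect this to be the main obstacle, and it is the analogue --- now with the Jacobi variable $\eta$ included --- of the modularity of the contracted theta function around Equation~(8) of \cite{[Ze3]}.

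For the periodicity in $\eta$ with $\sigma,\nu\in M^{\perp}_{L}$, I would repeat the argument of Proposition \ref{perTheta}: replacing $\eta$ by $\eta+\tau\sigma_{u^{\perp}_{+}}+\overline{\tau}\sigma_{u^{\perp}_{-}}+\nu$ multiplies the summand indexed by $\lambda$ by $\mathbf{e}\big(\tau(\lambda_{u^{\perp}_{+}},\sigma_{u^{\perp}_{+}})+\overline{\tau}(\lambda_{u^{\perp}_{-}},\sigma_{u^{\perp}_{-}})\big)\mathbf{e}\big((\lambda_{M_{\mathbb{R}}^{\perp}},\nu)\big)$, the second factor being $1$ because $\nu\in M^{\perp}_{L}$ gives $(\lambda_{M_{\mathbb{R}}^{\perp}},\nu)=(\lambda,\nu)\in\mathbb{Z}$; completing the square in the $u^{\perp}_{\pm}$-directions and reindexing $\lambda\mapsto\lambda+\sigma$ inside $L^{*}/M$, which preserves the class $\lambda+L\in D_{L}$ and the value $\pi_{M}(\lambda)\in D_{M}$ since $\sigma\perp M$, recovers the claimed identity. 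For the modular equation I would first use Equation \eqref{cocycle} and its logarithmic derivative to reduce, exactly as in the proof of Theorem \ref{modTheta}, to $T^{\pm1}$ and $S$. Under $T^{\pm1}$ the summand indexed by $\lambda$ picks up $\mathbf{e}\big(\pm\tfrac{1}{2}\lambda_{M_{\mathbb{R}}^{\perp}}^{2}\big)$; since $\lambda^{2}$ and $\lambda_{M_{\mathbb{R}}}^{2}$ are well defined modulo $2$ on $L^{*}/M$ (here evenness of $M$ and $M\subseteq L$ are used) and are congruent to $\gamma^{2}$ and $\delta^{2}$ for $\gamma=\lambda+L$ and $\delta=\pi_{M}(\lambda)$, the orthogonal splitting $\lambda^{2}=\lambda_{M_{\mathbb{R}}}^{2}+\lambda_{M_{\mathbb{R}}^{\perp}}^{2}$ turns this multiplier into $\mathbf{e}\big(\pm\tfrac{\gamma^{2}-\delta^{2}}{2}\big)$, which is the eigenvalue of $(\rho_{L}\otimes\rho_{M}^{*})(T^{\pm1})$ on $\mathfrak{e}_{\gamma}\otimes\mathfrak{e}_{\delta}^{*}$; together with $\phi=1$ and $j_{T^{\pm1}}'=0$ this settles $T^{\pm1}$.

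The step under $S$ is the crux. For each fixed $\delta\in D_{M}$ I would apply the Poisson summation formula to the sum over $\{\lambda\in L^{*}/M:\pi_{M}(\lambda)=\delta\}$ --- which, thanks to the primitivity of $M$, is indexed injectively by a coset of a full-rank lattice in $M_{\mathbb{R}}^{\perp}$ via $\lambda\mapsto\lambda_{M_{\mathbb{R}}^{\perp}}$ --- but only in the $b_{+}-c_{+}$ positive and $b_{-}-c_{-}$ negative directions spanning $M_{\mathbb{R}}^{\perp}$; as in Theorem \ref{modTheta} the variable $\eta\in M_{\mathbb{C}}^{\perp}$ is handled by translating the integration variable, so one checks the identity for $\eta\in M_{\mathbb{R}}^{\perp}$ and extends it by holomorphicity in $\eta$. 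The Gaussian integral contributes $\phi(\tau)^{b_{+}-c_{+}}\overline{\phi(\tau)}^{b_{-}-c_{-}}$, the exponential prefactor in $\eta$ analogous to the one in Theorem \ref{modTheta}, and the root of unity $\mathbf{e}\big(\tfrac{(b_{-}-c_{-})-(b_{+}-c_{+})}{8}\big)$; the reciprocal-volume constant combines to $1/\sqrt{|D_{L}|/|D_{M}|}$, and --- this is the new bookkeeping --- summing the resulting characters over $\delta\in D_{M}$ reproduces precisely the action of $\rho_{M}^{*}(S)$ on the $\mathbb{C}[D_{M(-1)}]$-factor while the $D_{L}$-index is simultaneously transformed by $\rho_{L}(S)$, which is exactly the computation behind Equation~(8) of \cite{[Ze3]} with a Jacobi variable carried along. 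With both functional equations in hand the assertion on $\Theta_{L,M}(\tau,0;v)$ is immediate: at $\eta=0$ the periodicity is vacuous and the exponential prefactor in the modular equation equals $1$, so that equation becomes Equation \eqref{modeq} with the stated weight $\big(\tfrac{b_{+}-c_{+}}{2},\tfrac{b_{-}-c_{-}}{2}\big)$ and representation $\rho_{L}\otimes\rho_{M}^{*}$, the growth at the cusps being automatic for a theta series. Finally, for the decomposition I would expand the right-hand side: the pairing $\langle\cdot,\cdot\rangle_{M}$ contracts the $\mathbb{C}[D_{M}]$-component of $\Theta_{M}(\tau,\zeta_{M_{\mathbb{C}}};u)$ against the $\mathbb{C}[D_{M(-1)}]$-component of $\Theta_{L,M}(\tau,\zeta_{M_{\mathbb{C}}^{\perp}};u^{\perp})$, leaving $\sum_{\delta\in D_{M}}\theta_{M+\delta}(\tau,\zeta_{M_{\mathbb{C}}};u)\sum_{\lambda\in L^{*}/M,\ \pi_{M}(\lambda)=\delta}\mathbf{e}\big(\tau\tfrac{\lambda_{u^{\perp}_{+}}^{2}}{2}+\overline{\tau}\tfrac{\lambda_{u^{\perp}_{-}}^{2}}{2}+(\lambda_{M_{\mathbb{R}}^{\perp}},\zeta_{M_{\mathbb{C}}^{\perp}})\big)\mathfrak{e}_{\lambda+L}$; writing $\theta_{M+\delta}$ as a sum over $\mu\in M+\delta$ and using that $v=u\oplus u^{\perp}$ splits $\lambda_{v_{\pm}}^{2}=\lambda_{u_{\pm}}^{2}+\lambda_{u^{\perp}_{\pm}}^{2}$ and $(\lambda,\zeta)=(\lambda_{M_{\mathbb{C}}},\zeta_{M_{\mathbb{C}}})+(\lambda_{M_{\mathbb{C}}^{\perp}},\zeta_{M_{\mathbb{C}}^{\perp}})$ orthogonally, the double sum reassembles to $\sum_{\lambda\in L^{*}}\mathbf{e}\big(\tau\tfrac{\lambda_{v_{+}}^{2}}{2}+\overline{\tau}\tfrac{\lambda_{v_{-}}^{2}}{2}+(\lambda,\zeta)\big)\mathfrak{e}_{\lambda+L}=\Theta_{L}(\tau,\zeta;v)$, the reindexing being exactly the content of the analogues of Lemmas~1.2 and~1.5 of \cite{[Ze3]} already established for our Jacobi--Siegel theta functions.
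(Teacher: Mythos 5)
Your treatment of the periodicity, the $T^{\pm1}$ case, and the final decomposition formula is sound, and your direct Fourier-expansion proof of the third assertion is a reasonable self-contained substitute for the paper's appeal to Proposition 1.8 of \cite{[Ze3]}. The problem is the step you yourself flag as the crux: the $S$-transformation. There you propose a Poisson summation over the fiber $\{\lambda\in L^{*}/M\,:\,\pi_{M}(\lambda)=\delta\}$, viewed via $\lambda\mapsto\lambda_{M_{\mathbb{R}}^{\perp}}$ as a coset of the full-rank lattice $N=\{\mu_{M_{\mathbb{R}}^{\perp}}\,:\,\mu\in L^{*},\ \mu_{M_{\mathbb{R}}}\in M\}$ in $M_{\mathbb{R}}^{\perp}$, and then assert that ``summing the resulting characters over $\delta$'' reproduces $(\rho_{L}\otimes\rho_{M}^{*})(S)$. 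That assertion is the entire difficulty and is not established. Poisson summation returns a sum over the dual lattice $N^{*}$ inside $M_{\mathbb{R}}^{\perp}$ with constant $1/\operatorname{covol}(N)=\sqrt{|D_{L}|/|D_{M}|}$ (note that what you wrote, $1/\sqrt{|D_{L}|/|D_{M}|}$, is $\operatorname{covol}(N)$ itself), whereas $(\rho_{L}\otimes\rho_{M}^{*})(S)$ carries the constant $1/\sqrt{|D_{L}||D_{M}|}$ and a double sum over $D_{L}\times D_{M}$. Matching the two requires identifying $N^{*}$ with (a quotient of) the projection of $L^{*}/M$, together with the induced characters on $D_{L}$ and $D_{M}$ --- a genuine seesaw-type computation that you do not carry out. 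The citation offered in support, Equation (8) of \cite{[Ze3]}, is only the definition of $\Theta_{L,M}$, not a modularity argument, so it cannot fill this gap.

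The paper circumvents exactly this computation. It first treats the split case $L=M\oplus M^{\perp}_{L}$, where $\Theta_{L,M}$ is literally $\Theta_{M^{\perp}_{L}}$ tensored with the vector $\sum_{\delta\in D_{M}}\mathfrak{e}_{\delta}\otimes\mathfrak{e}_{\delta}^{*}$; since that vector is invariant under $\rho_{M}\otimes\rho_{M}^{*}$, Proposition \ref{perTheta} and Theorem \ref{modTheta} applied to $M^{\perp}_{L}$ immediately give both functional equations with the stated weight and representation. The general case is then deduced by expressing $\Theta_{L,M}$ through the operator $\downarrow^{L\oplus M(-1)}_{\Lambda\oplus M(-1)}$ for $\Lambda=M\oplus M^{\perp}_{L}$, which is already known to intertwine the Weil representations, so no new Poisson summation is needed. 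I recommend you either adopt this reduction, or, if you want the direct route, actually prove the duality statement for $N$ (in effect reproving the seesaw identity of \cite{[Ze3]}) before claiming the $S$-equation.
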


\begin{proof}
The third assertion follows from the proof of Proposition 1.8 of \cite{[Ze3]}, \emph{mutatis mutandis}. Now, the analogue of Lemma 1.9 of that reference presents $\Theta_{L,M}$, in the case where $L=M \oplus M^{\perp}_{L}$, as $\Theta_{M^{\perp}_{L}}$ (with the same variables), to which Proposition \ref{perTheta} and Theorem \ref{modTheta} give the desired properties, tensored with $\sum_{\delta \in D_{M}}\mathfrak{e}_{\delta}\otimes\mathfrak{e}_{\delta}^{*}$. In the general case the presentation from Lemma 1.10 of that reference, using the operator $\downarrow^{L \oplus M(-1)}_{\Lambda \oplus M(-1)}$ for $\Lambda=M \oplus M^{\perp}_{L}$, is valid here as well. Thus the first assertion is proved like Theorem 1.11 there (where the analogue of Lemma 1.4 of that reference is not required for the equation from Proposition \ref{perTheta} and is replaced by $\downarrow^{L}_{\Lambda}$ being a map of representations for that from Theorem \ref{modTheta}), and the second one is an immediate consequence of the first (or of the latter theorem). This proves the theorem.
\end{proof}

The paper \cite{[Ma]} defines an operation called theta contraction, which was generalized in \cite{[Ze3]} to the following notion. For a modular form $F$ of weight $(k,l)$ and representation $\rho_{L}^{*}$, a primitive non-degenerate sub-lattice $M$ of $L$, and an element $v\in\operatorname{Gr}(L_{\mathbb{R}})$ which is the direct sum of $u\in\operatorname{Gr}(M_{\mathbb{R}})$ and $u^{\perp}\in\operatorname{Gr}(M_{\mathbb{R}^{\perp}})$, we define the \emph{theta contraction of $F$ with respect to $u^{\perp}$} to be
\begin{equation}
\Theta_{(L,M)}(F;u^{\perp}):\mathcal{H}\to\mathbb{C}[D_{M(-1)}],\ \ \ \Theta_{(L,M)}(F;u^{\perp}):\tau\mapsto\big\langle\Theta_{L,M}(\tau;u^{\perp}),F(\tau)\big\rangle_{L}. \label{contdef}
\end{equation}
The second assertion of Theorem \ref{modThetaLM} implies that $\Theta_{(L,M)}(F;u^{\perp})$ is a modular form of weight $\big(k+\frac{b_{+}-c_{+}}{2},l+\frac{b_{-}-c_{-}}{2}\big)$ and representation $\rho_{M}^{*}$. We then obtain the following result.
\begin{prop}
Let $\Phi$ be a Jacobi form of weight $(k,l)$ and index $(L,v)$ with respect to $\Gamma$, and decompose $L_{\mathbb{C}}$ as $M_{\mathbb{C}} \oplus M_{\mathbb{C}}^{\perp}$. Then the map \[\operatorname{Res}^{L}_{M}\Phi:\mathcal{H} \times M_{\mathbb{C}}\to\mathbb{C},\qquad\operatorname{Res}^{L}_{M}\Phi(\tau,\xi)=\Phi(\tau,\xi+0)\quad\mathrm{with}\quad0 \in M_{\mathbb{C}}^{\perp}\] is a Jacobi form of weight $(k,l)$ and index $(M,u)$ with respect to $\Gamma$. Moreover, if we write $\Phi$ as $\Phi_{L,v}^{F}$ for $F$ as in Theorem \ref{main} then $\operatorname{Res}^{L}_{M}\Phi$ is $\Phi_{M,u}^{\Theta_{(L,M)}(F;u^{\perp})}$. \label{Jaccont}
\end{prop}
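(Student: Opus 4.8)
The plan is to deduce both assertions from Theorem \ref{main} and the third assertion of Theorem \ref{modThetaLM}, avoiding any direct work with Fourier expansions. Recall that we are in the situation where $v=u\oplus u^{\perp}$ with $u\in\operatorname{Gr}(M_{\mathbb{R}})$ and $u^{\perp}\in\operatorname{Gr}(M_{\mathbb{R}}^{\perp})$. First I would invoke Theorem \ref{main} to write $\Phi=\Phi_{L,v}^{F}$ for the modular form $F$ of weight $\big(k-\frac{b_{+}}{2},l-\frac{b_{-}}{2}\big)$ and representation $\rho_{L}^{*}$, so that $\Phi(\tau,\zeta)=\big\langle\Theta_{L}(\tau,\zeta;v),F(\tau)\big\rangle_{L}$. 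Substituting $\zeta=\xi+0$ with $\xi\in M_{\mathbb{C}}$ and $0\in M_{\mathbb{C}}^{\perp}$, one has $\zeta_{M_{\mathbb{C}}}=\xi$ and $\zeta_{M_{\mathbb{C}}^{\perp}}=0$, so the third assertion of Theorem \ref{modThetaLM} gives $\Theta_{L}(\tau,\xi+0;v)=\big\langle\Theta_{M}(\tau,\xi;u),\Theta_{L,M}(\tau;u^{\perp})\big\rangle_{M}$.

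The key step is to interchange the order of the two pairings. Since $\Theta_{L,M}$ takes values in $\mathbb{C}[D_{L}]\otimes\mathbb{C}[D_{M(-1)}]$, contracting its $\mathbb{C}[D_{M(-1)}]$-component against $\Theta_{M}(\tau,\xi;u)$ via $\langle\cdot,\cdot\rangle_{M}$ and its $\mathbb{C}[D_{L}]$-component against $F(\tau)$ via $\langle\cdot,\cdot\rangle_{L}$ may be carried out in either order; this ``Fubini'' identity for the two bilinear pairings on the finite-dimensional group rings is the one point I would spell out explicitly, and it is immediate on the basis elements $\mathfrak{e}_{\gamma}\otimes\mathfrak{e}_{\delta}^{*}$. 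Performing the contraction this way yields
\[\operatorname{Res}^{L}_{M}\Phi(\tau,\xi)=\big\langle\big\langle\Theta_{M}(\tau,\xi;u),\Theta_{L,M}(\tau;u^{\perp})\big\rangle_{M},F(\tau)\big\rangle_{L}=\big\langle\Theta_{M}(\tau,\xi;u),\big\langle\Theta_{L,M}(\tau;u^{\perp}),F(\tau)\big\rangle_{L}\big\rangle_{M},\]
and by the definition of the theta contraction in Equation \eqref{contdef} the inner pairing on the right is exactly $\Theta_{(L,M)}(F;u^{\perp})(\tau)$. Hence the right-hand side equals $\big\langle\Theta_{M}(\tau,\xi;u),\Theta_{(L,M)}(F;u^{\perp})(\tau)\big\rangle_{M}=\Phi_{M,u}^{\Theta_{(L,M)}(F;u^{\perp})}(\tau,\xi)$ by the pairing formula defining these Jacobi forms in Theorem \ref{main}. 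This establishes the asserted identity $\operatorname{Res}^{L}_{M}\Phi=\Phi_{M,u}^{\Theta_{(L,M)}(F;u^{\perp})}$.

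It remains to conclude that $\operatorname{Res}^{L}_{M}\Phi$ is a Jacobi form with the stated parameters, and this now follows formally. By the remark following Theorem \ref{modThetaLM}, $\Theta_{(L,M)}(F;u^{\perp})$ is a modular form of weight $\big(k-\frac{b_{+}}{2}+\frac{b_{+}-c_{+}}{2},\,l-\frac{b_{-}}{2}+\frac{b_{-}-c_{-}}{2}\big)=\big(k-\frac{c_{+}}{2},\,l-\frac{c_{-}}{2}\big)$ and representation $\rho_{M}^{*}$ with respect to $\Gamma$, where $(c_{+},c_{-})$ is the signature of $M$; hence Theorem \ref{main}, applied now to the lattice $M$ and the Grassmannian element $u$, shows that $\Phi_{M,u}^{\Theta_{(L,M)}(F;u^{\perp})}$ is a Jacobi form of weight $(k,l)$ and index $(M,u)$ with respect to $\Gamma$, as claimed. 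One could alternatively verify Equations \eqref{perJac} and \eqref{modJac} for $\operatorname{Res}^{L}_{M}\Phi$ directly, since for $\sigma,\nu\in M$ one has $\sigma_{v_{\pm}}=\sigma_{u_{\pm}}$ and for $\zeta=\xi+0$ one has $\zeta_{v_{\pm}}=\xi_{u_{\pm}}$, so the $(M,u)$-functional equations of weight $(k,l)$ drop out of the $(L,v)$-ones, with holomorphy in $\xi$ and the growth of the Fourier coefficients inherited from $\Phi$. The only real difficulty here is the bookkeeping with nested pairings and weight shifts; there is no analytic content beyond what is already packaged in Theorems \ref{main} and \ref{modThetaLM}.
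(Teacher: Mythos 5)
Your proof is correct, and the main identity is established exactly as in the paper: one substitutes $\zeta=\xi+0$, applies the third assertion of Theorem \ref{modThetaLM}, and interchanges the two pairings to expose the theta contraction from Equation \eqref{contdef} (the paper cites Lemma 2.2 of \cite{[Ze3]} for this interchange; your verification on the basis elements $\mathfrak{e}_{\gamma}\otimes\mathfrak{e}_{\delta}^{*}$ is the same fact). The only organizational difference is that the paper proves the first assertion directly, by restricting Equations \eqref{perJac} and \eqref{modJac} to $\sigma,\nu \in M$ and $\zeta=\xi+0$ and checking the growth of the Fourier coefficients, whereas you deduce it from the identity by applying Theorem \ref{main} to the lattice $M$ and the element $u$, using the already-established fact that $\Theta_{(L,M)}(F;u^{\perp})$ is a modular form of weight $\big(k-\frac{c_{+}}{2},l-\frac{c_{-}}{2}\big)$ and representation $\rho_{M}^{*}$; both routes are valid and rely on the same ingredients.
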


\begin{proof}
Restricting Equation \eqref{perJac} for $\Phi$ to $\sigma$ and $\nu$ in $M$ and recalling that $\sigma_{v_{\pm}}=\sigma_{u_{\pm}}$ and $(\sigma,\xi)=(\sigma,\xi+0)$ for such $\sigma$ yields the same equation for $\operatorname{Res}^{L}_{M}\Phi$. Equation \eqref{modJac} also transforms easily, using the fact that the $M_{\mathbb{C}}^{\perp}$-part of $\xi+0$ vanishes. Since the resulting combinations of the Fourier coefficients of $\Phi$, which grow at most linearly exponentially, also grows at most linearly exponentially, this proves the first assertion. For the second one, write $\Phi=\Phi_{L,v}^{F}$ as in Corollary \ref{decomtheta} and Theorem \ref{main}, and express $\theta_{L}(\tau,\xi+0;v)$ as in Theorem \ref{modThetaLM}. But now Lemma 2.2 of \cite{[Ze3]} expresses the resulting pairing as $\big\langle\Theta_{M}(\tau,\xi;u),\langle\Theta_{L,M}(\tau,0;u^{\perp}),F(\tau)\rangle_{L}\big\rangle_{M}$, and as in the proof of Theorem 2.6 of that reference, the internal pairing is $\Theta_{(L,M)}(F;u^{\perp})$ by Equation \eqref{contdef}. We thus obtain the desired result by Theorem \ref{main} again. This proves the proposition.
\end{proof}
Note that for a general element $\zeta \in L_{\mathbb{C}}$, namely $\zeta=\xi+\eta$ for such $\xi$ and some $\eta \in M_{\mathbb{C}}^{\perp}$, the proof of Proposition \ref{Jaccont} expresses $\Phi_{L,v}^{F}(\tau,\xi+\eta;v)$ as the $M$-pairing of $\Theta_{M}(\tau,\xi;u)$ with the function $\big\langle\Theta_{L,M}(\tau,\eta;u^{\perp}),F(\tau)\big\rangle_{L}$, which we may consider as a generalized theta contraction. The latter, viewed as a function on $\mathcal{H} \times M_{\mathbb{C}}^{\perp}$, is a (vector-valued) Jacobi form, of weight $\big(k+\frac{b_{+}-c_{+}}{2},l+\frac{b_{-}-c_{-}}{2}\big)$, index $(M^{\perp}_{L},u^{\perp})$, and representation $\rho_{M}^{*}$. It will be interesting to see which properties do such functions have, especially in relation to $\Phi_{L,v}^{F}(\tau,\xi+\eta;v)$.

We recall that in the classical case, where the index is a number (rather than a lattice), the product of Jacobi forms is a Jacobi form, where both the weights and the index are added. With lattices one cannot expect such a formula in general, since the variable $\zeta$ is taken from two different spaces (sometimes in different dimensions). In the positive definite lattice index case, a formula can be obtained by representing lattices using Gram matrices. However, working in the terminology of abstract lattices, and extending to our more general situation, this definition takes the following form.
\begin{prop}
Let $L$, $K$, $v$, and $w$ be as in Proposition \ref{dirsum}. Assume that there is a group isomorphism $\iota:L \to K$ such that its extension $\iota_{\mathbb{R}}:L_{\mathbb{R}} \to K_{\mathbb{R}}$ takes each of the spaces $v_{\pm}$ onto the respective space $w_{\pm}$, so that in particular $a_{+}=b_{+}$ and $a_{-}=b_{-}$. We define $L+_{\iota}K$ to be $L$ as a group, endowed with the quadratic form $\lambda\mapsto\frac{\lambda^{2}}{2}+\frac{(\iota\lambda)^{2}}{2}$. Let $v+_{\iota}w$ be the vector space decomposition of $(L+_{\iota}K)_{\mathbb{R}}=L_{\mathbb{R}}$ as $v_{+} \oplus v_{-}$. Then $L+_{\iota}K$ is a (non-degenerate) lattice of the same signature $(b_{+},b_{-})$ as $L$ and $K$, $v+_{\iota}w$ represents an element of $\operatorname{Gr}\big((L+_{\iota}K)_{\mathbb{R}}\big)$, and for a Jacobi form $\Phi$ of weight $(k,l)$ and index $(L,v)$ and a Jacobi form $\Psi$ of weight $(\kappa,\mu)$ and index $(K,w)$, the map $\Phi\Psi$ taking $\tau\in\mathcal{H}$ and $\zeta \in L_{\mathbb{C}}$ to $\Phi(\tau,\zeta)\Psi(\tau,\iota_{\mathbb{C}}\zeta)$ is a Jacobi form of weight $(k+\kappa,l+\mu)$ and index $(L+_{\iota}K,v+_{\iota}w)$. \label{prod}
\end{prop}

\begin{proof}
Checking non-degeneracy and the signature can be carried out on the level of the real quadratic spaces, where it is clear from the assumption that the $(L+_{\iota}K)_{\mathbb{R}}$-norm of any non-zero element of $(v+_{\iota}w)_{+}=v_{+}$ (resp. $(v+_{\iota}w)_{-}=v_{-}$) is positive (resp. negative). Moreover, the $(L+_{\iota}K)_{\mathbb{R}}$-pairing of two elements there are the sum of their $L_{\mathbb{R}}$-pairings and the $K_{\mathbb{R}}$-pairing of their $\iota_{\mathbb{R}}$-images, so that if one is in $(v+_{\iota}w)_{+}$ and the other is in $(v+_{\iota}w)_{-}$ then the pairing vanishes. Therefore $(L+_{\iota}K)_{\mathbb{R}}$ is indeed the orthogonal direct sum of the $b_{+}$-dimensional positive definite space $(v+_{\iota}w)_{+}$ and the $b_{-}$-dimensional positive definite space $(v+_{\iota}w)_{-}$, yielding the non-degeneracy, the signature, and the identification of $v+_{\iota}w$ as a Grassmannian element. Moreover, applying Equations \eqref{perJac} and \eqref{modJac} for $\Phi$ and $\Psi$ and the same considerations produce these equalities for the product $\Phi\Psi$, and since the coefficient multiplying each Fourier coefficient from Lemma \ref{Fourinv} in each of $\Phi$ and $\Psi$ satisfies the growth condition from Definition \ref{Jacdef}, the same happens in the product. This proves the proposition.
\end{proof}
A typical case for Proposition \ref{prod} is the case where $L$ is the lattice $M(m)$ obtained from a lattice $M$ by multiplying the forms by the integer $m>0$, $K=M(n)$ for another integer $n>0$, $\iota=\operatorname{Id}_{M}$, and $w$ and $v$ come from the same element $u\in\operatorname{Gr}(M_{\mathbb{R}})$. Then $L+_{\iota}K=M(m+n)$ and $v+_{\iota}w=u$, and we obtain the expected addition of the indices. Note that in the positive definite case the Grassmannians are trivial, the isomorphism $\iota$ from Proposition \ref{prod} takes the form of choosing bases for $L$ and $K$ and expressing the bilinear forms in terms of positive definite integral-valued symmetric matrices (with even diagonal entries). Then $L+_{\iota}K$ is represented by the sum of these matrices, and Proposition \ref{prod}, in the holomorphic case, reproduces the product rule from Corollary 1.7 of \cite{[Zi]} and others (such a product rule also holds in the vector-valued setting, as appearing in Remark 3 of \cite{[W]}).

Combining Proposition \ref{prod} with Theorem \ref{main}, we obtain the following operation on pairs of vector-valued modular forms.
\begin{thm}
Take two lattices $L$ and $K$ of the same signature $(b_{+},b_{-})$, and fix a group isomorphism $\iota:L \to K$, with real extension $\iota_{\mathbb{R}}$. Assume that $\iota_{\mathbb{R}}$ takes the spaces $v_{\pm}$ associated with an element $v\in\operatorname{Gr}(L_{\mathbb{R}})$ onto the respective spaces $v_{\pm}$ corresponding to $w\in\operatorname{Gr}(K_{\mathbb{R}})$, and take $L+_{\iota}K$ and $v+_{\iota}w$ to be as in Proposition \ref{prod}. Then the image of the map taking $\lambda \in L$ to $(\lambda,\iota\lambda) \in L \oplus K$ presents $L+_{\iota}K$ as a primitive sub-lattice of $L \oplus K$, and $(v+_{\iota}w)_{\pm}$ is the intersection of the spaces $v_{\pm} \oplus w_{\pm}$ associated with $v \oplus w\in\operatorname{Gr}(L_{\mathbb{R}} \oplus K_{\mathbb{R}})$ with $(L+_{\iota}K)_{\mathbb{R}}$. Take a modular form $F$ of weight $\big(k-\frac{b_{+}}{2},l-\frac{b_{-}}{2}\big)$ and representation $\rho_{L}^{*}$ and a modular form $H$ of weight $\big(\kappa-\frac{b_{+}}{2},\mu-\frac{b_{-}}{2}\big)$ and representation $\rho_{K}^{*}$, both with respect to the same subgroup $\Gamma$ of $\operatorname{Mp}_{2}(\mathbb{Z})$, and let $\Phi_{L,v}^{F}$ and $\Phi_{K,w}^{H}$ be the corresponding Jacobi forms from Theorem \ref{main}. Then the modular form that is associated with the product $\Phi_{L,v}^{F}\Phi_{K,u}^{H}$ defined in Proposition \ref{prod} is the theta contraction $\Theta_{(L \oplus K,L+_{\iota}K)}\big(F \otimes H;(v+_{\iota}w)^{\perp}\big)$ from Equation \eqref{contdef}. \label{MFprodJac}
\end{thm}

\begin{proof}
The fact that $\{(\lambda,\iota\lambda)\,|\,\lambda \in L\}$ is isomorphic to $L+_{\iota}K$, its primitivity in $L \oplus K$, and the relation between $v \oplus w\in\operatorname{Gr}(L_{\mathbb{R}} \oplus K_{\mathbb{R}})$ and $v+_{\iota}w\in\operatorname{Gr}\big((L+_{\iota}K)_{\mathbb{R}}\big)$ are clear from the definitions in Proposition \ref{prod}. Now, Proposition \ref{dirsum} allows us to write \[(\Phi_{L,v}^{F}\Phi_{K,w}^{H})(\tau,\zeta)=\Phi_{L,v}^{F}(\tau,\zeta)\Phi_{K,w}^{H}(\tau,\iota\zeta)=\Phi_{L \oplus K,v \oplus w}^{F \otimes H}(\tau,\zeta+\iota\zeta)\] for $\tau\in\mathcal{H}$ and $\zeta \in L_{\mathbb{C}}$, and note that $(L+_{\iota}K)_{\mathbb{C}}$, as a subspace of $L_{\mathbb{C}} \oplus K_{\mathbb{C}}$, is just $\{\zeta+\iota\zeta\,|\,\zeta \in L_{\mathbb{C}}\}$, with orthogonal complement $\{\xi-\iota\xi\,|\,\xi \in L_{\mathbb{C}}\}$. Therefore the right hand side is $\operatorname{Res}^{L \oplus K}_{L+_{\iota}K}\Phi_{L \oplus K,v \oplus w}^{F \otimes H}(\tau,\xi)$ for $\zeta+\iota\zeta \in (L+_{\iota}K)_{\mathbb{C}}$ in the notation from Proposition \ref{Jaccont}, which also gives that the associated modular form is the asserted one. This proves the theorem.
\end{proof}

The explicit formulae from the theta contraction involved in Theorem \ref{MFprodJac} are not so trivial. We consider the most classical case, where $L$ and $K$ are positive definite of rank 1. Denoting by $A_{2}$ the lattice $\mathbb{Z}$ with a generator of quadratic value 1, we write $L=A_{2}(m)$ and $K=A_{2}(n)$ for positive integers $m$ and $n$. The discriminants $D_{L}$ and $D_{K}$ are $\mathbb{Z}/2m\mathbb{Z}$ and $\mathbb{Z}/2n\mathbb{Z}$ respectively, the Grassmannians are trivial, the corresponding theta functions and Jacobi theta functions from Equations \eqref{Thetadef} and \eqref{JacTheta} are just the holomorphic functions $\theta_{r+2m\mathbb{Z}}(\tau):=\sum_{h\in\mathbb{Z}+\frac{r}{2m}}\mathbf{e}(mh^{2}\tau)$ and $\theta_{r+2m\mathbb{Z}}(\tau,\zeta):=\sum_{h\in\mathbb{Z}+\frac{r}{2m}}\mathbf{e}(mh^{2}\tau+2m\zeta)$ (with $\zeta\in\mathbb{C}$) as components, and the same for $K$, $L+_{\iota}K=A_{2}(m+n)$, and any other such lattice. The latter lattice is the ``diagonal'' image inside the rank 2 lattice $L \oplus K$, and the components of the function $\Theta_{L \oplus K,L+_{\iota}K}$ from Theorem \ref{modThetaLM} appearing in the theta contraction from Equation \eqref{contdef} are based, via Equation \eqref{compTheta}, on the orthogonal complement of $L+_{\iota}K$ inside $L \oplus K$. But this rank 1 lattice is generated by an element of quadratic value $\frac{mn(m+n)}{d^{2}}$ for $d:=\gcd\{m,n\}$, and we get that for $r\in\mathbb{Z}/2m\mathbb{Z}$ and $s\in\mathbb{Z}/2n\mathbb{Z}$ there are $\frac{m+n}{d}$ classes $t\in\mathbb{Z}/2(m+n)\mathbb{Z}$ that are congruent to $r+s$ modulo $2d$. Moreover, for each such $t$ there exists some $l(r,s,t)\in\mathbb{Z}/\frac{mn(m+n)}{d^{2}}\mathbb{Z}$, which is congruent to $\frac{rn-sm}{d}$ modulo $\frac{2mn}{d}$, such that if $F=\sum_{r\in\mathbb{Z}/2m\mathbb{Z}}F_{r}\mathfrak{e}_{r}^{*}$ and $H=\sum_{s\in\mathbb{Z}/2n\mathbb{Z}}H_{s}\mathfrak{e}_{s}^{*}$ then the operation from Theorem \ref{MFprodJac} produces \[\sum_{r\in\mathbb{Z}/2m\mathbb{Z}}\sum_{s\in\mathbb{Z}/2n\mathbb{Z}}\sum_{\substack{t\in\mathbb{Z}/2(m+n)\mathbb{Z} \\ t \equiv r+s(\mathrm{mod\ }2d)}}F_{r}H_{s}\theta_{l(r,s,t)+\frac{2mn(m+n)}{d^{2}}\mathbb{Z}}\mathfrak{e}_{r}^{*}.\] Applying this to the constant functions, and going over to the associated Jacobi forms, we obtain, for each such $r$ and $s$, the equality \[\theta_{r+2m\mathbb{Z}}(\tau,\zeta)\theta_{s+2n\mathbb{Z}}(\tau,\zeta)=\sum_{\substack{t\in\mathbb{Z}/2(m+n)\mathbb{Z} \\ t \equiv r+s(\mathrm{mod\ }2d)}}\theta_{l(r,s,t)+2\frac{mn(m+n)}{d^{2}}\mathbb{Z}}(\tau)\theta_{t+2(m+n)\mathbb{Z}}(\tau,\zeta).\]

For an explicit example, take $m=n=d=1$, so that $m+n=\frac{mn(m+n)}{d^{2}}=2$, and write $\theta(\tau)$ and $\tilde{\theta}(\tau)$ for $\theta_{0+2\mathbb{Z}}(\tau)$ and $\theta_{1+2\mathbb{Z}}(\tau)$ respectively. Then with the index 2, the functions $\theta_{0+4\mathbb{Z}}(\tau)$ and $\theta_{2+4\mathbb{Z}}(\tau)$ are just $\theta(2\tau)$ and $\tilde{\theta}(2\tau)$ respectively, and the remaining functions $\theta_{1+4\mathbb{Z}}(\tau)$ and $\theta_{3+4\mathbb{Z}}(\tau)$ coincide to a single function, which we denote by $\hat{\theta}(\tau)$. The corresponding map from Theorem \ref{MFprodJac} is then defined by \[\mathfrak{e}_{0+2\mathbb{Z}}^{*}\otimes\mathfrak{e}_{0+2\mathbb{Z}}^{*}\mapsto\theta(2\tau)\mathfrak{e}_{0+4\mathbb{Z}}^{*}+\tilde{\theta}(2\tau)\mathfrak{e}_{2+4\mathbb{Z}}^{*},\] \[\mathfrak{e}_{1+2\mathbb{Z}}^{*}\otimes\mathfrak{e}_{1+2\mathbb{Z}}^{*}\mapsto\tilde{\theta}(2\tau)\mathfrak{e}_{0+4\mathbb{Z}}^{*}+\theta(2\tau)\mathfrak{e}_{2+4\mathbb{Z}}^{*},\] and \[\mathfrak{e}_{0+2\mathbb{Z}}^{*}\otimes\mathfrak{e}_{1+2\mathbb{Z}}^{*},\ \mathfrak{e}_{1+2\mathbb{Z}}^{*}\otimes\mathfrak{e}_{0+2\mathbb{Z}}^{*}\mapsto\hat{\theta}(\tau)(\mathfrak{e}_{1+4\mathbb{Z}}^{*}+\mathfrak{e}_{3+4\mathbb{Z}}^{*}).\] It would be interesting to relate the generalization of these formulae appearing in the previous paragraph to the theory of theta blocks from, e.g., \cite{[GSZ]}.

\smallskip

We conclude this section by remarking that formulae similar to those from Propositions \ref{dirsum} and \ref{Jaccont} and Theorem \ref{MFprodJac} hold for the Jacobi forms from Proposition \ref{skewhol}, with $F$ (resp. $H$) replaced by a modular form with the representation $\rho_{L}$ (resp. $\rho_{K}$). Combining this with Remark \ref{algscal} explains why in the isomorphisms involving skew-holomorphic Jacobi forms, in \cite{[Sk]} and others, the algebra structure involves multiplying by $h(-\overline{\tau})$ (which is the formula for $\tilde{h}(\tau)$ from Corollary \ref{mapMFs} in the case of the trivial representation).

\section{Jacobi Forms with Characteristics \label{Char}}

Recall that Section 4 of \cite{[Bor]} concerns theta functions that are more general than those from Equation \eqref{Thetadef}. We take, in addition to the parameters from that equation, two vectors $\alpha$ and $\beta$ from $L_{\mathbb{R}}$, and set \[\Theta_{L}\big(\tau;\textstyle{\binom{\alpha}{\beta}};v\big):=\displaystyle\sum_{\lambda \in L^{*}}\mathbf{e}\bigg(\tau\frac{(\lambda+\beta)_{v_{+}}^{2}}{2}+\overline{\tau}\frac{(\lambda+\beta)_{v_{-}}^{2}}{2}-\big(\lambda+\tfrac{\beta}{2},\alpha\big)\bigg)\mathfrak{e}_{\lambda+L}.\] Recall that the group $\operatorname{SL}_{2}(\mathbb{Z})\subseteq\operatorname{SL}_{2}(\mathbb{R})$, hence also $\operatorname{Mp}_{2}(\mathbb{Z})\subseteq\operatorname{Mp}_{2}(\mathbb{R})$, acts on such column vectors (more naturally, it acts on $\mathbb{R}^{2}$ and we view these vectors as elements of $\mathbb{R}^{2}\otimes_{\mathbb{R}}L_{\mathbb{R}}$). Then Theorem 4.1 of \cite{[Bor]} states that \[\Theta_{L}\big(A\tau;\textstyle{A\binom{\alpha}{\beta}};v\big)=\phi(\tau)^{b_{+}}\overline{\phi(\tau)}^{b_{-}}\rho_{L}(A,\phi)\Theta_{L}\big(\tau;\textstyle{\binom{\alpha}{\beta}};v\big)\] for every $(A,\phi)\in\operatorname{Mp}_{2}(\mathbb{Z})$. We combine this generalization with Equation \eqref{JacTheta} and set
\begin{equation}
\Theta_{L}\big(\tau,\zeta;\textstyle{\binom{\alpha}{\beta}};v\big)\!:=\!\displaystyle\sum_{\lambda \in L^{*}}\!\mathbf{e}\bigg(\tau\frac{(\lambda+\beta)_{v_{+}}^{2}}{2}+\overline{\tau}\frac{(\lambda+\beta)_{v_{-}}^{2}}{2}+(\lambda+\beta,\zeta)-\big(\lambda+\tfrac{\beta}{2},\alpha\big)\bigg)\mathfrak{e}_{\lambda+L}, \label{charTheta}
\end{equation}
for which we prove the following extension of Theorem 4.1 of \cite{[Bor]} and of our Propositions \ref{perTheta} and \ref{difeq} and Theorem \ref{modTheta}.
\begin{thm}
Take $\tau\in\mathcal{H}$, $v\in\operatorname{Gr}(L_{\mathbb{R}})$, $\zeta \in L_{\mathbb{C}}$, and $\alpha$ and $\beta$ from $L_{\mathbb{R}}$. Then if $\sigma$ and $\nu$ are in $L$ then $\Theta_{L}\big(\tau,\zeta+\tau\sigma_{v_{+}}+\overline{\tau}\sigma_{v_{-}}+\nu;\binom{\alpha}{\beta};v\big)$ equals \[\mathbf{e}\bigg(-\tau\frac{\sigma_{v_{+}}^{2}}{2}-\overline{\tau}\frac{\sigma_{v_{-}}^{2}}{2}-(\sigma,\zeta)+(\nu,\beta)+(\sigma,\alpha)\bigg)\Theta_{L}\big(\tau,\zeta;\textstyle{\binom{\alpha}{\beta}};v\big),\] and if $(A,\phi)\in\operatorname{Mp}_{2}(\mathbb{Z})$ then $\Theta_{L}\Big(A\tau,\tfrac{\zeta_{v_{+}}}{j(A,\tau)}+\tfrac{\zeta_{v_{-}}}{\overline{j(A,\tau)}};A\binom{\alpha}{\beta};v\Big)$ equals \[\phi(\tau)^{b_{+}}\overline{\phi(\tau)}^{b_{-}}\mathbf{e}\Big(\tfrac{j_{A}'\zeta_{v_{+}}^{2}}{2j(A,\tau)}+\tfrac{j_{A}'\zeta_{v_{-}}^{2}}{2\overline{j(A,\tau)}}\Big)\rho_{L}(A,\phi) \Theta_{L}\big(\tau,\zeta;\textstyle{\binom{\alpha}{\beta}};v\big).\] The differential operators $4\pi i\partial_{\tau}-\Delta_{v_{+}}^{h}$, $4\pi i\partial_{\overline{\tau}}-\Delta_{v_{-}}^{h}$, and $\partial_{\overline{\zeta}_{i}}$ for $1 \leq i \leq b_{+}+b_{-}$ from Proposition \ref{difeq} annihilate these theta functions as well. \label{propgen}
\end{thm}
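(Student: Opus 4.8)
The plan is to prove the three families of assertions by the same methods used for their characteristic-free special cases in Propositions \ref{perTheta} and \ref{difeq} and Theorem \ref{modTheta}, with the characteristic $\binom{\alpha}{\beta}$ handled either directly (for the first two) or by a reduction to the characteristic-free theta function (for the modular one). For the differential operators I would argue exactly as in the proof of Proposition \ref{difeq}, acting on the summand of a fixed $\lambda\in L^{*}$ in Equation \eqref{charTheta}: the derivatives $\partial_{\tau}$ and $\partial_{\overline{\tau}}$ multiply it by $\pi i(\lambda+\beta)_{v_{+}}^{2}$ and $\pi i(\lambda+\beta)_{v_{-}}^{2}$, while evaluating in orthonormal bases for $v_{\pm}$ shows $\Delta_{v_{\pm}}^{h}$ multiplies it by $(2\pi i)^{2}(\lambda+\beta)_{v_{\pm}}^{2}$; subtracting gives the annihilation by $4\pi i\partial_{\tau}-\Delta_{v_{+}}^{h}$ and $4\pi i\partial_{\overline{\tau}}-\Delta_{v_{-}}^{h}$, and the annihilation by each $\partial_{\overline{\zeta}_{i}}$ is the manifest holomorphicity of \eqref{charTheta} in $\zeta$.

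For the periodicity in $\zeta$ I would mimic the proof of Proposition \ref{perTheta}. Replacing $\zeta$ by $\zeta+\tau\sigma_{v_{+}}+\overline{\tau}\sigma_{v_{-}}+\nu$ with $\sigma,\nu\in L$ multiplies the summand of $\lambda$ by $\mathbf{e}\big(\tau((\lambda+\beta)_{v_{+}},\sigma_{v_{+}})+\overline{\tau}((\lambda+\beta)_{v_{-}},\sigma_{v_{-}})+(\lambda,\nu)+(\beta,\nu)\big)$, where $(\lambda,\nu)\in\mathbb{Z}$ contributes nothing but $(\beta,\nu)$ produces the factor $\mathbf{e}((\nu,\beta))$ of the claim. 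Completing the square turns the coefficient of $\tau$ (resp. $\overline{\tau}$) into $\tfrac{(\lambda+\sigma+\beta)_{v_{+}}^{2}}{2}-\tfrac{\sigma_{v_{+}}^{2}}{2}$ (resp. with $v_{-}$), writing $(\lambda+\beta,\zeta)=(\lambda+\sigma+\beta,\zeta)-(\sigma,\zeta)$ and $(\lambda+\tfrac{\beta}{2},\alpha)=(\lambda+\sigma+\tfrac{\beta}{2},\alpha)-(\sigma,\alpha)$; reindexing $\lambda\mapsto\lambda-\sigma$, which preserves $L^{*}$ and each coset $\lambda+L$, recovers \eqref{charTheta} times $\mathbf{e}\big(-\tau\tfrac{\sigma_{v_{+}}^{2}}{2}-\overline{\tau}\tfrac{\sigma_{v_{-}}^{2}}{2}-(\sigma,\zeta)+(\sigma,\alpha)\big)$, and together with the factor $\mathbf{e}((\nu,\beta))$ this is exactly the asserted multiplier.

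For the modular transformation the cleanest route I see is first to record the identity obtained by completing the square in the exponent of \eqref{charTheta}, \[\Theta_{L}\big(\tau,\zeta;\textstyle{\binom{\alpha}{\beta}};v\big)=\mathbf{e}\Big(\tau\tfrac{\beta_{v_{+}}^{2}}{2}+\overline{\tau}\tfrac{\beta_{v_{-}}^{2}}{2}+(\beta,\zeta)-\tfrac{1}{2}(\beta,\alpha)\Big)\Theta_{L}\big(\tau,\zeta-\alpha+\tau\beta_{v_{+}}+\overline{\tau}\beta_{v_{-}};v\big),\] which expresses the characteristic theta function through the one of Equation \eqref{JacTheta}. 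Writing $A=\binom{a\ \ b}{c\ \ d}$, $A\binom{\alpha}{\beta}=\binom{\alpha'}{\beta'}$ with $\beta'_{v_{\pm}}=c\alpha_{v_{\pm}}+d\beta_{v_{\pm}}$ (the orthogonal projections onto the fixed real spaces $v_{\pm}$ are $\mathbb{R}$-linear, hence commute with the scalars $a,b,c,d$), and $j=j(A,\tau)$, a short computation using only $ad-bc=1$ shows that the $\zeta$-argument produced inside $\Theta_{L}(A\tau,\,\cdot\,;v)$ on the left side of the claimed equation, namely $\tfrac{\zeta_{v_{+}}}{j}+\tfrac{\zeta_{v_{-}}}{\overline{j}}-\alpha'+(A\tau)\beta'_{v_{+}}+\overline{A\tau}\beta'_{v_{-}}$, equals $\tfrac{w_{v_{+}}}{j}+\tfrac{w_{v_{-}}}{\overline{j}}$ exactly, with no extra lattice shift, where $w=\zeta-\alpha+\tau\beta_{v_{+}}+\overline{\tau}\beta_{v_{-}}$. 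Applying Theorem \ref{modTheta} to $\Theta_{L}(\,\cdot\,,\,\cdot\,;v)$ at $A\tau$ with $\zeta$-argument $w$, and then using the displayed identity backwards, reduces the claimed modular equation to an identity between three scalar exponential prefactors — the one coming from the displayed identity at $A\tau$ and $A\binom{\alpha}{\beta}$, the factor $\mathbf{e}\big(\tfrac{j_{A}'w_{v_{+}}^{2}}{2j}+\tfrac{j_{A}'w_{v_{-}}^{2}}{2\overline{j}}\big)$ from Theorem \ref{modTheta}, and the displayed identity at $\tau$ and $\binom{\alpha}{\beta}$ — matching the target factor $\mathbf{e}\big(\tfrac{j_{A}'\zeta_{v_{+}}^{2}}{2j}+\tfrac{j_{A}'\zeta_{v_{-}}^{2}}{2\overline{j}}\big)$, all the intermediate $\tau$-, $\overline{\tau}$-, and $\zeta$-dependent cross-terms cancelling again by $ad-bc=1$.

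I expect this scalar-prefactor bookkeeping in the modular equation to be the main obstacle: not conceptually hard, but requiring one to reconcile several exponential factors whose arguments are quadratic in $\zeta$, $\alpha$, $\beta$ and involve both $\tau$ and $\overline{\tau}$ through $j$ and $\overline{j}$, and to verify that every cross-term cancels. The extension from $\zeta\in L_{\mathbb{R}}$ to $\zeta\in L_{\mathbb{C}}$ by holomorphicity is already built into Theorem \ref{modTheta}, so it need not be repeated along this route. Alternatively, one can reprove the modular equation directly on the generators $T^{\pm1}$ and $S$ as in Theorem \ref{modTheta}: for $T^{\pm1}$ the summand of $\lambda$ picks up $\mathbf{e}(\pm\tfrac{\lambda^{2}}{2})=\mathbf{e}(\pm\tfrac{\gamma^{2}}{2})$, reproducing $\rho_{L}(T^{\pm1})$, while for $S$ one completes the square and applies Poisson summation, the only new feature being the extra character $\mathbf{e}((\lambda,\beta))$ in the sum, which shifts the dual summation variable and, together with the change of the linear term, yields precisely the transformed characteristic $S\binom{\alpha}{\beta}=\binom{-\beta}{\alpha}$ on the right; in that variant the holomorphicity-in-$\zeta$ argument reappears in the $S$-case.
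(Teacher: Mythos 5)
Your handling of the periodicity in $\zeta$ and of the differential operators coincides with the paper's: both are the arguments of Propositions \ref{perTheta} and \ref{difeq} applied verbatim to the summand of Equation \eqref{charTheta}, with the extra factors $\mathbf{e}\big((\nu,\beta)\big)$ and $\mathbf{e}\big((\sigma,\alpha)\big)$ arising exactly as you describe. For the modular transformation you take a genuinely different route. The paper re-runs the generator argument of Theorem \ref{modTheta}: it checks $T^{\pm1}$ directly, where the contributions $\mathbf{e}\big(\pm\tfrac{(\lambda+\beta)^{2}}{2}\big)$ from shifting $\tau$ and $\mathbf{e}\big(\mp(\lambda+\tfrac{\beta}{2},\beta)\big)$ from shifting $\alpha$ combine to $\mathbf{e}\big(\pm\tfrac{\lambda^{2}}{2}\big)$, and redoes the Poisson summation for $S$ with the modified Fourier transform of Lemma 3.1 of \cite{[Bor]} and the transformed characteristic $\binom{-\beta}{\alpha}$, extending from $\zeta \in L_{\mathbb{R}}$ by holomorphicity --- this is precisely the alternative you sketch at the end. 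Your primary route instead reduces \eqref{charTheta} to the theta function of Equation \eqref{JacTheta} via the completing-the-square identity you display, applies Theorem \ref{modTheta} once at the shifted argument $w=\zeta-\alpha+\tau\beta_{v_{+}}+\overline{\tau}\beta_{v_{-}}$, and matches prefactors. I verified your two key claims: the argument identity $\tfrac{\zeta_{v_{+}}}{j}+\tfrac{\zeta_{v_{-}}}{\overline{j}}-\alpha'+(A\tau)\beta'_{v_{+}}+\overline{A\tau}\beta'_{v_{-}}=\tfrac{w_{v_{+}}}{j}+\tfrac{w_{v_{-}}}{\overline{j}}$ does follow from $ad-bc=1$, and the residual scalar identity does close up (after clearing the denominator $2j$, resp.\ $2\overline{j}$, the coefficients of $\zeta_{v_{\pm}}^{2}$, $(\alpha_{v_{\pm}},\zeta_{v_{\pm}})$, $(\beta_{v_{\pm}},\zeta_{v_{\pm}})$, $\alpha_{v_{\pm}}^{2}$, $\beta_{v_{\pm}}^{2}$, and $(\alpha_{v_{\pm}},\beta_{v_{\pm}})$ each vanish by $ad-bc=1$), so the cancellation you assert is not a gap, though a final write-up should display that computation rather than state it. Your route buys a proof that needs no second Poisson summation and treats all of $\operatorname{Mp}_{2}(\mathbb{Z})$ at once rather than on generators; the paper's route avoids the quadratic bookkeeping at the cost of repeating the $S$-computation.
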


\begin{proof}
For the first equality we follow the proof of Proposition \ref{perTheta}, with adding $\beta$ to each $\lambda$, where we have the additional term with $(\nu,\beta)$, and the index change from $\lambda$ to $\lambda+\sigma$ produces $(\sigma,\alpha)$ as well. Proving the second one for $T^{\pm1}$ and $S$ suffices, where for the former element the term associated with $\lambda$ is multiplied by $\mathbf{e}\big(\frac{\pm(\lambda+\beta)^{2}}{2}\big)$ because of adding $\pm1$ to $\tau$, but also by $\mathbf{e}\big(\mp\big(\lambda+\tfrac{\beta}{2},\beta\big)\big)$ because we have added $\pm\beta$ to $\alpha$. As this combines to $\mathbf{e}\big(\pm\frac{\lambda^{2}}{2}\big)$ once again, the equality for $T^{\pm}$ is proved, and as in the proof of Theorem \ref{modTheta}, we can restrict attention to the equality for $S$ with $\zeta \in L_{\mathbb{R}}$ by holomorphicity. Because the vector is now $\binom{-\beta}{\alpha}$, applying the argument leading to Equation \eqref{quadcomp} and writing $\lambda+\frac{\alpha}{2}$ as the sum of $\lambda+\alpha-\zeta$ and $\zeta-\frac{\alpha}{2}$ shows that the coefficient multiplying $\mathfrak{e}_{\gamma}$ on the left hand side is the desired exponent $\mathbf{e}\Big(\frac{\zeta_{v_{+}}^{2}}{2\tau}+\frac{\zeta_{v_{-}}^{2}}{2\overline{\tau}}\Big)$ times \[\sum_{\lambda \in L+\gamma}\mathbf{e}\bigg(-\frac{(\lambda+\alpha-\zeta)_{v_{+}}^{2}}{2\tau}-\frac{(\lambda+\alpha-\zeta)_{v_{-}}^{2}}{2\overline{\tau}}+(\lambda+\alpha-\zeta,\beta)+\big(\zeta-\tfrac{\alpha}{2},\beta\big)\bigg).\] An application of Equation \eqref{PSF}, with $\xi=\gamma-\zeta+\alpha$ and $\eta=\beta$, transforms the total expression into $\mathbf{e}\big(\frac{b_{-}-b_{+}}{8}\big)\tau^{b_{+}/2}\overline{\tau}^{b_{-}/2}\mathbf{e}\Big(\frac{\zeta_{v_{+}}^{2}}{2\tau}+\frac{\zeta_{v_{-}}^{2}}{2\overline{\tau}}\Big)\big/\sqrt{D_{L}}$ times \[\sum_{\mu \in L^{*}}\mathbf{e}\bigg(\tau\frac{(\mu+\beta)_{v_{+}}^{2}}{2}+\overline{\tau}\frac{(\mu+\beta)_{v_{-}}^{2}}{2}+(\mu,\zeta)-(\gamma,\mu)-(\alpha,\mu)+\big(\zeta-\tfrac{\alpha}{2},\beta\big)\bigg),\] and once again decomposing the sum over $L^{*}$ into the cosets of $L$ produces the required external coefficient and the terms coming from $\rho_{L}(S)$. As the summand associated with $\mu \in L+\delta$ is easily seen to be the one appearing in $\Theta_{L}\big(\tau,\zeta;\binom{\alpha}{\beta};v\big)$, this indeed gives the desired right hand side. The action of the differential operators is verified exactly as in the proof of Proposition \ref{difeq}. This proves the theorem.
\end{proof}

\smallskip

Following Lemma \ref{Fourinv} and Corollary \ref{decomtheta}, we now obtain the following result.
\begin{prop}
For every $\alpha$ and $\beta$, the map taking a real-analytic function $F:\mathcal{H}\to\mathbb{C}[D_{L(-1)}]$ to the function $(\tau,\zeta)\mapsto\big\langle\Theta_{L}\big(\tau,\zeta;\binom{\alpha}{\beta};v\big),F(\tau)\big\rangle_{L}$ is an isomorphism onto the space of smooth functions $\Phi$ on $\mathcal{H} \times L_{\mathbb{C}}$ that are holomorphic in $\zeta$ and satisfy \[\Phi(\tau,\zeta+\tau\sigma_{v_{+}}+\overline{\tau}\sigma_{v_{-}}+\nu)= \mathbf{e}\bigg(-\tau\frac{\sigma_{v_{+}}^{2}}{2}-\overline{\tau}\frac{\sigma_{v_{-}}^{2}}{2}-(\sigma,\zeta)+(\nu,\beta)+(\sigma,\alpha)\bigg)\Phi(\tau,\zeta)\] for every $\tau\in\mathcal{H}$, $\zeta \in L_{\mathbb{C}}$, and $\sigma$ and $\nu$ in $L$. \label{genper}
\end{prop}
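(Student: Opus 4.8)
The plan is to imitate the argument that established Corollary \ref{decomtheta}, now built on top of the ``shifted'' Fourier expansion dictated by the characteristic $\binom{\alpha}{\beta}$. First I would record the Fourier-theoretic core: a smooth function $\Phi$ on $\mathcal{H}\times L_{\mathbb{C}}$ that is holomorphic in $\zeta$ for fixed $\tau$ and satisfies the displayed twisted periodicity is, in particular, $L$-periodic in $\zeta$ up to the unimodular factor $\mathbf{e}((\nu,\beta))$ for $\nu\in L$. So $\zeta\mapsto\mathbf{e}((\beta,\zeta))^{-1}\Phi(\tau,\zeta)$ (more precisely, one pulls out the exponential $\mathbf{e}(-(\lambda+\beta,\zeta))$ in each term) is genuinely $L$-periodic and holomorphic, hence admits an absolutely convergent expansion indexed by $L^*$. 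Normalizing as in Lemma \ref{Fourinv}, this gives $\Phi(\tau,\zeta)=\sum_{\lambda\in L^*}f_\lambda(\tau)\mathbf{e}\big(\tau\frac{(\lambda+\beta)_{v_+}^2}{2}+\overline\tau\frac{(\lambda+\beta)_{v_-}^2}{2}+(\lambda+\beta,\zeta)\big)$ for uniquely determined smooth functions $f_\lambda$.

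Next I would use the full twisted periodicity, with $\sigma\in L$ arbitrary, exactly as in the second half of the proof of Lemma \ref{Fourinv}: substituting the expansion into both sides of the functional equation and comparing the coefficient of a fixed character (after the index shift $\mu=\lambda+\sigma$ on one side, which carries $(\lambda+\beta)_{v_\pm}^2/2+(\lambda_{v_\pm},\sigma_{v_\pm})=(\lambda+\sigma+\beta)_{v_\pm}^2/2-\sigma_{v_\pm}^2/2$ and absorbs the new $(\sigma,\alpha)$ term into the scalar) forces $f_{\lambda+\sigma}=f_\lambda$ as functions of $\tau$. Hence $f_\lambda$ depends only on the coset $\lambda+L\in D_L$, so one may write $f_\gamma$ for $\gamma\in D_L$ and set $F(\tau)=\sum_{\gamma\in D_L}f_\gamma(\tau)\mathfrak{e}_\gamma^*$, an element of $\operatorname{An}(\mathcal{H},\mathbb{C}[D_{L(-1)}])$. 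Grouping the expansion of $\Phi$ by cosets and comparing with Equation \eqref{charTheta} and the definition of $\langle\cdot,\cdot\rangle_L$ shows precisely that $\Phi(\tau,\zeta)=\big\langle\Theta_L\big(\tau,\zeta;\binom{\alpha}{\beta};v\big),F(\tau)\big\rangle_L$.

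It remains to check that the assignment $F\mapsto\Phi$ is a two-sided inverse. Injectivity is clear because the coefficients $f_\gamma$ are recovered from the (twisted) Fourier coefficients of $\Phi$, which are uniquely determined. For the fact that every $F$ does produce a $\Phi$ in the stated space, one invokes the first functional equation of Theorem \ref{propgen} together with the bilinearity of $\langle\cdot,\cdot\rangle_L$ and the $\zeta$-independence of $F$: these immediately give the twisted periodicity for $\big\langle\Theta_L\big(\tau,\zeta;\binom{\alpha}{\beta};v\big),F(\tau)\big\rangle_L$, while smoothness and holomorphicity in $\zeta$ follow from the corresponding properties of $\Theta_L$ established in (the differential-operator part of) Theorem \ref{propgen}. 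Surjectivity onto the stated space is then the content of the first two paragraphs. I expect the only mildly delicate point to be bookkeeping the exponential normalizations so that the characteristic-shifted exponent $\tau\frac{(\lambda+\beta)_{v_+}^2}{2}+\overline\tau\frac{(\lambda+\beta)_{v_-}^2}{2}+(\lambda+\beta,\zeta)$ (rather than the untwisted one) is the natural factor to extract; once that is set up, the argument is formally identical to Lemma \ref{Fourinv} and Corollary \ref{decomtheta}, and there is no real analytic obstacle since no growth condition is imposed here.
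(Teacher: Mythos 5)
Your argument is the same as the paper's: pull out an exponential so that the twisted periodicity in $\nu$ becomes genuine $L$-periodicity, expand in a Fourier series indexed by $L^{*}$, use the $\sigma$-part of the functional equation to show the coefficients depend only on the coset in $D_{L}$, and get the converse direction directly from the first functional equation of Theorem \ref{propgen} together with bilinearity. One correction to the bookkeeping you flagged as delicate: the exponential you extract must also carry the $\alpha$-dependent factor, i.e.\ you should write $\Phi(\tau,\zeta)=\sum_{\lambda\in L^{*}}f_{\lambda}(\tau)\mathbf{e}\big(\tau\tfrac{(\lambda+\beta)_{v_{+}}^{2}}{2}+\overline{\tau}\tfrac{(\lambda+\beta)_{v_{-}}^{2}}{2}+(\lambda+\beta,\zeta)-(\lambda+\tfrac{\beta}{2},\alpha)\big)$, matching Equation \eqref{charTheta}. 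With the normalization as you wrote it (no $\alpha$-term), the index shift $\mu=\lambda+\sigma$ leaves an unabsorbed $\mathbf{e}\big((\sigma,\alpha)\big)$ and yields $f_{\lambda+\sigma}=\mathbf{e}\big(-(\sigma,\alpha)\big)f_{\lambda}$, which is not coset-invariance since $(\sigma,\alpha)$ need not be an integer; only after including $-(\lambda+\tfrac{\beta}{2},\alpha)$ does the shift produce exactly $-(\lambda+\tfrac{\beta}{2},\alpha)$ again and hence $f_{\lambda+\sigma}=f_{\lambda}$, which is what lets you assemble $F=\sum_{\gamma}f_{\gamma}\mathfrak{e}_{\gamma}^{*}$ and identify $\Phi$ with the pairing against $\Theta_{L}\big(\tau,\zeta;\binom{\alpha}{\beta};v\big)$. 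Once that factor is inserted the proof is complete and coincides with the one in the paper.
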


\begin{proof}
The fact that $\Theta_{L}\big(\tau,\zeta;\binom{\alpha}{\beta};v\big)$ satisfies this functional equation, established in Theorem \ref{propgen}, implies that its pairing with any function $F$ satisfies it as well. Conversely, if $\Phi(\tau,\zeta)$ satisfies this equation then $\Phi(\tau,\zeta)\mathbf{e}\big(-(\beta,\zeta)\big)$ is invariant under translations from $L$. Thus, as in the proof of Lemma \ref{Fourinv} (but in which we now write $\tilde{f}_{\lambda}(\tau)$ as $\mathbf{e}\Big(\tau\frac{\lambda_{v_{+}}^{2}}{2}+\overline{\tau}\frac{\lambda_{v_{-}}^{2}}{2}-\big(\lambda+\frac{\beta}{2},\alpha\big)\Big)$ times $f_{\lambda}(\tau)$), we can write $\Phi(\tau,\zeta)$ as $\sum_{\lambda \in L^{*}}f_{\lambda}(\tau)\mathbf{e}\Big(\tau\frac{(\lambda+\beta)_{v_{+}}^{2}}{2}+\overline{\tau}\frac{(\lambda+\beta)_{v_{-}}^{2}}{2}+(\lambda+\beta,\zeta)-\big(\lambda+\frac{\beta}{2},\alpha\big)\Big)$. Still following the proof of Lemma \ref{Fourinv}, we fix $\sigma \in L$ and substitute this expansion of $\Phi$ into both sides for the functional equation and applying a translation of $\sigma$ between the two summation indices shows that $f_{\lambda}$ depends only on $\lambda+L$. The desired result now follows as in the proof of Corollary \ref{decomtheta}. This prove the proposition.
\end{proof}

\begin{rmk}
As in Proposition \ref{pshol}, the isomorphism from Proposition \ref{genper} takes the holomorphic functions $F:\mathcal{H}\to\mathbb{C}[D_{L(-1)}]$ onto those functions $\Phi$ that are also annihilated by the operator $4\pi i\partial_{\overline{\tau}}-\Delta_{v_{-}}^{h}$ from Proposition \ref{difeq} and Theorem \ref{propgen}. Moreover, combining the isomorphism from Proposition \ref{genper} with the map $G\mapsto\tilde{G}:\tau\mapsto\omega_{L}\big(G(-\overline{\tau})\big)$ from Corollary \ref{mapMFs} gives an isomorphism between the functions on $\mathcal{H} \times L_{\mathbb{C}}$ satisfying the condition from that proposition and real-analytic functions $G:\mathcal{H}\to\mathbb{C}[D_{L}]$. Under this isomorphism, the function $G$ is holomorphic on $\mathcal{H}$ if and only if $\Phi$ is annihilated by $4\pi i\partial_{\tau}-\Delta_{v_{+}}^{h}$, as in Proposition \ref{skewhol}. \label{skewchar}
\end{rmk}
Note, however, that modularity properties of $F$ and the pairing from Proposition \ref{genper} are no longer related, because of the action of $\operatorname{Mp}_{2}(\mathbb{Z})$ on the column vectors $\binom{\alpha}{\beta}$. The same applies to $G$ as appearing in Remark \ref{skewchar}.

\smallskip

The theta functions from Equation \eqref{charTheta} closely resemble theta functions with characteristics, as defined in, e.g., Definition 1.7 of \cite{[FZ]}, among earlier references. For this we shall define $\hat{\Theta}_{L}\big(\tau,\zeta;\binom{\alpha}{\beta};v\big)$ to be $\mathbf{e}\big(-\frac{(\alpha,\beta)}{2}\big)\Theta_{L}\big(\tau,\zeta;\binom{\alpha}{\beta};v\big)$, as with this normalization the following properties are better-behaved. To put it in a broader context, if $L$ is positive definite of rank $g$, with a Gram matrix $\mathcal{M}$ using some basis of $L$ over $\mathbb{Z}$, then the Grassmannian is trivial, and $\tau\mapsto\tau\mathcal{M}$ is an embedding of $\mathcal{H}$ into the Siegel upper half-plane of degree $g$. Then we can consider $\alpha$ and $\beta$ as the corresponding elements of $\mathbb{R}^{g}$, view $D_{L}$ as a subgroup of $L_{\mathbb{R}}/L\cong\mathbb{R}^{g}/\mathbb{Z}^{g}$ via the basis, and by lifting each $\gamma \in D_{L}$ to an appropriate element of $\mathbb{R}^{g}$, the coefficient $\hat{\theta}_{L+\gamma}\big(\tau,\zeta;\binom{\alpha}{\beta}\big)$ is the theta function $\theta\big[{2\beta+2\gamma \atop -2\alpha}\big](\mathcal{M}\zeta,\tau\mathcal{M})$ in the notation of \cite{[FZ]}. Hence our functions $\hat{\theta}_{L+\gamma}\big(\tau,\zeta;\binom{\alpha}{\beta},v\big)$ are a generalization of this restriction of theta functions with characteristics to the indefinite case.

Indeed, the usual arguments show that the theta function $\hat{\Theta}_{L}\big(\tau,\zeta;\binom{\alpha}{\beta};v\big)$ satisfies the following properties of theta functions with characteristics: The parity formula $\hat{\Theta}_{L}\big(\tau,\zeta;\binom{\alpha}{\beta};v\big)=\hat{\Theta}_{L}\big(\tau,-\zeta;\binom{-\alpha}{-\beta};v\big)$, the periodicity relation
\begin{equation}
\hat{\Theta}_{L}\big(\tau,\zeta;\textstyle{\binom{\alpha+\nu}{\beta+\sigma}};v\big)=\displaystyle\mathbf{e}\big(-(\beta,\nu)\big)\hat{\Theta}_{L}\big(\tau,\zeta;\textstyle{\binom{\alpha}{\beta}};v\big)\qquad\mathrm{for\ }\nu\mathrm{\ and\ }\sigma\mathrm{\ in\ }L \label{perofchar}
\end{equation}
(by a summation index change), and the general relation \[\hat{\Theta}_{L}(\tau,\!\zeta;\!\textstyle{\binom{\alpha+\nu}{\beta+\sigma}};\!v)\!=\!\displaystyle\mathbf{e}\bigg(\!\tau\frac{\sigma_{v_{+}}^{2}}{2}+\overline{\tau}\frac{\sigma_{v_{-}}^{2}}{2}+ (\sigma,\zeta-\alpha-\nu)\!\bigg)\hat{\Theta}_{L}(\tau,\!\zeta+\tau\sigma_{v_{+}}+\overline{\tau}\sigma_{v_{-}}-\nu;\!\textstyle{\binom{\alpha}{\beta}};v)\] with respect to changing the variable, in which $\sigma$ and $\nu$ are arbitrary in $L_{\mathbb{R}}$ (the proof is similar to that of the first equality in Theorem \ref{propgen}). In particular it follows from Equation \eqref{perofchar} that if $\beta \in L^{*}$ then the theta function is defined with characteristics coming from cosets modulo $L$. Then, for $\alpha$ and $\beta$ in $D_{L}$ the function $\hat{\theta}_{L+\gamma}\big(\tau,\zeta;\binom{\alpha}{\beta};v\big)$ is just $\mathbf{e}\big(-(\alpha,\beta+\gamma)\big)$ times the initial component $\theta_{L+\beta+\gamma}(\tau,\zeta;v)$ of Equation \eqref{JacTheta}. This shows, in fact, that in this setting the vector-valued nature of $\hat{\Theta}_{L}$ contains no more information than any single scalar-valued component.

\smallskip

While the lack of modularity of the functions from Proposition \ref{genper} makes their applicability less evident, we can present the following application, which complements Proposition \ref{uparrow}.
\begin{prop}
Assume that $L$ is an over-lattice of another lattice $\Lambda$, and set $H=L/\Lambda \subseteq D_{\Lambda}$. Take $\tau\in\mathcal{H}$, $v$ in the common Grassmannian of $L_{\mathbb{R}}$ and $\Lambda_{\mathbb{R}}$, and $\zeta \in L_{\mathbb{C}}=\Lambda_{\mathbb{C}}$. Then $\uparrow^{L}_{\Lambda}\Theta_{L}(\tau,\zeta;v)$ equals $\frac{1}{|H|}\sum_{\alpha \in H}\sum_{\beta \in H}\hat{\Theta}_{\Lambda}\big(\tau,\zeta;\binom{\alpha}{\beta};v\big)$. Moreover, if $F$ is a modular form of weight $(k,l)$ and representation $\rho_{\Lambda}^{*}$ with respect to $\Gamma\subseteq\operatorname{Mp}_{2}(\mathbb{Z})$, then we have the equality \[\Phi_{L,v}^{\downarrow^{L(-1)}_{\Lambda(-1)}F}(\tau,\zeta)=\frac{1}{|H|}\sum_{\alpha \in H}\sum_{\beta \in H}\big\langle\hat{\Theta}_{\Lambda}\big(\tau,\zeta;\textstyle{\binom{\alpha}{\beta}};v\big),F(\tau)\big\rangle_{\Lambda}.\] \label{downarrow}
\end{prop}

\begin{proof}
The definition of $\uparrow$ and the fact that $\Theta_{L}(\tau,\zeta;v)=\downarrow^{L}_{\Lambda}\Theta_{L}(\tau,\zeta;v)$ implies that for $\gamma \in D_{\Lambda}$, the coefficient of $\mathfrak{e}_{\gamma}$ in $\uparrow^{L}_{\Lambda}\Theta_{L}(\tau,\zeta;v)$ is $\sum_{\beta \in H}\theta_{\Lambda+\gamma+\beta}(\tau,\zeta;v)$ when $\gamma \in H^{\perp}$ and 0 otherwise. Moreover, as $\sum_{\alpha \in H}\mathbf{e}\big(-(\alpha,\gamma)\big)$ equals $|H|$ when $\gamma \in H^{\perp}$ and 0 otherwise, and $H$ is isotropic, we can express our coefficient as $\frac{1}{|H|}\sum_{\alpha \in H}\sum_{\beta \in H}\mathbf{e}\big(-(\alpha,\beta+\gamma)\big)\theta_{\Lambda+\gamma+\beta}(\tau,\zeta;v)$. The fact that the summand associated with $\alpha$ and $\beta$ was seen above to be $\hat{\theta}_{L+\gamma}\big(\tau,\zeta;\binom{\alpha}{\beta};v\big)$ thus establishes the first assertion. For the second one, we note that the definition of the left hand side in Theorem \ref{main} combines with Lemma 2.2 of \cite{[Ze3]} to express the left hand side as $\Phi_{L,v}^{F}(\tau,\zeta):=\big\langle\uparrow^{L}_{\Lambda}\Theta_{L}(\tau,\zeta;v),F(\tau)\big\rangle_{L}$, and substituting the expression from the first assertion produces the desired right hand side. This proves the proposition.
\end{proof}
As for Propositions \ref{dirsum} and \ref{Jaccont} and Theorem \ref{MFprodJac}, combining Proposition \ref{downarrow} with Proposition \ref{skewhol} and the commutation relation $\omega_{L}\circ\downarrow^{L}_{\Lambda}=\downarrow^{L(-1)}_{\Lambda(-1)}\circ\omega_{\Lambda}$ also yields, for a modular form $G$ of weight $(k,l)$ and representation $\rho_{\Lambda}$ with respect to the group $\tilde{\Gamma}$ from Corollary \ref{mapMFs}, the equality \[\tilde{\Phi}_{L,v}^{\downarrow^{L}_{\Lambda}G}(\tau,\zeta)=\frac{1}{|H|}\sum_{\alpha \in H}\sum_{\beta \in H}\big\langle\hat{\Theta}_{\Lambda}\big(\tau,\zeta;\textstyle{\binom{\alpha}{\beta}};v\big),\omega_{\Lambda}\big(G(-\overline{\tau})\big)\big\rangle_{\Lambda}.\]

\noindent\textsc{Einstein Institute of Mathematics, the Hebrew University of Jerusalem, Edmund Safra Campus, Jerusalem 91904, Israel}

\noindent E-mail address: zemels@math.huji.ac.il

\end{document}